\newcommand\Z{\mathbb Z}
\newcommand{\ds}{\displaystyle}
\newcommand{\pp}{projection principle}
\newcommand\bp{B}
\newcommand{\vac} {\ensuremath{\circ}}
\newcommand{\occ} {\ensuremath{\bullet}}
\def\Prob{{\mathbb {P}}}
\DeclareMathOperator{\ssyt}{SSYT}
\DeclareMathOperator{\ssy}{SS}
\renewcommand\P{\operatorname{\mathbb P{}}}
\newcommand{\om} {\Omega}
\newcommand{\ofm} {\Omega^{FM}}
\newcommand{\be}{\begin{equation}}
\newcommand{\ee}{\end{equation}}
\newcommand{\refT}[1]{Theorem~\ref{#1}}
\newcommand{\refL}[1]{Lemma~\ref{#1}}
\newcommand{\refP}[1]{Proposition~\ref{#1}}
\newcommand{\refS}[1]{Section~\ref{#1}}
\newenvironment{proofof}[1]{\medskip\noindent
   \textit{Proof of #1.} }{\hfill $\qed$\par\medskip}
\newenvironment{romenumerate}[1][0pt]{% optional argument changes indentation
\addtolength{\leftmargini}{#1}\begin{enumerate}% gives (i), (ii) etc.
 }{\end{enumerate}}
\newtheorem{theorem}{Theorem}[section]
\newtheorem{lemma}[theorem]{Lemma}
\newtheorem{proposition}[theorem]{Proposition}
\newtheorem{corollary}[theorem]{Corollary}
\newtheorem{remark}[theorem]{Remark}
\newtheorem{problem}[theorem]{Problem}
\newtheorem{conjecture}[theorem]{Conjecture}
\begin{document}

\title
{Correlations in the Multispecies TASEP and a Conjecture by Lam}

\author{Arvind Ayyer} 
\address{Arvind Ayyer\\
Department of Mathematics, Indian Institute of Science,\\ Bangalore - 560012, India}
\email{arvind@math.iisc.ernet.in}

\author{Svante Linusson} 
\address{Svante Linusson, Department of Mathematics, KTH-Royal Institute of Technology, 
  SE-100 44, Stockholm, Sweden.}
\email{linusson@math.kth.se}

\date{\today}  % (typeset \today{} \klockan)} %; revised ...

\begin{abstract}
We study correlations in the multispecies TASEP on a ring.
Results on correlation of two adjacent points prove two conjectures by Thomas Lam on \\
(a) the limiting direction of a reduced random walk in $\tilde A_{n-1}$ and\\ 
(b) the asymptotic shape of a random integer partition with no hooks of length $n$, a so called $n$-core.

We further investigate two-point correlations far apart and three-point nearest neighbour correlations and prove explicit formulas in almost 
all cases. These results can be seen as a finite strengthening of correlations in the TASEP speed process by Amir, Angel and Valk\'o. 
We also give conjectures for certain higher order nearest neighbour correlations.
We find an unexplained independence property (provably for two points, conjecturally for more points) between points that are closer in position 
than in  value that deserves more study.
\end{abstract}

\maketitle
\section{Introduction} 
\label{S:Intro}
Probabilistic processes on combinatorial structures are of much current interest. 
Thomas Lam initiated the study of infinite random reduced words in affine Weyl groups in \cite{lam}. This can equivalently be formulated 
as a random walk on the affine Coxeter arrangement conditioned never to cross the same hyperplane twice.
He first states a remarkable formula for the limiting direction of the random walk on the Weyl alcoves in terms of a certain finite 
Markov chain on the underlying finite Weyl group, see \refT{T:Lam2} below.
He further specializes to the affine Weyl group $\tilde{A}_{n-1}$ and conjectures a closed formula for the limiting direction. 
Via known bijections (see, for example \cite{llms}) between affine Grassmannians and so-called $n$-cores, a special class of integer partitions, he also conjectures a limit shape  for the natural growth process of these partitions, see Theorem \ref{T:cores}. 
This limit shape is a natural finite version of the famous limit shape by Rost, \cite{Rost, Joh}.
The first purpose of this paper is to prove these two conjectures.

Lam states his conjectures in terms of a certain Markov chain on the set of permutations. This chain turns out to be equivalent, see 
\cite{AL}, to a Markov chain, which is a  multispecies 
variant of the Totally Asymmetric Simple Exclusion Process (or TASEP) on a ring. 
Unknown to Lam, that Markov chain was already studied by probabilists and statistical physicists 
and the stationary distribution was given an explicit interpretation in terms of queueing theory by Ferrari and Martin \cite{FM1,FM2}. 

We will prove Lam's conjectures by studying certain two-point correlations in the multispecies TASEP on a ring. 
In another direction, it turns out that Amir, Angel and Valk\'o
\cite{AAV} have studied the correlations in an infinite volume limit of the multispecies TASEP, called the {\em TASEP speed process}.
Motivated by the study of correlations in the three-species exclusion process on $\Z$ \cite{MG,FGM} they gave exact formulas for various marginals in the TASEP speed process. In particular, they studied two-point and three-point nearest-neighbour correlations in great detail as well as two-point correlations a distance apart. 

This naturally leads us to the second purpose of this paper, namely to further explore correlations in the multispecies TASEP.
Apart from the two-point correlations above, we find a closed formula, \refT{T:dinY} for the correlation of two points further apart. 
In important special cases, the formula simplifies to extremely simple expressions. We also give closed formulas for correlation of three adjacent particles in five of the six possible cases. The correlation formulas shows a remarkable independence property, provably for two particles and conjecturally for more particles, see Remark \ref{R:indep}, Corollary \ref{C:uniform} and the discussion in \refS{S:open}. Our proofs are of a somewhat intricate combinatorial nature relying on the
multiline queues of Ferrari-Martin. We would be very interested to see a more conceptual proof that could give some understanding and hopefully prove some of our conjectures. In a certain sense, several of the formulas by Amir, Angel and Valk\'o \cite{AAV} are limits of our formulas on correlations, which thus are finite strengthenings of theirs.

The structure of the paper is as follows. Section~\ref{S:back} contains background information: Section~\ref{S:TASEP} defines the multispecies TASEP and the multiline process, Section~\ref{S:redwords} defines infinite reduced words in $\tilde A_n$ and Section~\ref{S:ncore} defines $n$-cores. 
In Section \ref{S:LimitTheorems} we present the  conjectures by Lam about the limiting objects, first the limiting direction of the random walk in \refS{S:LimitDirection} and the limiting shape of $n$-cores in Section~\ref{S:LimitShape}.
We also prove there that Lam's conjectures follows from our computations about the nearest neighbour two-point correlations in Section \ref{S:Two adjacent}, where we also explain the connection to the TASEP speed process. 
We continue our study of two-point correlations far apart in Section~\ref{S:Two far}. Proofs of the formulas there will require enumerative formulas for certain constrained semistandard Young tableaux with two columns, which we relegate to Section~\ref{S:Y}. We finally study nearest neighbour three-point correlations in Section~\ref{S:threept}. We end with some conjectures and open problems in Section~\ref{S:open} deserving of future study.

\section*{Acknowledgements}
We thank the MSRI for hospitality, where this research was initiated.
We thank Thomas Lam, James Martin, Greta Panova, Anne Schilling and Lauren Williams for fruitful discussions.

\section{Background}
\label{S:back}

\subsection{Multispecies TASEP and Multiline queues}
\label{S:TASEP}

In this section we describe the multispecies TASEP  and the multiline queue process defined by Ferrari-Martin \cite{FM1} that we will use. We will borrow notation from \cite{AL}, but we will describe the homogeneous model that we are interested in rather than the more general inhomogeneous model. The interested reader should consult \cite{AL} for more details. 

The multispecies version of the totally asymmetric simple exclusion process (or TASEP for short) is a stochastic process which can be defined on an arbitrary directed graph, but we will only define it on directed path graphs of length $N$. The physical model is as follows. There are $N$ locations arranged in the shape of a ring, and each location is occupied by a particle. There are $n$ species of particles, labelled 1 through $n$ and $m_i$ particles of species $i$.
A multispecies TASEP model is thus determined by the $n$-tuple 
$m \equiv (m_1,\dots,m_n)$ such that $m_1+\cdots +m_n = N$. The space of configurations will be denoted $\Omega_m$.
It is easy to see that $|\Omega_m| = \ds \binom N{m_1,\cdots,m_n}$, and can be described naturally by multipermutations $S_m$, with $m_i$ repetitions of $i$, for $i \in [n]$. If each $m_i=1$, then the state space becomes that of ordinary permutations $S_n$.

Each particle carries an independent exponential clock which rings with rate 1. Whenever the clock for particle $j$ rings, 
it tries to exchange positions with the particle $k$ to its immediate left. The exchange is only successful if $k>j$ and fails otherwise. 
This Markov process is completely described by its generator, which is an $|\Omega_m| \times |\Omega_m|$ matrix $M_m$ labelled by multipermutations whose off-diagonal $(\sigma,\tau)$-entry describe the transition rate from $\tau \to \sigma$,
\[
M_m(\sigma,\tau) = 
1, \quad \text{if $\sigma_i = \tau_{i+1}> \tau_i = \sigma_{i+1}$ 
and $\sigma_j = \tau_j$ for $j \neq i,i+1$},
\]
for $\sigma \neq \tau$ and whose diagonal entries are negatives of the total
incoming transitions,
\[
M_m(\sigma,\sigma) = -\#\{\rho \in S_m, \rho \neq \sigma : M_m(\rho,\sigma) = 1\}.
\]
Let $\P_m(\sigma)$ denote the stationary probability of the multipermutation $\sigma$ of type $m$.
The following facts about the stationary probability distribution of the multispecies TASEP are well-known.
\begin{proposition}
\label{P:TASEPbasic}
\begin{romenumerate}
\item Let $\sigma' = (\sigma_2,\dots,\sigma_N,\sigma_1)$ denote the rotated version of $\sigma$. Then $\P_m(\sigma') = \P_m(\sigma)$.

\item Let $m^{\text{rev}} = (m_n,m_{n-1},\dots,m_1)$ and 
$\sigma^{\text{rev}} = (n+1-\sigma_N,\dots,n+1-\sigma_1)$. Then $\sigma^{\text{rev}} \in S_{m^{\text{rev}}}$ and 
$\P_{m^{\text{rev}}}(\sigma^{\text{rev}}) = \P_m(\sigma)$.

\item The stationary probability that the first site is occupied by species $i$ is given by $\P_m(w_1 = i) = m_i/N$.
\end{romenumerate}
\end{proposition}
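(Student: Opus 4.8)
The plan is to establish the three parts of Proposition~\ref{P:TASEPbasic} using the symmetries of the generator $M_m$ together with uniqueness of the stationary distribution. The process is an irreducible finite-state continuous-time Markov chain (irreducibility follows because any multipermutation can be reached from any other by a sequence of allowed adjacent swaps on the ring), so its stationary distribution $\P_m$ is the unique (up to scaling) left null vector of $M_m$, and any probability vector that is invariant under the dynamics must equal it. So the strategy for (i) and (ii) is: exhibit a bijection $\phi$ on $\Omega_m$, check that it is an automorphism of the generator in the appropriate sense, and conclude that $\P_m \circ \phi^{-1}$ is also stationary, hence equals $\P_m$.

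For part (i), I would take $\phi$ to be cyclic rotation $\sigma \mapsto \sigma'$. Since the ring has no distinguished site, the transition rule ``particle at site $i+1$ tries to jump left into site $i$'' is manifestly invariant under relabelling sites cyclically; concretely one checks $M_m(\phi\sigma, \phi\tau) = M_m(\sigma,\tau)$ directly from the displayed formula for $M_m$, where site indices are read modulo $N$ (the swap at the ``seam'' between site $N$ and site $1$ is a genuine transition of the model). Then $\pi := \P_m \circ \phi^{-1}$ satisfies $\pi M_m = 0$ and is a probability vector, so $\pi = \P_m$, i.e.\ $\P_m(\sigma') = \P_m(\sigma)$. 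For part (ii), I would take $\psi\colon S_m \to S_{m^{\mathrm{rev}}}$, $\sigma \mapsto \sigma^{\mathrm{rev}}$, which reverses the order of the sites around the ring and replaces each species label $j$ by $n+1-j$. Reversing orientation turns ``left'' into ``right'' and complementing the labels turns the inequality $k>j$ into $n+1-k < n+1-j$, so an allowed transition of the $m$-model maps to an allowed transition of the $m^{\mathrm{rev}}$-model; one verifies $M_{m^{\mathrm{rev}}}(\psi\sigma,\psi\tau) = M_m(\sigma,\tau)$ from the formula, so $\P_m \circ \psi^{-1}$ is stationary for $M_{m^{\mathrm{rev}}}$ and equals $\P_{m^{\mathrm{rev}}}$. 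This gives $\P_{m^{\mathrm{rev}}}(\sigma^{\mathrm{rev}}) = \P_m(\sigma)$; that $\sigma^{\mathrm{rev}} \in S_{m^{\mathrm{rev}}}$ is immediate from counting how many coordinates equal each value.

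For part (iii), I would argue as follows: by part (i), $\P_m(w_1 = i) = \P_m(w_j = i)$ for every site $j \in [N]$, since the event $\{w_j = i\}$ is the image under $j-1$ rotations of the event $\{w_1 = i\}$ and rotation preserves $\P_m$. Summing over $j$, $\sum_{j=1}^N \P_m(w_j = i) = N\,\P_m(w_1 = i)$. On the other hand this sum is $\E_m[\#\{j : w_j = i\}] = m_i$, because \emph{every} configuration in $\Omega_m$ has exactly $m_i$ coordinates equal to $i$ by definition of $S_m$. Hence $\P_m(w_1 = i) = m_i/N$.

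The main obstacle, such as it is, is purely bookkeeping: being careful with the modular site-indexing on the ring so that the ``seam'' transition between site $N$ and site $1$ is correctly included and the generator formula is read cyclically, and being careful with the two simultaneous involutions in part (ii) (orientation reversal \emph{and} label complementation) so that the direction of the allowed-transition inequality comes out right. None of the three parts requires any real computation beyond matching the two sides of the generator formula term by term; the conceptual content is just ``symmetry of the dynamics $+$ uniqueness of the stationary measure,'' and for (iii) the extra observation that the species counts are deterministic.
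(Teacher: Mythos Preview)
Your proposal is correct and matches the paper's approach: the paper simply remarks that (i) holds because the dynamics is position-independent, that (ii) is particle--hole symmetry with the direction switched, and that (iii) follows from (i). Your write-up just fills in the details of these assertions (automorphism of the generator plus uniqueness of the stationary measure for (i) and (ii), and the averaging argument for (iii)), so there is nothing to compare.
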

The first property in Proposition~\ref{P:TASEPbasic} is called {\em rotational symmetry}, and follows simply because the Markovian dynamics is independent of the position. The second is a generalized version of what is sometimes called {\em particle-hole symmetry} although notice that the direction has also been switched. The last property follows from the first.

The following amazing result about the stationary probability distribution was proved by Ferrari and Martin.

\begin{theorem}[P. Ferrari and J. Martin, \cite{FM2}, Theorem 5.1] \label{T:tasepprob}
The stationary probability of multipermutations in $S_m$ are integer multiples of
\[
\left(\ds\prod_{i=1}^{n} \binom N{m_1+ \cdots + m_i}\right)^{-1}
\]
with the integer being 1 for the reverse permutation 
\[
\underbrace{n \cdots n}_{m_n} \underbrace{(n-1) \cdots (n-1)}_{m_{n-1}}  \dots
\underbrace{1 \cdots 1}_{m_1}.
\]
\end{theorem}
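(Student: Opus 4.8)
The plan is to derive this from the Ferrari--Martin multiline queue construction, which the paper introduces in this same section. Recall that a \emph{multiline queue} of type $m$ is an arrangement of particles ($\bullet$) and holes ($\circ$) on $n$ cyclically ordered rows of $N$ sites, in which the $k$-th row from the top carries exactly $m_n+m_{n-1}+\dots+m_{n-k+1}$ particles; in particular the bottom row is full. The \emph{bully-path projection} $\pi$ reads off a multipermutation: the $m_n$ particles of the top row get label $n$; then, descending row by row and within each row treating the already-placed labels in decreasing order, every labelled particle is matched to the nearest so-far-unmatched particle weakly to its right (cyclically) in the row below, which inherits the label, after which the $m_{n-k}$ still-unmatched particles of the $(k+1)$-th row receive the fresh label $n-k$. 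The labels of the full bottom row spell out $\pi(Q)\in S_m$. The substantial input I would quote from \cite{FM1,FM2} is that one can equip the multiline queues of type $m$ with a Markov chain which is irreducible, preserves the uniform measure, and is lumpable under $\pi$ onto the multispecies TASEP $M_m$; consequently the stationary distribution of $M_m$ is the pushforward $\pi_*$ of the uniform measure on multiline queues.

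Granting that, the rest is bookkeeping. First I would count multiline queues: the occupied sites in each row may be chosen independently subject only to the prescribed counts, so
\[
\#\{\text{multiline queues of type }m\}=\prod_{k=1}^{n}\binom{N}{m_n+\dots+m_{n-k+1}}=\prod_{i=1}^{n}\binom N{m_1+\dots+m_i},
\]
the second equality using $\binom{N}{m_n+\dots+m_{n-k+1}}=\binom{N}{m_1+\dots+m_{n-k}}$ and a reindexing (the $i=n$ factor being $\binom NN=1$). Since $\P_m(\sigma)=|\pi^{-1}(\sigma)|\big/\#\{\text{multiline queues of type }m\}$, each stationary probability is the integer $|\pi^{-1}(\sigma)|$ times the displayed reciprocal, which is the first assertion.

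For the second assertion I would show $|\pi^{-1}(w_0)|=1$ for the reverse word $w_0=n^{m_n}(n-1)^{m_{n-1}}\cdots 1^{m_1}$. The key elementary observation is that bully paths matching \emph{into} a completely full row send every particle to its own position; hence in the ``staircase'' queue, where the $k$-th row occupies sites $1,\dots,m_n+\dots+m_{n-k+1}$, all labels descend straight down and the bottom row emerges sorted decreasingly, so this queue maps to $w_0$. Uniqueness comes from running the same observation upward by induction: since the bottom row is full, the label-$j$ positions in it coincide with those of row $n-1$ for $j\ge 2$ while the label-$1$ positions are their complement, so $\pi(Q)=w_0$ forces row $n-1$ to be exactly the left-justified staircase row, and iterating up the array pins down $Q$ completely.

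The one genuine obstacle is the cited input: producing the multiline-queue dynamics (morally, prepend a fresh random top row and re-run the bully paths) and verifying both that it projects, as a Markov chain, onto $M_m$ and that it leaves the uniform measure invariant. This is the combinatorial core of Ferrari and Martin's work; once it is in hand, the counting and the reverse-word computation above are routine, and a self-contained treatment would most plausibly establish the projection property first and then read the invariance of the uniform measure off a suitable symmetry of the multiline dynamics.
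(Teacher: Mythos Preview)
Your proposal is correct and follows the same route the paper itself describes: this theorem is cited from Ferrari--Martin rather than proved afresh, and the paper's discussion after the statement (Theorems~\ref{T:mlqprob} and~\ref{T:bullypath} and the ensuing Corollary) amounts to exactly your outline---uniform stationary measure on multiline queues, lumping via the bully-path projection, and then reading off $\P_m(\sigma)=|\pi^{-1}(\sigma)|/|\Omega^{FM}_m|$.

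One point of friction worth flagging: your multiline-queue convention is the reverse of the paper's. The paper uses $n-1$ rows with the $k$-th row carrying $M_k=m_1+\cdots+m_k$ particles and bully paths starting from the \emph{lowest} class at the top; you use $n$ rows with a full bottom row and the \emph{highest} class at the top. Both conventions appear in the literature and are equivalent (related essentially by the particle--hole symmetry of Proposition~\ref{P:TASEPbasic}(ii)), so your counting and your staircase/uniqueness argument for $w_0$ go through, but if you are writing this up alongside the paper you should either switch to the paper's convention or say explicitly that yours is the dual one. In the paper's convention the unique preimage of $w_0$ is the right-justified staircase (row $k$ occupied in the last $M_k$ positions), and the uniqueness induction runs from the top down rather than from the bottom up.
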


\refT{T:tasepprob} was proved by Ferrari and Martin by considering a larger Markov chain and projecting to the multispecies TASEP Markov chain by the procedure known as {\em lumping}. We now describe this larger model, which is called a {\em multiline queue}. Just as the multispecies TASEP is defined on a ring of circumference $N$, the multiline queue is defined on a discrete cylinder of circumference $N$ and height $n-1$, which can be thought of as a stack of $n-1$ such rings containing a total of $(n-1)N$ sites. Each site contains exactly one of two symbols; $\vac$ and $\occ$ called vacant and occupied.
Given an $n$-tuple $m \equiv (m_1,\dots,m_n)$ of positive integers summing to $N$, the set of multiline queues of type $m$, denoted $\ofm_m$,
includes all configurations of $\vac$'s and $\occ$'s such that the number of $\occ$'s at row $i$, enumerated from the top, 
is $M_i = \ds\sum_{j=1}^i m_j$.
Since the positions of $\occ$'s at each row are independent of one another, the total number of configurations is
\[
|\ofm_m| = \ds\prod_{i=1}^{n-1} \binom N{M_i},
\]
which is the same as the denominator in \refT{T:tasepprob} since $M_N = N$.

Ferrari-Martin \cite{FM2}, defined transitions between multiline queues that turned $\ofm_m$ into a Markov chain which was called 
the (inhomogeneous)
Ferrari-Martin multiline process \cite{AL}.

\begin{theorem}[P. Ferrari and J. Martin, \cite{FM2}, Theorem 3.1] 
\label{T:mlqprob}
The stationary distribution of the Ferrari-Martin multiline process is uniform on $\ofm_m$.
\end{theorem}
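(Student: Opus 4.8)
The plan is to establish two facts and combine them: the Ferrari--Martin multiline process is irreducible on $\ofm_m$, and the uniform measure on $\ofm_m$ is stationary; an irreducible finite Markov chain has a unique stationary distribution, so the uniform measure is then that distribution. Irreducibility I would treat as routine and make precise by induction on the number of rows: the top row on its own evolves as an irreducible single-species exclusion process on a cyclic lattice of $N$ sites with $M_1$ occupied sites, and once the rows above it are driven to a reference state the cascade rule still lets each lower row be moved through all of its configurations, so from any multiline queue one can reach any other.

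The substantive point is stationarity of the uniform measure $\pi$. Since every transition has rate $1$, and since the update triggered by the clock at an adjacent pair of columns $k=(j,j+1)$ is a deterministic map $\tau_k\colon\ofm_m\to\ofm_m$ (a no-op counting as no transition), $\pi$ is stationary exactly when, for every $\xi\in\ofm_m$,
\[
\sum_{k}\bigl|\tau_k^{-1}(\xi)\setminus\{\xi\}\bigr|\;=\;\sum_{k}\mathbbm{1}\bigl[\tau_k(\xi)\neq\xi\bigr];
\]
that is, the total rate of transitions into $\xi$ equals the total rate out of $\xi$. The model case is a single row ($n-1=1$): there $\tau_k$ is the exclusion swap at $k$, and the two sides count the cyclic occurrences of the patterns $\vac\occ$ and $\occ\vac$ read around the row, which are equal because a cyclic binary word has as many ascents as descents. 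For $n-1>1$ one must carry this bookkeeping through the cascade: firing clock $k$ performs the exclusion swap in the top row and then, in each lower row, a swap at $(j,j+1)$ whose occurrence is prescribed by the Ferrari--Martin rule relating that row to the one above. Because $\tau_k$ only alters the contents of columns $j$ and $j+1$ in the $n-1$ rows, the whole analysis is local to a $2\times(n-1)$ window, and I would build the required bijection between the transitions entering $\xi$ and the clocks moving $\xi$ by refining the ascent/descent matching one row at a time.

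The hard part is exactly this last step: describing the fibres $\tau_k^{-1}(\xi)$ once the downward cascade is switched on, i.e.\ recognising precisely which configurations the cascade can turn into $\xi$. An attractive alternative, which I would pursue in parallel, exploits that the cascade propagates only downward, so that the process restricted to its top $r$ rows is again a multiline process, now of type $(m_1,\dots,m_r,\,N-M_r)$. One can then induct on the number of rows, assuming the top $n-2$ rows are uniform and independent in the stationary state and showing that, conditionally on their evolution, the bottom row undergoes a cascade-driven exclusion-type dynamics whose stationary conditional law is uniform and independent of the rows above --- again a conditional form of the ascent/descent count --- with base case $r=1$ the classical statement for single-species TASEP on a ring.
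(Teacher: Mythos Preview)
The paper does not prove this theorem at all: it is quoted as Theorem~3.1 of Ferrari--Martin \cite{FM2}, and the surrounding text explicitly declines even to define the transitions of the multiline process (``We do not need the precise definition of the transitions in the Ferrari-Martin multiline queues. The interested reader is referred to \cite{FM1}.''). There is therefore no in-paper proof to compare your proposal against; the result is simply imported as background.

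As for your sketch on its own terms: the overall architecture (irreducibility plus stationarity of the uniform measure, with the latter checked via a local in-equals-out balance and an induction on the number of rows) is indeed the shape of the argument in \cite{FM2}. But note that your write-up presupposes a specific form of the dynamics---a clock on each adjacent column pair $(j,j+1)$ inducing a deterministic cascade map $\tau_k$---which the present paper never states, so from this paper alone your description is not grounded. More substantively, you yourself flag the gap: you do not actually carry out the fibre analysis of $\tau_k^{-1}(\xi)$ through the cascade, nor the conditional-uniformity step in the inductive alternative. Both of these are where the real work in \cite{FM2} lies, and your proposal stops at ``I would build the required bijection'' and ``I would pursue in parallel.'' So what you have is an outline of the Ferrari--Martin strategy rather than a proof; to complete it you would need to consult \cite{FM2} for the precise transition rule and then execute the row-by-row bookkeeping you describe.
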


We do not need the precise definition of the transitions in the Ferrari-Martin multiline queues. The interested reader is referred to \cite{FM1}.  
See \cite{AL} for illustrative examples of both the multispecies TASEP and the multiline queueing Markov chains.
Several different set of transitions rules that also give uniform stationary distribution on $\ofm_m$ are given in \cite{LM}.

The connection between the multiline queues and the TASEP is expressed through the procedure formally known as lumping of Markov chains \cite[Lemma 2.5]{LPW}. 
We will now describe the procedure called {\em bully path projection} which relates the stationary distribution of the multispecies TASEP to the uniform
distribution on multiline queues. As expected from a projection procedure, this will be a surjective map $\bp: \ofm_m \to \Omega_m$.

Let $C \in \ofm_m$. The projection is defined recursively by bully paths.  A bully path
is a path going through the multiline queue which always moves rightwards or downwards and which contains exactly one $\occ$ from each row. Moreover, it moves downwards from a given row $j$ if and only if 
it has encountered a $\occ$ in row $j$ that has not already been part of another bully path.

We start by defining bully paths starting at locations $(1,i)$ (where we are using matrix notation for positions in multiline queues) such that $C_{1,i}=\occ$. The order in which we run these paths among these $\occ$'s turns out not to matter.
We mark by a 1 all the $m_{1}$ columns where the bully paths end at the bottom row. 
Next, we start bully paths at locations $(2,j)$ such that $C_{2,j} = \occ$ and
moreover which were not part of the bully paths from the first row. By the definition of multiline queues and the nature of the bully paths, there will be exactly $m_2$ of these.
We then mark all the $m_2$ columns in the bottom row that these paths reach by a 2 at row $n$. We continue in this way for all the $\occ$'s in all the rows. 
There will be exactly $m_j$ bully paths starting in row $j$ leading to a $j$ on row $n$ for $j \in [n-1]$. Finally, we mark the symbol $n$ on row $n$ below all
$k$'s such that the $C_{n-1,k} = \vac$.

At the end, the configuration on row $n$ of entries from $\{1,\dots,n\}$ is
precisely a multipermutation in $S_m$. This is the required configuration in $\om_{m}$. We call this projection $\bp$. We remark that $\bp$ is well-defined in the sense that the order of the bully paths starting at a given row do not matter.  See Figure \ref{F:bullypatheg} for an example.
The following theorem essentially states that the bully path projection is a lumping of Markov chains.

\begin{figure}[h!]
%\begin{center}
\begin{tikzpicture}[scale=1]
\matrix [column sep=0.5cm, row sep = 0.1cm, ampersand replacement=\&] 
{
\node {$\vac$}; \& \node{$\vac$}; \& \node(a1) {$\occ_{1}$}; \& \node {$\vac$}; \& \node {$\vac$}; \& \node {$\vac$}; \& 
\node(a21) {$\occ_1$}; \& \node {$\vac$};  \& \node {$\vac$}; \\
\node {$\vac$}; \& \node {$\vac$}; \& \node(a2) {$\vac$}; \& \node {$\vac$}; \& \node {$\vac$}; \& \node(a3) {$\occ_1$}; \& 
\node(a22) {$\occ_1$}; \& \node(b1) {$\occ_2$}; \& \node {$\vac$}; \\
\node(b3) {$\occ_2$}; \& \node(c1){$\occ_3$}; \& \node {$\vac$}; \& \node(c21) {$\occ_3$};\& \node {$\vac$}; \& \node {$\occ_1$};\& 
\node(a23) {$\vac$}; \& \node(b2) {$\vac$}; \& \node(a24) {$\occ_1$};\\ 
\node(a26) {$\occ_1$}; \& \node(b4){$\occ_2$}; \& \node {$\vac$}; \& \node(c3) {$\occ_3$}; \&\node(c23) {$\occ_3$}; \&\node {$\occ_1$};\& 
\node(d1) {$\occ_4$}; \& \node(d21) {$\occ_4$}; \& \node(a25) {$\vac$};\\
\node(a27) {$\occ_1$}; \& \node(b5){$\occ_2$}; \& \node {$\occ_{5}$}; \& \node(c4) {$\occ_3$}; \& \node(c24) {$\occ_3$};\&\node(a4) {$\occ_1$};\& 
\node(d2) {$\occ_4$}; \& \node(d22) {$\occ_4$}; \& \node {$\occ_{5}$};\\ 
};

\draw [-,thick,red] (a1.center) -- (a2.center) -- (a3.center) -- (a4.center);
\draw[-,thick,red] (a21.center) -- (a22.center) -- (a23.center) -- (a24.center) -- (a25.center)--(5,-0.6); 
\draw [-,thick, red] (-5,-0.6) -- (a26.center) -- (a27.center);
\draw [-,thick, blue] (b1.center) -- (3.35,0.05) -- (5,0.05);
\draw [-,thick, blue] (-5,0) -- (b3.center)  -- (a26.center) -- (b4.center) -- (b5.center);
\draw [-,thick,green] (c1.center) -- (b4.center) -- (c3.center) -- (c4.center);
\draw [-,thick,green] (c21.center) -- (c3.center) -- (c23.center) -- (c24.center);
\draw [-,thick] (d1.center) -- (d2.center);
\draw [-,thick] (d21.center) -- (d22.center);
\end{tikzpicture}
\caption{A multiline queue for $N=9, m=(2,1,2,2,2)$, with 
the bully-path projection to a multipermutation.}
\label{F:bullypatheg}
%\end{center}
\end{figure}
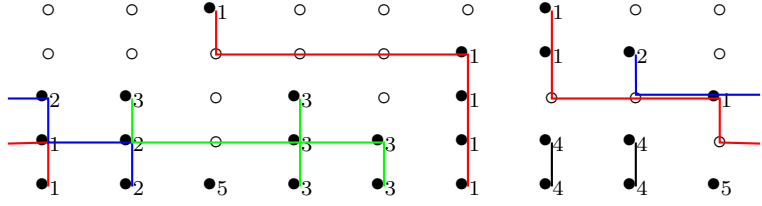

\begin{theorem}[P. Ferrari and J. Martin \cite{FM2}, Theorem 4.1]
\label{T:bullypath}
The $n$th line of the multiline queue process on $\ofm_m$ constructed by the bully path projection is exactly the multispecies TASEP on $\Omega_m$.
\end{theorem}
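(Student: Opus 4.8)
The plan is to derive \refT{T:bullypath} from the standard \emph{lumping} criterion for continuous-time Markov chains \cite[Lemma 2.5]{LPW}: if $\bp\colon\ofm_m\to\Omega_m$ is onto and the Ferrari--Martin generator (call it $M^{FM}$) on $\ofm_m$ has the property that, for every $\sigma\neq\tau$ in $S_m$ and every $C\in\bp^{-1}(\sigma)$, the aggregate rate $\sum_{C'\in\bp^{-1}(\tau)}M^{FM}(C',C)$ depends only on the pair $(\sigma,\tau)$, then $\bigl(\bp(C_t)\bigr)_{t\ge0}$ is Markov with generator given by those aggregate rates. So it suffices to prove
\[
\sum_{C'\in\bp^{-1}(\tau)}M^{FM}(C',C)=M_m(\tau,\sigma)\qquad\text{for all }\tau\neq\sigma\text{ and all }C\in\bp^{-1}(\sigma).
\]
A preliminary lemma I would record first is that $\bp$ is well defined at all, i.e.\ the order in which bully paths are run within a fixed row is immaterial; this is a short exchange argument comparing two paths that cross.

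Next I would unwind the Ferrari--Martin transition rules of \cite{FM1}: there is an independent rate-$1$ clock on each of the $N$ edges $e=\{k,k+1\}$ of the base cycle, and firing the clock at $e$ updates $C$ to $C'$ by a deterministic top-to-bottom cascade that moves at most one $\occ$ between columns $k$ and $k+1$ in each row, the move in row $j$ being determined by what the cascade did to columns $k,k+1$ in row $j-1$. In particular every $M^{FM}$-transition is ``localised at an edge''. The crux is a \emph{surgery lemma}: if $C'$ arises from $C$ by firing the edge $e=\{k,k+1\}$, then the bully-path decomposition of $C'$ is obtained from that of $C$ by rerouting individual bully paths only through columns $k$ and $k+1$, between two consecutive rows, so that the bottom rows of $\bp(C)$ and $\bp(C')$ agree outside $\{k,k+1\}$ and, at those two columns, carry the same pair of labels, in the same or the transposed order. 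I would prove this by induction on the rows, carrying along the bookkeeping of which $\occ$ in each row is used by which bully path, and showing that the rerouting forced in row $j$ exactly absorbs the perturbation the cascade introduced in row $j-1$.

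Granting the surgery lemma, the lumping identity becomes a finite case check. Fix $C\in\bp^{-1}(\sigma)$. If $\tau$ differs from $\sigma$ by anything other than transposing two adjacent entries, then no edge-localised move out of $C$ reaches $\bp^{-1}(\tau)$, so the aggregate rate is $0=M_m(\tau,\sigma)$. If instead $\tau$ is $\sigma$ with columns $i,i+1$ swapped, the surgery lemma shows the only clock whose firing can land in $\bp^{-1}(\tau)$ is the one at $e=\{i,i+1\}$, and that firing it either transposes the two labels in the bottom row or does nothing there. I would verify, by running the cascade row by row, that firing $e$ produces $\tau$ exactly when $\sigma_i<\sigma_{i+1}$ and is trivial on $\bp(C)$ when $\sigma_i>\sigma_{i+1}$. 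Hence the aggregate rate is $1$ when $\sigma_i<\sigma_{i+1}$ and $0$ when $\sigma_i>\sigma_{i+1}$, which is exactly $M_m(\tau,\sigma)$ (recall $M_m(\tau,\sigma)=1$ iff $\tau_i=\sigma_{i+1}>\sigma_i=\tau_{i+1}$). Since in every case the value depends only on $(\sigma,\tau)$, the lumping criterion applies and identifies the lumped generator with $M_m$, which is \refT{T:bullypath}. Combined with \refT{T:mlqprob}, pushing the uniform measure on $\ofm_m$ through $\bp$ then also recovers \refT{T:tasepprob}.

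The genuine obstacle is the surgery lemma, precisely because $\bp$ is strongly non-local: an $\occ$ high in the queue can, through a long alternation of rightward and downward bully-path segments, control a bottom-row label many columns to its right, so it is not obvious \emph{a priori} that a local multiline move perturbs $\bp(C)$ only at the two columns where it acts. What saves the situation is that the Ferrari--Martin cascade is designed in lockstep with the bully-path cascade, so the rerouting triggered in each row exactly cancels the change forced in the row above; making the ``which $\occ$ serves which path'' bookkeeping close under the update is the delicate point. Everything downstream is then routine casework on whether columns $k,k+1$ of a given row read $\occ\occ$, $\occ\vac$, $\vac\occ$ or $\vac\vac$, and, in row $n-1$, how this interacts with the rule that a column with $\vac$ in row $n-1$ receives label $n$.
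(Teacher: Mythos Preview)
The paper does not prove \refT{T:bullypath} at all; it is quoted as a result of Ferrari and Martin \cite[Theorem 4.1]{FM2}, and the surrounding text explicitly says ``We do not need the precise definition of the transitions in the Ferrari-Martin multiline queues. The interested reader is referred to \cite{FM1}.'' So there is no in-paper proof to compare your proposal against.

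As a stand-alone outline, your plan is the right shape --- lumping via \cite[Lemma 2.5]{LPW}, with the work concentrated in showing that a single edge-firing in the multiline process alters $\bp(C)$ only at the two affected columns and in the TASEP-consistent direction. But note that it remains a plan rather than a proof: the ``surgery lemma'' is asserted and its proof is deferred to an unspecified induction with unspecified bookkeeping, and the claim that firing at $\{i,i+1\}$ transposes the bottom labels exactly when $\sigma_i<\sigma_{i+1}$ is stated as something you ``would verify''. Since the paper does not even record the Ferrari--Martin transition rules, a genuine proof here would need to import those rules explicitly and carry out the row-by-row case analysis you allude to; absent that, what you have written is a correct roadmap but not yet a proof.
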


Therefore one way to understand correlations in the multispecies TASEP is by a combinatorial understanding of the multiline queues which give rise to those multipermutations which contribute to the correlations. In particular, the stationary probability of a multipermutation is given as follows.

\begin{corollary}
The stationary probability of a multipermutation $\pi$ in the TASEP is equal to

\[ \frac{\#\{q\in \ofm_m:B(q)=\pi\}}{\prod_{i=1}^{n-1} \binom N{M_i}}.
\]
\end{corollary}

\subsection{Infinite reduced words in $\tilde A_{n-1}$}
\label{S:redwords}

An infinite reduced word in the affine Weyl group $\tilde A_{n-1}$ is a word $\dots s_{i_5}s_{i_4}s_{i_3}s_{i_2}s_{i_1}$, where $s_{i_j}$'s, $0\le i_j\le n$ are generators and such that the word is infinite to the left and all finite initial sequences $s_{i_k}s_{i_{k-1}}\dots s_{i_2}s_{i_1}$ are reduced words of the affine Weyl group.
This can be seen as a walk on the alcoves of the arrangement corresponding to $\tilde A_{n-1}$, i.e. $\{x_i-x_j=d : 1\le i<j \le n, d\in \mathbb Z\}$ such that it never crosses the same hyperplane twice (starting in the fundamental alcove $x_{n}+1>x_1>x_2>\dots>x_{n}$). Such a walk is called reduced and we are interested in random reduced walks, i.e. at each step choosing uniformly one of the legal hyperplanes to cross. See Figure \ref{fig:walk} for an example of a walk that stays in the fundamental Weyl chamber.
From general Coxeter theory, it is not difficult to see that a reduced random walk is a Markov process, i.e. it only matters where the walk is stationed currently.

\begin{figure}[h!]
\begin{center}
\begin{tikzpicture}[scale=0.7]
	\clip (0,1) rectangle (8,9);
\begin{scope}[shift={($(1.15,0)$)}]
\draw[very thick] ($4*(-4, 1)$) -- ($(10,4*1)$);
\draw[very thick] ($(0,10)$) -- ($(0,0) + 10*(0.58,0)$);
\draw[very thick] ($(6.9,10)$) -- ($(6.9,0) - 10*(0.58,0)$);

\filldraw[fill=gray] ($(3*1.15,4)$) -- ($(3.5*1.15,5)$) -- ($(2.5*1.15,5)$);
\end{scope}

\begin{scope}[shift={($(4*1.15,4+2/3)$)},thick]
\draw[->] (0,0) -- ++ (90:2/3) -- ++ (150:2/3) -- ++ (90:2/3) -- ++ (150:2/3) -- ++ (90:2/3) -- ++(30:2/3)-- ++ (90:2/3) -- ++(150:2/3);
\end{scope}

\begin{scope}[shift={($(4*1.15,4)$)}]
\node [blue] at (0,1) {{ $0$}};
\node [blue] at ($(-1*1.15,1)$) {{ $1$}};
\node [blue] at ($(1*1.15,1)$) {{ $2$}};
\node [blue] at (0,-1) {{  $0$}};
\node [blue] at ($(-1*1.15,-1)$) {{ $1$}};
\node [blue] at ($(1*1.15,-1)$) {{ $2$}};

\node [blue] at ($(0.5*1.15,0)$) {{ $1$}};
\node [blue] at ($(-0.5*1.15,0)$) {{ $2$}};
\node [blue] at ($(1.5*1.15,0)$) {{ $0$}};
\node [blue] at ($(-1.5*1.15,0)$) {{ $0$}};

\foreach \he in {-2,-1,0,1}{
\node [blue] at ($(0.25*1.15,0.5+\he)$) {{ $2$}};
\node [blue] at ($(-0.25*1.15,0.5+\he)$) {{ $1$}};
\node [blue] at ($(0.25*1.15+0.5*1.15,0.5+\he)$) {{ $0$}};
\node [blue] at ($(0.25*1.15+1*1.15,0.5+\he)$) {{ $1$}};
\node [blue] at ($(0.25*1.15-1.5*1.15,0.5+\he)$) {{ $2$}};
\node [blue] at ($(0.25*1.15-1*1.15,0.5+\he)$) {{ $0$}};
}
\end{scope}

\begin{scope}[thin]
    \foreach \row in {0, 1, ...,10} {
      
        \draw ($\row*(0, 1)$) -- ($(10,\row*1)$);
 
    }
    \foreach \row in {-5,-4,-3,-2,-1,0,1,2,3,4,5,6,7,8,9,10} {
	\draw ($(\row*1.15,10)$) -- ($(\row*1.15,0) + 10*({1.15*0.5},0)$);
	}
   \foreach \row in {1,2,3,4,5,6,7,8,9,10,11,12,13,14,15,16} {
	\draw ($(\row*1.15,10)$) -- ($(\row*1.15,0) - 10*({1.15*0.5},0)$);
	}
\end{scope}
\begin{scope}[shift={($(4*1.15,4)$)}]
\draw[thick,red,->,dashed] (0:0) -- (90:4.7);
\node [red] at (0.4,4.4) {$\psi$};
\end{scope}
 \end{tikzpicture}
\caption{A reduced random walk on the alcoves of the $\tilde A_2$ arrangement.  The shown walk has reduced word 
$\cdots s_0s_2s_0s_1s_0s_2s_1s_0$. 
The thick lines divide into Weyl chambers.
A random walk staying in the fundamental chamber will almost surely be asymptotically parallel to the red dashed line.  }
\label{fig:walk}
\end{center}
\end{figure}
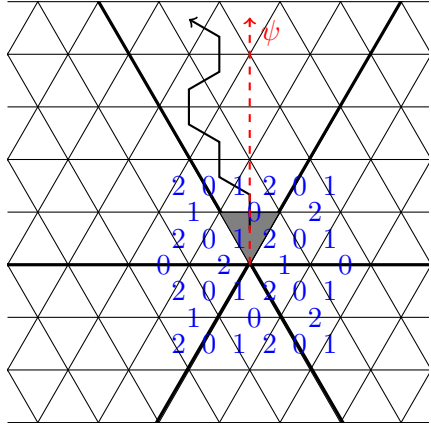

The study of reduced words of affine Weyl groups is part of a larger goal to try to lift results from the finite groups to the affine situation.
In \cite{lam} Thomas Lam proves that a reduced random walk will with probability one 'get trapped' in one of the chambers of the underlying 
finite Weyl group $A_{n-1}$ (which is the symmetric group) after a finite time. 
He also proves that in each  chamber $w\in A_{n-1}$ there exists a vector $\psi_w$ such that the walk will almost surely go in the direction of $\psi_w$. To be more precise:

\begin{theorem} [Lam \cite{lam}]\label{T:Lam}
Let $(X_0,X_1,\ldots)$ be a reduced random walk in $\tilde A_{n-1}$.  There exists a unit vector $\psi$ so that almost surely we have
\begin{equation}
\label{E:main}
\lim_{N \to \infty}v(X_i) \in A_{n-1} \cdot \psi
\end{equation}
where $v(X_i)$ denotes the unit vector pointing towards the central point of the alcove $X_i$.
\end{theorem}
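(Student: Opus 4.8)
The plan is to track the alcove centres $c_i$, so that $v(X_i)=c_i/\lvert c_i\rvert$, and to prove that $c_i/i$ converges almost surely to a fixed nonzero vector. First I would use that a reduced walk never crosses a hyperplane twice, together with the fact that the finite arrangement $A_{n-1}$ contributes only the $\binom{n}{2}$ hyperplanes $x_j=x_k$ through the origin: almost surely there is a finite (random) time after which the walk crosses none of these again and is confined to a single closed Weyl chamber $w\cdot C_0$, with $w\in A_{n-1}$ and $C_0=\{x_1>\dots>x_n\}$. Since $w^{-1}$ maps $w\cdot C_0$ to $C_0$ while preserving the affine arrangement $\{x_j-x_k=d\}$, the trapped tail in $w\cdot C_0$ is — up to the finite and asymptotically negligible influence of the hyperplanes crossed before trapping — a transported copy of the corresponding tail in $C_0$; hence the limiting directions for different chambers form one $A_{n-1}$-orbit, which is the orbit $A_{n-1}\cdot\psi$ in \eqref{E:main}, and it remains to exhibit and identify the common direction.

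The second step is a monotonicity that forces escape to infinity. For a positive root $\alpha=e_j-e_k$ ($j<k$) set $u^\alpha_i=\lfloor\langle\alpha,c_i\rangle\rfloor$, the index of the strip between consecutive parallel hyperplanes $\{\langle\alpha,x\rangle=d\}$ containing $X_i$. Each step of the walk changes $u^\alpha_i$ by $0$ or $\pm1$, a change of $\pm1$ corresponding to crossing one hyperplane of the $\alpha$-family; since no hyperplane is crossed twice, the sequence $(u^\alpha_i)_{i\ge0}$ can never re-enter a strip it has left, hence is monotone. So for each $\alpha$ either $\langle\alpha,c_i\rangle$ stays bounded (the $\alpha$-family is crossed finitely often) or $\langle\alpha,c_i\rangle\to\pm\infty$. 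Every step is the crossing of exactly one hyperplane belonging to exactly one root-family, so $i$ equals the total number of $\alpha$-family crossings over all $\binom{n}{2}$ positive roots up to time $i$, and by monotonicity the number of $\alpha$-crossings up to time $i$ is $\lvert\langle\alpha,c_i\rangle\rvert+O(1)$; summing gives $\sum_{\alpha>0}\lvert\langle\alpha,c_i\rangle\rvert=i+O(1)$. In particular $\lvert c_i\rvert\asymp i\to\infty$, so the direction $v(X_i)$ is meaningful; and if $c_i/i$ converges to some $\bar v$, then $\sum_{\alpha>0}\lvert\langle\alpha,\bar v\rangle\rvert=1$, so $\bar v\neq0$.

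The third step produces the limit of $c_i/i$. Once the walk is trapped in a chamber, the set of not-yet-crossed hyperplanes in a bounded window around the current alcove should be governed by a finite amount of data — an element $Z_i$ of $A_{n-1}$, which is the permutation of the equivalent multispecies TASEP and is the finite Markov chain appearing in \refT{T:Lam2}. One then verifies that $(Z_i)$ is Markov on this finite state space, that it is irreducible, and that the centre displacement $c_{i+1}-c_i$ is a deterministic function of $(Z_i,Z_{i+1})$. The ergodic theorem for finite Markov chains gives $c_i/i\to\bar v:=\E_{\mu}[c_{i+1}-c_i]$ almost surely, where $\mu$ is the unique stationary distribution; by the second step $\bar v\neq0$, so $v(X_i)=c_i/\lvert c_i\rvert\to\bar v/\lvert\bar v\rvert=:\psi$, and combined with the first step this is \eqref{E:main}.

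The main obstacle is the third step. The reduced walk has unbounded memory — its admissible moves depend on the entire history of crossed hyperplanes — and the assertion that, once the walk is trapped, this memory is (outside finitely many exceptions that wash out in the tail) captured by a bounded window, so that the dynamics becomes an honest finite Markov chain with the centre increment as an additive functional, is where the genuine work lies, together with checking irreducibility of that chain. Showing that its drift is nonzero is comparatively painless: it already follows from the monotonicity identity $\sum_{\alpha>0}\lvert\langle\alpha,c_i\rangle\rvert=i+O(1)$ of the second step.
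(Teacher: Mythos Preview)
The paper does not prove this theorem. Theorem~\ref{T:Lam} is stated with the attribution ``[Lam \cite{lam}]'' and is quoted as a known result from Lam's paper; the present paper takes it (together with Theorem~\ref{T:Lam2}) as input and uses it to deduce Theorem~\ref{T:Direction} from the two-point correlation computation in Section~\ref{S:Two adjacent}. So there is no ``paper's own proof'' to compare your proposal against.

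That said, your sketch is a reasonable outline of how such a result is actually established. The trapping argument in your first step and the monotonicity of the strip indices $u^\alpha_i$ in your second step are correct and standard for reduced walks in affine arrangements. You are also right that the substantive content lies in your third step: showing that, after trapping, the process of admissible moves is governed by a finite-state Markov chain (Lam's chain $\Omega$ on $A_{n-1}$, equivalently the multispecies TASEP on permutations), so that the ergodic theorem applies to the alcove-centre increments. You have correctly flagged this reduction as the nontrivial part and have not actually carried it out; that reduction is precisely what Lam proves in \cite{lam}, and it is what the present paper is citing rather than reproving.
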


Lam also proves, and this is the deepest theorem in the paper, that the Markov chain can be projected down to a certain Markov Chain $\om$ on $A_{n-1}$, with the following properties. If we let $\zeta(w)$ be the stationary distribution for $w\in A_{n-1}$ for $\om$, then $\zeta(w)$ is also the probability that the reduced random walk ends up in chamber $w^{-1}w_0$. The chain $\om$ is equivalent to the TASEP we discussed in this papper. Furthermore he proved that the distribution $\zeta$ also determines the direction of the vector $\psi$.

\begin{theorem}[Lam \cite{lam}]\label{T:Lam2}
The vector $\psi$ of Theorem \ref{T:Lam} is given by
$$
\psi = \frac{1}{Z}\sum_{w \in A_{n-1} \;:\; r_\theta w > w}  \zeta(w) w^{-1}(\theta^\vee).
$$
where $\theta$ is the highest root of $A_{n-1}$ and $Z$ is a normalization factor.  Furthermore, 
$$
\Prob(X \in C_w) = \zeta(w^{-1}w_0).
$$
\end{theorem}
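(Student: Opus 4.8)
The plan is to treat the two assertions separately, both times working in the semidirect product $\tilde A_{n-1} = A_{n-1} \ltimes Q^\vee$ with $Q^\vee$ the coroot lattice of $A_{n-1}$. Under this decomposition an alcove $X$ is recorded by a pair: its finite part $u\in A_{n-1}$ and its translation part $\mu\in Q^\vee$, and the centre of $X$ equals $\mu$ up to an error bounded independently of $\mu$; hence $v(X_i)$ and $\mu_i$ have the same limiting direction, and it suffices to control the drift of $\mu_i$. Each step of the reduced walk multiplies the current element on the right by a simple reflection $s_j$: the finite reflections $s_1,\dots,s_{n-1}$ fix the origin and so leave $\mu$ unchanged, and only the affine generator $s_0 = t_{\theta^\vee}r_\theta$ moves $\mu$, changing it by the coroot $u(\theta^\vee)$ determined by the current finite part $u$. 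Thus the whole drift of $\mu$ comes from crossings of the highest-root family of hyperplanes, which is the source of $\theta$ and $\theta^\vee$ in the formula.

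Next I would invoke Lam's structural result that the reduced walk projects onto the finite Markov chain $\om$ on $A_{n-1}$ equivalent to the multispecies TASEP, with unique stationary law $\zeta$, and \refT{T:Lam} that the walk is almost surely eventually confined to a single finite Weyl chamber. Passing to the continuous-time realisation in which every live transition fires at rate $1$ -- a time change that does not affect the direction of $\mu$ -- and averaging the displacement $u(\theta^\vee)$ along the walk, the drift of $\mu$ should come out as a $\zeta$-weighted sum of the vectors $w^{-1}(\theta^\vee)$ over the $\om$-states $w$ from which the $s_0$-move is live, where liveness is identified with the condition $r_\theta w > w$, i.e.\ $w^{-1}\theta\in\Phi^+$ (the matching of $w$ to the finite part $u$ being the usual bookkeeping of the TASEP--alcove dictionary). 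Here lies the first real difficulty: liveness of $s_0$ is \emph{not} a function of the $\om$-state alone but also of the translation part $\mu$, so replacing the walk's own statistics by those of $\om$, and reducing liveness to the finite condition $r_\theta w > w$, must be justified using that on the chamber-conditioned trajectories the translation part is driven deep into the limiting chamber -- this is part of why the projection theorem is the deepest point of \cite{lam}. Granting it, normalising the resulting vector to unit length produces $\psi$ and pins down $Z$, and $A_{n-1}$-equivariance forces the limiting directions over the various chambers to be the single orbit $A_{n-1}\cdot\psi$ of \refT{T:Lam}; one must still check that the representative picked out by the highest-root description is the one in the statement.

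For the remaining identity $\Prob(X\in C_w)=\zeta(w^{-1}w_0)$ the point is to compute the law of the limiting chamber, not merely to know it exists. I would analyse the random infinite reduced word via its biconvex inversion set: read from far out along the asymptotic ray, the word is governed by the time-reversal of $\om$, so the chamber the walk enters inherits, in the limit, the stationary law $\zeta$, with the involution $w\mapsto w^{-1}w_0$ absorbing both the passage from an $\om$-state to a chamber and the reversal of the direction of time. I expect this ``limit from infinity'' to be the main obstacle: one must show that the random infinite reduced word genuinely determines its asymptotic ray, and that the resulting exit law is \emph{exactly} stationary rather than merely equivalent to $\zeta$. Once this and the drift computation are in place, the two displayed formulas follow.
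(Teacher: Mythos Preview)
This theorem is not proved in the present paper at all: it is quoted verbatim from Lam \cite{lam} and used as a black box. The paper's contribution begins only afterwards, where \refT{T:Lam2} is combined with the TASEP computation of \refT{T:cji} to deduce \refT{T:Direction}. So there is no ``paper's own proof'' to compare your plan against.

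Your outline is a plausible reconstruction of how one would prove Lam's result, and you have correctly located the two genuine difficulties: (i) that liveness of the $s_0$-move is not a function of the $\om$-state alone, so the ergodic averaging step needs the deep projection/lumping theorem from \cite{lam}; and (ii) that the exit-law identity $\Prob(X\in C_w)=\zeta(w^{-1}w_0)$ requires controlling the law of the limiting chamber rather than just its existence. One small slip to watch: in the paper's conventions the infinite reduced word grows to the \emph{left}, so each step is a left multiplication by $s_j$, not a right multiplication; this changes whether the increment to $\mu$ is $u(\theta^\vee)$ or $u^{-1}(\theta^\vee)$ and is exactly what produces the $w^{-1}(\theta^\vee)$ in the displayed formula. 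But since the paper itself defers entirely to \cite{lam} for this theorem, the appropriate comparison is with Lam's original argument, not with anything here.
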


He also stated the following conjecture which we prove he offered the following conjectures, where $\rho$ is the sum of all positive roots.

\begin{conjecture}[{Lam \cite[Conjecture 2]{lam}}]\label{conj:A}
For $\tilde A_{n-1}$, $\psi = \alpha\rho$ for some $\alpha > 0$.
\end{conjecture}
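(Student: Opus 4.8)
The plan is to make Lam's formula \refT{T:Lam2} completely explicit and reduce Conjecture~\ref{conj:A} to one two‑point correlation for the stationary TASEP on $S_n$ with a single particle of each species, whose stationary distribution $\zeta$ is that of Lam's chain by \cite{AL}. Realize $A_{n-1}=S_n$ on $\{\sum x_i=0\}\subset\mathbb R^n$ with simple roots $\alpha_c=e_c-e_{c+1}$; then $\theta=e_1-e_n$, $\theta^\vee=e_1-e_n$, and $w^{-1}(\theta^\vee)=e_{w^{-1}(1)}-e_{w^{-1}(n)}$, where $w^{-1}(i)$ is the site occupied by species $i$ in the configuration $w$. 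The standard Bruhat criterion gives $r_\theta w>w\iff w^{-1}(1)<w^{-1}(n)$. Expanding $e_a-e_b=\sum_{c=a}^{b-1}\alpha_c$ for $a<b$ and interchanging sums, \refT{T:Lam2} becomes
\[
\psi=\frac1Z\sum_{c=1}^{n-1}g_c\,\alpha_c,\qquad
g_c:=\zeta\bigl(w^{-1}(1)\le c<w^{-1}(n)\bigr),
\]
the probability that species $1$ occupies one of the first $c$ sites and species $n$ one of the last $n-c$ sites. Since $\rho$ (the sum of all positive roots) equals $\sum_{c=1}^{n-1}c(n-c)\,\alpha_c$ — each positive root $e_i-e_j$ contributing $\alpha_c$ exactly for $i\le c<j$ — and the $\alpha_c$ are independent, Conjecture~\ref{conj:A} is equivalent to $g_c=\lambda\,c(n-c)$ for a single constant $\lambda>0$. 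Here $g_0=g_n=0$, and $g_1=\zeta(w^{-1}(1)=1)=1/n$ by \refP{P:TASEPbasic}(iii) and rotational symmetry, so this would force $\lambda=1/\bigl(n(n-1)\bigr)$.

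To isolate the content, write $g_c=\tfrac cn-h_c$, where $h_c=\zeta(\text{species }1\text{ and }n\text{ are both among the first }c\text{ sites})=\sum_{1\le i<j\le c}\phi(j-i)$ with
\[
\phi(d):=\zeta\bigl(\{w_i,w_{i+d}\}=\{1,n\}\bigr)
\]
(well defined by rotational symmetry). A direct computation gives $g_{c+1}-2g_c+g_{c-1}=-\phi(c)$ for $1\le c\le n-1$; combined with $g_0=g_n=0$, this shows that $g_c$ has the required form $\lambda c(n-c)$ if and only if $\phi$ is constant, and then $\phi\equiv 2/\bigl(n(n-1)\bigr)$. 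Hence Conjecture~\ref{conj:A} is equivalent to the single identity
\[
\phi(d)=\frac{2}{n(n-1)}\qquad\text{for every }d\in\{1,\dots,n-1\},
\]
i.e.\ the probability that two sites at cyclic distance $d$ carry the two extreme species is $2/\bigl(n(n-1)\bigr)$, independent of $d$; granting this, $\psi=\bigl(Zn(n-1)\bigr)^{-1}\rho$ and $\alpha=\bigl(Zn(n-1)\bigr)^{-1}>0$.

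This two‑point identity is exactly the kind of correlation this paper computes: for $d=1$ it is the nearest‑neighbour statement of \refS{S:Two adjacent}, and for general $d$ the extreme‑species specialization of the far‑apart formula of \refS{S:Two far}. I would prove it through the bully‑path projection and the uniformity of \refT{T:mlqprob}, by enumerating the multiline queues $q$ of type $(1,\dots,1)$ with $\{B(q)_i,B(q)_{i+d}\}=\{1,n\}$ and showing this number is $\frac{2}{n(n-1)}\prod_{i=1}^{n-1}\binom N{M_i}$. Here ``$B(q)_j=n$'' is the transparent condition $C_{n-1,j}=\vac$, while ``$B(q)_j=1$'' says the unique bully path issuing from row $1$ terminates in column $j$; after accounting for the bottom row and that single top‑originating path, the count reduces to a constrained enumeration of the intervening rows, which is where the two‑column semistandard Young tableaux of \refS{S:Y} come in.

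The main obstacle is precisely the independence of $\phi(d)$ from $d$. There is no apparent symmetry of the (directional) TASEP forcing it: already for $n=3$ the unsymmetrized correlations $\zeta(w_i=1,\,w_{i+d}=n)$ do depend on $d$ (they are $1/9$ and $2/9$), and only the symmetrization $\phi$ is constant. This is the ``unexplained independence between points closer in position than in value'' highlighted in the introduction (see \refS{S:open}), and I expect its proof to demand the explicit multiline‑queue enumeration rather than a conceptual argument. Once that enumeration is in hand, the reduction above closes the proof of Conjecture~\ref{conj:A}.
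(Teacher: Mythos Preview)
Your reduction has a genuine error, not a gap you can fill later. You write $w^{-1}(\theta^\vee)=e_{w^{-1}(1)}-e_{w^{-1}(n)}$ and $r_\theta w>w\iff w^{-1}(1)<w^{-1}(n)$, treating the TASEP word $w$ directly as the Weyl-group element in Lam's formula. The paper, following the precise identification in \cite{AL}, states instead that in TASEP notation one has $w^{-1}(\theta^\vee)=e_{w_1}-e_{w_n}$ and $r_\theta w>w\iff w_n>w_1$ (see the paragraph before \eqref{E:psi2}). These are not interchangeable: Lam's $w$ and the TASEP configuration are related through an inverse, so your $g_c$ is not the coefficient of $\alpha_c$ in $\psi$.

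This matters because the statement you reduce to is \emph{false} for $n\ge 4$. The far-apart correlations in \refS{S:Two far} give, for the extreme species,
\[
E_{1,n}(1,a)=\tfrac1{n^2}\ (2\le a\le n-1),\quad E_{1,n}(1,n)=\tfrac{2}{n^2},\quad
E_{n,1}(1,2)=\tfrac{2}{n^2},\quad E_{n,1}(1,a)=\tfrac1{n^2}\ (a\ge 3),
\]
by \refC{C:uniform} and \refC{C:dni}. Hence
\[
\phi(d)=E_{1,n}(1,1+d)+E_{n,1}(1,1+d)=
\begin{cases}3/n^2,&d\in\{1,\,n-1\},\\ 2/n^2,&2\le d\le n-2,\end{cases}
\]
which is not constant. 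Feeding this into your own second-difference relation gives $g_c=\bigl(c(n-c)+1\bigr)/n^2$, so for $n=4$ your formula would yield $\psi\propto 4\alpha_1+5\alpha_2+4\alpha_3$, which is not a multiple of $\rho\propto 3\alpha_1+4\alpha_2+3\alpha_3$.

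The paper's route is genuinely different: it expands \eqref{E:psi2} in the $e_k$ basis and reduces to the \emph{nearest-neighbour} correlation $E_{j,i}=\P(w_1=j,\,w_2=i)$ across \emph{all} species pairs (\refT{T:cji}), not to an extreme-species correlation at all distances. With the correct identification your $\alpha_c$ decomposition becomes $g'_c=\sum_{i\le c<j}E_{j,i}$; inserting $E_{j,i}=(j-i)\big/\bigl(n\binom n2\bigr)$ gives $g'_c=c(n-c)/\bigl(n(n-1)\bigr)$ immediately. So your framework can be salvaged, but only after fixing the translation from Lam's chain to the TASEP, and then the required input is precisely \refT{T:cji}, not the constancy of $\phi$.
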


We will prove this conjecture in \refS{S:LimitTheorems}.

\subsection{$n$-cores}
\label{S:ncore}
The $n$-cores are special Young diagrams which come up in the study of the affine Grassmanian in algebraic combinatorics, see e.g. \cite{llms},
and can be defined in several equivalent ways. We will define these in terms of the hook length, but one can also define them in terms of ribbons.

Recall that a partition $\lambda = (\lambda_1, \dots, \lambda_r)$, whose entries are positive and weakly decreasing, can be represented as an array of left-justified boxes (called cells) such there are $\lambda_1$ cells at row 1, $\lambda_2$ cells at row 2, and so on. Such a representation is called a Young diagram, also denoted $\lambda$. We will take the convention that the rows are arranged from top to bottom in increasing order.
The {\em hook} of a cell $c$ in a Young diagram is the set of cells directly to its right and directly below it, as well as $c$ itself. The {\em hook length} of $c$ is the number of elements in its hook. An $n$-core then, is a partition that contains no cell whose hook length is divisible by $n$. Figure \ref{F:core} shows an example of a 4-core.

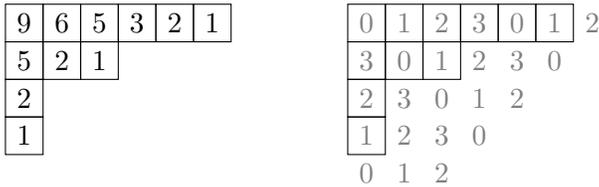
\begin{figure}[h]
\begin{tikzpicture}[scale=0.5]
\draw (0,0) rectangle (1,1);
\draw (0,1) rectangle (1,2);
\draw (1,2) rectangle (2,3);
\draw (0,2) rectangle (1,3);
\draw (0,3) rectangle (1,4);
\draw (1,3) rectangle (2,4);
\draw (2,2) rectangle (3,3);
\draw (2,3) rectangle (3,4);
\draw (3,3) rectangle (4,4);
\draw (4,3) rectangle (5,4);
\draw (5,3) rectangle (6,4);
\node at (0.5,0.5) {1};
\node at (2.5,2.5) {1};
\node at (5.5,3.5) {1};
\node at (4.5,3.5) {2};
\node at (0.5,1.5) {2};
\node at (1.5,2.5) {2};
\node at (3.5,3.5) {3};
\node at (2.5,3.5) {5};
\node at (1.5,3.5) {6};
\node at (0.5,2.5) {5};
\node at (0.5,3.5) {9};
\node at (0.5,-0.5) {\phantom{0} };
 \end{tikzpicture}\qquad\qquad
 \begin{tikzpicture}[scale=0.5]
\draw (0,0) rectangle (1,1);
\draw (0,1) rectangle (1,2);
\draw (1,2) rectangle (2,3);
\draw (0,2) rectangle (1,3);
\draw (0,3) rectangle (1,4);
\draw (1,3) rectangle (2,4);
\draw (2,2) rectangle (3,3);
\draw (2,3) rectangle (3,4);
\draw (3,3) rectangle (4,4);
\draw (4,3) rectangle (5,4);
\draw (5,3) rectangle (6,4);
\node [gray] at (0.5,3.5) {0};
\node [gray] at (1.5,3.5) {1};
\node [gray] at (2.5,3.5) {2};
\node [gray] at (3.5,3.5) {3};
\node [gray] at (4.5,3.5) {0};
\node [gray] at (5.5,3.5) {1};
\node [gray] at (6.5,3.5) {2};
\node [gray] at (0.5,2.5) {3};
\node [gray] at (1.5,2.5) {0};
\node [gray] at (2.5,2.5) {1};
\node [gray] at (3.5,2.5) {2};
\node [gray] at (4.5,2.5) {3};
\node [gray] at (5.5,2.5) {0};
\node [gray] at (0.5,1.5) {2};
\node [gray] at (1.5,1.5) {3};
\node [gray] at (2.5,1.5) {0};
\node [gray] at (3.5,1.5) {1};
\node [gray] at (4.5,1.5) {2};
\node [gray] at (0.5,0.5) {1};
\node [gray] at (1.5,0.5) {2};
\node [gray] at (2.5,0.5) {3};
\node [gray] at (3.5,0.5) {0};
\node [gray] at (0.5,-0.5) {0};
\node [gray] at (1.5,-0.5) {1};
\node [gray] at (2.5,-0.5) {2};
 \end{tikzpicture}
\caption{Two copies of the same 4-core. To the left is the hook length stated in each box. To the right is the content modulo 4.}
\label{F:core}
\end{figure}

The growth model of a random $n$-core is the following. Each position $(i,j)$ (matrix indexing) in the quarter plane is marked by its content 
$j-i \pmod n$. A growth corner of a Young diagram is a square just outside the diagram were a box could be added and it would still be a diagram. At each time step an integer $t$ in $[0,n-1]$ is choosen uniformly at random. For every position that is a growth corner and with content $t\pmod n$ we then add a box to the diagram.
If, for example, $n=4$ and the random sequence of integers $0,2,3,1,2,3,0,1$ we would get the diagram in Figure \ref{F:core}. Note that the first time the integer 2 appears in the sequence no box is added. There is a well known bijection between $n$-cores and affine Grassmanian elements 
of $\tilde A_{n-1}$. The sequence in the growth model correspond to left-multiplication by the simple generators.
For more information on this model of growing $n$-cores and the relation to algebraic combinatorics, see \cite{llms}. 

Note that this is the natural generalization of the growth model studied by Rost \cite{Rost} and Johansson \cite{Joh}. As we will see in \refS{S:LimitShape} the limit shape matches also the limit shape in that situation.

\section{Limit Theorems} 
\label{S:LimitTheorems}
In this section we will prove Conjecture \ref{conj:A}, that is we will determine the limiting direction of reduced random walks in the affine weyl 
groups $\tilde A_{n-1}$. This also implies the exact limiting shape of partitions with no hooks of length $n$, see \refS{S:LimitShape}.

\subsection{Reduced random walks in $\tilde A_{n-1}$}
\label{S:LimitDirection}
Let $e_i$ denote the unit vector in the $i$th coordinate direction. The highest root in \refT{T:Lam2} is $\theta=e_1-e_n$. 
In our setting \refT{T:Lam2} means summing over all permutations with $w_n>w_1$, where $w^{-1}(\theta^\vee)=e_{w_1}-e_{w_n}$ and 
$\zeta(w)$ is the stationary distribution for $w$ in the TASEP over permutations on a ring. 

The important quantity is thus $\P(w_1=i,w_n=j)=\sum_{w:w_1=i,w_n=j} \zeta(w)$. By rotational symmetry, namely Proposition~\ref{P:TASEPbasic}(i),
we can switch to the first and second position of $w$.  
For $j>i$, let $E_{j,i}(n)=\P(w_1=j,w_2=i)$ and
$E_{i,j}(n)=\P(w_1=i,w_2=j)$ in the TASEP $\Omega_n$.
We rewrite Lam's formula in \refT{T:Lam2} as
\begin{equation}\label{E:psi2}
\psi = \frac{1}{Z}\sum_{j=2}^n\sum_{i=1}^{j-1}  E_{j,i} (e_i-e_j).
\end{equation}
Using this, we will be able to prove Conjecture \ref{conj:A}.

\begin{theorem} \label{T:Direction}
The unit vector $\psi=\psi_{id}$ for the limiting direction of a reduced random walk constrained to the fundamental chamber
is
\[\psi=\frac{1}{\sqrt{2\binom{n+1}{3}}}\sum_{k=1}^n (n+1-2k)e_k.
\]
\end{theorem}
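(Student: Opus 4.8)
The plan is to evaluate Lam's formula in the rewritten form \eqref{E:psi2}. The single nontrivial ingredient is the nearest-neighbour two-point correlation $E_{j,i}(n)=\P(w_1=j,w_2=i)$ for $j>i$, which we will compute in \refS{S:Two adjacent}; there it will be shown that $E_{j,i}(n)=\tfrac{2(j-i)}{n^2(n-1)}$. For the present argument the only feature that matters is that $E_{j,i}(n)$ is a positive constant (depending on $n$ alone) times $j-i$; because $\psi$ is a unit vector, even that constant drops out of the final answer. So, modulo \refS{S:Two adjacent}, the proof is a short linear-algebra computation plus the identification of the sum of positive roots.

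Substituting the formula into \eqref{E:psi2} shows that $\psi$ is a positive multiple of
\[
v:=\sum_{j=2}^{n}\sum_{i=1}^{j-1}(j-i)(e_i-e_j).
\]
The next step is to read off the coordinates of $v$. Collecting the pairs $(j,i)$ with $i=k$ (each contributing $+(j-k)$, for $j=k+1,\dots,n$) and those with $j=k$ (each contributing $-(k-i)$, for $i=1,\dots,k-1$) gives
\[
[e_k]\,v=\sum_{d=1}^{n-k}d-\sum_{d=1}^{k-1}d=\binom{n-k+1}{2}-\binom{k}{2}=\frac{n(n+1-2k)}{2}.
\]
Hence $v=\tfrac n2\sum_{k=1}^n(n+1-2k)e_k=\tfrac n2\rho$, where $\rho=\sum_{1\le i<j\le n}(e_i-e_j)=\sum_{k=1}^n(n+1-2k)e_k$ is the sum of the positive roots of $A_{n-1}$. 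In particular $\psi$ is a positive multiple of $\rho$, so Conjecture~\ref{conj:A} follows at once.

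Finally, normalise. As $\psi$ is a unit vector and $v$ is a positive multiple of $\rho$, we have $\psi=\rho/\|\rho\|$; and $\|\rho\|^2=\sum_{k=1}^n(n+1-2k)^2$, the sum of squares of the arithmetic progression $n-1,n-3,\dots,-(n-1)$, works out to $\tfrac{n(n^2-1)}{3}=2\binom{n+1}{3}$. This gives exactly the stated expression for $\psi$.

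The main obstacle is entirely contained in the input formula for $E_{j,i}(n)$: the telescoping evaluation of $[e_k]\,v$ and the norm of $\rho$ are routine, but proving that the stationary correlation at two adjacent sites is proportional to $j-i$ requires analysing, through the bully-path projection, precisely which multiline queues produce multipermutations with a prescribed pair of entries in the first two positions — and that is the work done in \refS{S:Two adjacent}.
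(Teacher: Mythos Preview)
Your proof is correct and follows essentially the same approach as the paper's own: both substitute the formula $E_{j,i}=(j-i)/\bigl(n\binom{n}{2}\bigr)$ from \refT{T:cji} into \eqref{E:psi2} and compute the coefficient of $e_k$ as $\binom{n-k+1}{2}-\binom{k}{2}$ up to a constant. Your write-up is slightly more explicit in carrying out the normalisation $\|\rho\|^2=2\binom{n+1}{3}$, which the paper leaves to the reader.
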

\begin{proof}
Given Theorem~\ref{T:cji} the coefficient of $e_k$ in \eqref{E:psi2} is 
\[
\frac{1}{n\binom{n}{2}}\left(\binom{n+1-k}{2}-\binom{k}{2}\right)=\frac{n+1-2k}{\binom{n}{2}}.
\]
\end{proof}
Note that this can also be stated (as Lam did) as $\psi$ is the sum of all positive roots, i.e. $\psi=\alpha\sum_{1\le i<j\le n} e_i-e_j$, for some constant $\alpha$.

An intuitive way to understand \eqref{E:psi2} is that when particle $i$
jumps over particle $j$ this corresponds to a step in the walk (constrained to the fundamental chamber) crossing a hyperplane of the type $x_i-x_j=d$, for some integer $d$.

\subsection{Limit shape of random $n$-core}
\label{S:LimitShape}
Thanks to the work of Lam we deduce as a direct consequence of Theorem \ref{T:Direction} the limit shape for a random $n$-core. 
Let $C_n$ be the piecewise linear curve with vertices 
$v_i=\gamma(\binom{i}{2},\binom{n-i+1}{2})$, for $i=1..n$, where $\gamma=\frac{2\sqrt 6}{n\sqrt{n^2-1}}$ is a scaling constant to make the 
area 1 between the curve and the axes, see Figure \ref{F:corelimit}.

\begin{figure}[h]
\begin{center}
\begin{tikzpicture}[scale=0.8]
\draw[very thick] (0,0.5)--(0.5,2)--(1.5,3)--(3,3.5); 
\draw[thin](0,0)--(0,3.5)--(3.5,3.5);
\end{tikzpicture}
\end{center}
\caption{The limiting piecewise-linear curve $C_4$ for random 4-cores.}
\label{F:corelimit}
\end{figure}
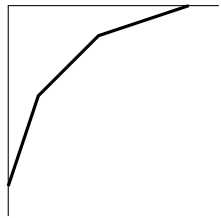

Let $D_n^K$ be the boundary of a random $n$-core after $K$ time steps scaled so the area of the $n$-core is 1.
Now, the theorem is that the curve $C_n$ is the limit shape of a random $n$-core.

\begin{theorem} 
\label{T:cores}
For each $\epsilon, \delta>0$ there is an $L$, such that for all $K>L$ we have
\[ \P(|D_n^K-C_n|>\delta)<\epsilon,
\]
where the distance between the curves is the supremum of the distances measured along the diagonals $y=-x+c$ for all $c$.
\end{theorem}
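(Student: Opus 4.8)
\medskip
\emph{Proof plan.}
The plan is to deduce the statement from two ingredients: the dictionary, due to Lam (see \cite{lam} and \refS{S:ncore}), between the random $n$-core growth model and reduced random walks in $\tilde A_{n-1}$, and the explicit limiting direction provided by Theorem~\ref{T:Direction}.

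First I would record the correspondence in the form needed. Under the bijection between $n$-cores and affine Grassmannian elements of $\tilde A_{n-1}$ recalled in \refS{S:ncore}, the growth model — choose a residue $t$ uniformly and adjoin all addable cells of content $t$ — is left multiplication by the simple generator $s_t$, which either raises the length of the Grassmannian element by one or leaves it unchanged (when no cell of content $t$ is addable). Hence the genuine growth steps trace out a reduced random walk in $\tilde A_{n-1}$, and the remaining steps are a harmless time change; since every $n$-core has an addable cell of at least one content, each step is genuine with probability at least $1/n$, so the number of cells of the core tends to infinity almost surely as $K\to\infty$. Moreover, starting from the empty core this walk is exactly the reduced random walk constrained to the fundamental chamber, so by Theorems~\ref{T:Lam} and~\ref{T:Direction} the unit vector $v(X)$ pointing to the center of the current alcove converges almost surely to $\psi=\psi_{id}$, a positive multiple of $\rho=\sum_{k=1}^n(n+1-2k)e_k$.

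Next I would turn this into convergence of the boundary curve. The lines $y=-x+c$ are precisely the lines of constant content, and each genuine step thickens the core along one residue class of such lines. Lam's analysis — via the abacus (edge-sequence) encoding of $n$-cores — expresses the profile of the core along these anti-diagonals in terms of the numbers $c_0,\dots,c_{n-1}$ of cells in the $n$ content residue classes, and these are affine functions of the alcove position $v(X)$; after rescaling to area $1$ the $O(1)$ lattice fluctuations of the profile vanish and the rescaled boundary becomes the piecewise-linear curve with $n$ segments whose vertices are the normalized partial sums of the limiting content densities. Inserting $v(X)\to\psi\propto\rho$, a direct computation (using, e.g., $\sum_{j\le i}(n+1-2j)=i(n-i)$) shows that the $i$-th vertex is proportional to $\bigpar{\binom i2,\binom{n-i+1}2}$ for $i=1,\dots,n$; calling the common factor $\gamma$ and imposing that the region between the curve and the coordinate axes have area $1$ pins down $\gamma=\frac{2\sqrt6}{n\sqrt{n^2-1}}$ by an elementary integration. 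This identifies the limiting curve with $C_n$. As a consistency check, for $n=2$ the core is the staircase, $C_2$ degenerates to the segment $\{x+y=\sqrt2\}$, the rescaled staircase limit, and as $n\to\infty$ the curves $C_n$ approach the Rost limit shape \cite{Rost,Joh}.

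Finally, since $v(X)\to\psi$ almost surely and the size of the core tends to infinity almost surely, and since $D_n^K$ has only $n$ linear pieces — so that uniform control of the $n$ vertices yields uniform control of the whole curve along every anti-diagonal $y=-x+c$ — one obtains $|D_n^K-C_n|\to 0$ almost surely, hence in probability, which is exactly the asserted $(\epsilon,\delta)$-statement. The main obstacle is the content-statistics step above: correctly stating and attributing the passage from the alcove position $v(X)$ to the anti-diagonal profile of the $n$-core, and verifying that the direction $\rho$ produces precisely the vertices $\bigpar{\binom i2,\binom{n-i+1}2}$ together with the normalizing constant $\gamma$. The remaining points — the time-change argument, the uniformity coming from finitely many linear pieces, and the upgrade from almost-sure to in-probability convergence — are routine.
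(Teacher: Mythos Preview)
Your proposal is correct and follows essentially the same route as the paper: the paper's entire proof is the single sentence that the theorem ``follows from Proposition 2 in \cite[Section 5]{lam} and \refT{T:Direction},'' and your proof plan is a faithful unpacking of exactly that --- Lam's dictionary between the $n$-core growth and the reduced random walk in the fundamental chamber, together with the explicit direction $\psi\propto\rho$ from \refT{T:Direction}, yielding the vertices $v_i=\gamma\bigl(\binom i2,\binom{n-i+1}2\bigr)$ of $C_n$. The ``content-statistics step'' you flag as the main obstacle is precisely what is packaged into Lam's Proposition~2, so you are not missing anything the paper provides.
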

The proof of this theorem follows from Proposition 2 in \cite[Section 5]{lam} and \refT{T:Direction}. 

\begin{remark}
The classical limit shape for partitions, first proved by Rost \cite{Rost} in studies of first passage percolation,
is $\sqrt x +\sqrt {-y} =6^{1/4}$, scaled so the area is 1. Note that the vertices of $C_n$
converge to being on that shape, so the limit of $C_n$ as $n\to\infty$ is the classical shape. 
\end{remark}

\begin{remark} The slopes of the curve $D_n^K$ are deterministically $i/(n-i)$, i.e. it will have a pattern of $n-i$ steps to the left and then $i$ steps down for a long time until it changes to $n-i-1$ steps to the left and $i+1$ steps down. The position of this change correspond to the
vertex $v_{n-i}$ of the limit shape. The positions where the slope changes are random and converge to the $v_i$s.
\end{remark}

\section{Correlation of two adjacent particles} 
\label{S:Two adjacent}
For a particle $i$ in the TASEP $\om_n$ all the particles of a higher class look the same and all the particles of a lower class look the same. If we want to study the correlation of the particle of class $j$ and class $i, i<j$ in $\om_n$ we can study the correlation of  2 and 4 in the five species TASEP $\om_{m}$, where ${m}=(i-1,1,j-i-i,1,n-j)$. We call this the {\bf \pp{}} 
and it will be used repeatedly in this and the coming sections. Even though five species systems are easier to study than arbitrary $n$, they are often still too complicated to make precise calculations. We will go one step further and project to many three species systems. In this section we will use 
${m}_{s,t}=(s,t,n-s-t)$ to denote a system with $s$ 1's, $t$ 2's and remaining 3's. We will focus on all possible projections where $i>s$ and $j>s+t$ so that the particle of class $i$ (resp. $j$) will become a 2 (resp. 3). Studying the correlation between 2 and 3 in these systems will give us the correlation between particle $i$ and $j$. 

Let $T_{s,t}=\P(w_1=3,w_2=2)$ in $\om_{m_{s,t}}$. For $i<j$, recall that $E_{j,i}(n)=\P(w_1=j,w_2=i)$ and $E_{i,j}(n)=\P(w_1=i,w_2=j)$ in the TASEP $\om_n$. The following lemma is a consequence of the  \pp{}.

\begin{lemma} For all $0\le s,t<n$ with $s+t\le n$, 
\[
T_{s,t}=\sum_{j=t+s+1}^n\sum_{i=s+1}^{s+t} E_{j,i}(n).
\] 
\end{lemma}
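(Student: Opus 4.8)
The plan is to unwind the definition of the projection principle and carefully track which multipermutations in $\om_n$ map to the two states of interest in $\om_{m_{s,t}}$. Recall that $T_{s,t} = \P(w_1 = 3, w_2 = 2)$ in the three-species system $\om_{m_{s,t}}$ with $m_{s,t} = (s,t,n-s-t)$. The projection principle says that to compute such a correlation, we lump the $n$-species TASEP $\om_n$ (on permutations) down to $\om_{m_{s,t}}$ by the class map $\phi$ that sends a value $v \in [n]$ to $1$ if $v \le s$, to $2$ if $s < v \le s+t$, and to $3$ if $v > s+t$. Since lumping of Markov chains pushes the stationary distribution forward, $\P(w_1 = 3, w_2 = 2)$ in $\om_{m_{s,t}}$ equals the stationary probability in $\om_n$ of the event $\{\phi(w_1) = 3,\ \phi(w_2) = 2\}$.

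First I would justify that $\om_n$ does lump onto $\om_{m_{s,t}}$ under $\phi$; this is precisely what the projection principle (as introduced just before the lemma, and proven in \cite{AL}) provides, so I would simply cite it rather than reprove it. Second, I would rewrite the lumped event explicitly: $\phi(w_1) = 3$ means $w_1 \in \{s+t+1, \dots, n\}$, and $\phi(w_2) = 2$ means $w_2 \in \{s+1, \dots, s+t\}$. Third, since these value ranges are disjoint, the event $\{\phi(w_1) = 3,\ \phi(w_2) = 2\}$ is the disjoint union over $j \in \{s+t+1, \dots, n\}$ and $i \in \{s+1, \dots, s+t\}$ of the events $\{w_1 = j,\ w_2 = i\}$, each of which has stationary probability $E_{j,i}(n)$ by definition (note $j > i$ throughout, so the notation $E_{j,i}$ is the appropriate one). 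Summing gives exactly
\[
T_{s,t} = \sum_{j = t+s+1}^{n} \sum_{i = s+1}^{s+t} E_{j,i}(n).
\]

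The main thing to be careful about — and the only real obstacle — is the direction of the lumping and that the class map is compatible with the TASEP dynamics, i.e. that merging classes $\{1,\dots,s\}$, $\{s+1,\dots,s+t\}$, $\{s+t+1,\dots,n\}$ respects the bully-path / hopping rules so that the projected chain is genuinely the three-species TASEP on $\om_{m_{s,t}}$ and not some other chain. This is exactly the content of the projection principle, which reflects the general fact that in the multispecies TASEP "all particles of a higher class look the same and all particles of a lower class look the same," so relabelling a contiguous block of classes by a single class is always a valid lumping. Once that is granted, the remainder is the bookkeeping above. I would also remark that the edge cases ($t = 0$, or $s+t = n$) make one or both index ranges empty, consistently giving $T_{s,t} = 0$, which matches the fact that there are then no particles of the relevant class.
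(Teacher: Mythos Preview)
Your proposal is correct and follows exactly the approach the paper indicates: the paper simply states that the lemma ``is a consequence of the projection principle'' without further argument, and what you have written is precisely the unpacking of that principle --- the class map $\phi$ lumps $\om_n$ onto $\om_{m_{s,t}}$, the stationary distribution pushes forward, and the preimage of $\{w_1=3,\,w_2=2\}$ decomposes as the disjoint union over the stated $j,i$ ranges. Your treatment of the edge cases is a welcome addition that the paper leaves implicit.
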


By a standard inclusion-exclusion argument, we have 
\[
E_{j,i}(n)=T_{i-1,j-i}-T_{i,j-i-1}-\left (T_{i-1,j-i+1}+T_{i,j-i}\right ),
\]
and hence it is enough to compute $T_{s,t}$ to know $E_{j,i}$.
Computing the $E_{j,i}(n)$ for small values of $n$ reveals a clear pattern.
See Table \ref{Table:cji} for the values of $n\binom{n}{2}E_{j,i}$, when $n=5$.
The pattern in the lower left triangle of this table is easy to spot and for larger $n$ also the pattern in the upper right triangle.

\begin{table}[hbtdp]
\begin{center}
\begin{tabular}{|l||c|r|r|r|r|}
\hline
$w_1 \;\backslash w_2$ & 1& 2&3 & 4 & 5\\
\hline
\hline
1 & $0$ & $4$ & $2$ & $2$ & $2$ \\
\hline
2 & $1$ & $0$ & $5$ & $2$ & $2$ \\
\hline
3& $ 2$ & 1 & 0 & $5$ & 2\\
\hline
4& $ 3$ & 2& 1 & 0 & 4\\
\hline
5 &4& $ 3$ & 2& 1 & 0\\\hline
\end{tabular}
\vskip2mm
\caption{Values of $n\binom{n}{2}E_{w_1,w_2}$ for $n=5$.}\label{Table:cji}
\end{center}
\end{table}%

\begin{theorem}\label{T:cji}
For any $1\le i < j\le n$, we have 
\be \label{E:cji}
\begin{split} 
E_{j,i} &= \frac{j-i}{n\binom{n}{2}},\\
E_{i,j} & =\begin{cases}
\ds\frac{1}{n^2}+\frac{j(n-j)}{n^2(n-1)},& \quad\text{if $i=j-1$,}\\
\\
\ds\frac{1}{n^2},& \quad\text {if $i<j-1$.}\\
\end{cases}
\end{split}
\ee
\end{theorem}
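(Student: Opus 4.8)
The plan is to reduce the whole statement to the three-species TASEP by the projection principle and then to read off the needed two-point functions from the Ferrari--Martin multiline queue model. For the lower-triangular entries $E_{j,i}$ ($j>i$), the preceding lemma and the inclusion--exclusion identity that follows it express $E_{j,i}$ as a fixed $\pm1$ combination of four values of $T_{s,t}=\P(w_1=3,w_2=2)$ in $\Omega_{m_{s,t}}$, so it suffices to establish the closed formula $T_{s,t}=\frac{t(n-s)(n-s-t)}{n^2(n-1)}$ for all admissible $s,t$.

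For the upper-triangular entries $E_{i,j}$ ($i<j$) I would introduce the mirror quantity $U_{s,t}=\P(w_1=2,w_2=3)$ in $\Omega_{m_{s,t}}$, prove the companion identity $U_{s,t}=\sum_{j=s+t+1}^{n}\sum_{i=s+1}^{s+t}E_{i,j}(n)$ by the same projection argument, deduce the analogous inclusion--exclusion $E_{i,j}=U_{i-1,j-i}-U_{i,j-i-1}-U_{i-1,j-i+1}+U_{i,j-i}$, and reduce to the closed formula $U_{s,t}=\frac{(n-s-t)(tn+s)}{n^2(n-1)}$. (One could instead supplement $T_{s,t}$ by a single further three-species two-point function, e.g.\ $\P(w_1=2,w_2=2)$, since the remaining entries $p_{xy}=\P(w_1=x,w_2=y)$ of a three-species system are then pinned down by the marginal identities $\sum_y p_{xy}=m_x/n$, $\sum_x p_{xy}=m_y/n$ together with the two-species lumping identities such as $p_{11}=\frac{s(s-1)}{n(n-1)}$; but this does not bypass the core computation, which in either case involves two honest multiline-queue enumerations.)

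The heart of the proof is thus the exact evaluation of two three-species two-point functions by counting multiline queues. Since the uniform measure on $\ofm_{m_{s,t}}$ projects to the stationary TASEP under the bully-path map $\bp$ (\refT{T:mlqprob}, \refT{T:bullypath}), the quantity $T_{s,t}\binom{n}{s}\binom{n}{s+t}$ equals the number of pairs $(A,B)$ with $A\subseteq[n]$, $|A|=s$ (the row-$1$ $\occ$'s) and $B\subseteq[n]$, $|B|=s+t$ (the row-$2$ $\occ$'s), such that $1\notin B$ (so column $1$ gets label $3$) and $2\in B$ with the $\occ$ at position $2$ of row $2$ not used by any row-$1$ bully path (so column $2$ gets label $2$); similarly for $U_{s,t}$. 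The combinatorial input is the description, implicit in the bully-path construction, of the set of row-$2$ $\occ$'s used by row-$1$ paths as the image of the cyclic first-come-first-served matching that sends the $\occ$ at $(1,a)$ to the first not-yet-used $\occ$ of row $2$ at a cyclic position $\ge a$ — a set independent of the order in which the paths are run. I would then carry out a cycle-lemma / ballot-type analysis of this cyclic matching: fixing $B$ and summing over $A$ (or vice versa), classify $A$ by how many of its elements fall in each cyclic arc cut out by the row-$2$ $\occ$'s near position $2$, reducing the count to a single alternating sum of products of binomial coefficients that telescopes to the displayed product. The values for $n\le 5$ in Table~\ref{Table:cji} give a convenient check.

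The step I expect to be the main obstacle is precisely this cyclic bookkeeping: because the ring lets the greedy matching wrap around, ``the $\occ$ at position $2$ is unmatched'' is \emph{not} a simple local condition on $A$ near position $2$, and one must keep control of the global ballot condition that determines which row-$2$ $\occ$'s survive. Once the formulas for $T_{s,t}$ and $U_{s,t}$ are in hand, the proof concludes by substituting them into the two inclusion--exclusion identities and simplifying, which produces $E_{j,i}=\frac{j-i}{n\binom n2}$ for all $j>i$, the value $\frac1{n^2}$ for $E_{i,j}$ when $j>i+1$ (the ``independence'' regime $\P(w_1=i,w_2=j)=\P(w_1=i)\P(w_2=j)$), and the stated value when $j=i+1$; this last value may alternatively be recovered, once the others are known, from the marginal identity $\sum_b\P(w_1=i,w_2=b)=1/n$.
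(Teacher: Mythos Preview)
Your overall strategy coincides with the paper's: project to the three-species systems $\Omega_{m_{s,t}}$, compute $T_{s,t}=\P(w_1=3,w_2=2)$ and its companion $U_{s,t}=\P(w_1=2,w_2=3)$ from the Ferrari--Martin multiline queues, then extract $E_{j,i}$ and $E_{i,j}$ by the inclusion--exclusion you wrote. Your target closed forms $T_{s,t}=\frac{t(n-s)(n-s-t)}{n^2(n-1)}$ and $U_{s,t}=\frac{(n-s-t)(tn+s)}{n^2(n-1)}$ are exactly what the paper proves.

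Where you diverge is at the enumeration step, and this is where you are making life harder than necessary. You plan a cycle-lemma/ballot analysis of the cyclic first-come-first-served matching and correctly flag the wrap-around as the sticking point. The paper bypasses this completely. Once the forced pattern in columns $1,2$ is fixed (for $T_{s,t}$: both rows vacant at column~1; row~1 vacant and row~2 occupied at column~2), the event ``the $\occ$ at $(2,2)$ is not reached by any row-1 bully path'' becomes a \emph{linear} dominance condition on the positions of the \emph{vacancies} in columns $3,\dots,n$: writing $z_{a,b}$ for the $b$-th vacant position in row $a$, one needs $z_{2,b}\le z_{1,b+t}$ throughout. After the reversal $\lambda_{a,\cdot}=n+1-z_{a,\cdot}$ this is precisely a two-column semistandard Young tableau with column lengths $n-s-2\ge n-s-t-1$ and maximum entry $n-2$, so the count is given in closed form by the hook--content formula \eqref{hookcontent}. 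No cyclic bookkeeping is required, because with columns $1,2$ pinned down the wrap question reduces to whether the greedy matching on the linear segment $3,\dots,n$ succeeds.

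For $U_{s,t}$ the paper proceeds identically, now with row~2 occupied at column~1 and vacant at column~2, and splits into two subcases according to whether position $(1,2)$ is $\vac$ or $\occ$; each subcase is again a single two-column SSYT count (column lengths $(n-s-2,n-s-t-1)$ and $(n-s-1,n-s-t-1)$, maximum entry $n-2$), and the two add to your displayed product.

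So your plan is not wrong, but the step you flag as the main obstacle dissolves once you work with vacancy positions rather than $\occ$'s and recognise the SSYT shape; your proposed ``alternating sum that telescopes'' would end up re-deriving the two-column hook--content formula. The boundary cases you would need to check ($s=0$, $t=0$, $s+t=n$) are also handled in the paper and are straightforward.
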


\begin{remark}\label{R:indep}
Note that the probability that $j$ is followed by $i$ is the same for all $j<i-1$, is there an easy explanation for this? The probabilities $\P(w_1=j)=\P(w_2=i)=\frac 1n$, hence the probability $1/n^2$ could plausibly be interpreted as independence. 
\end{remark}

Before we go on to the proof, we recall some standard combinatorial
definitions. A semistandard Young Tableau (SSYT) \cite{EC2} of shape
$\lambda$ is a partition of shape $\lambda$ with positive integers in
the boxes that are strictly increasing along columns and weakly
increasing along rows.
Set $\ssyt_{r,k}(m)$ to be the number of semistandard Young
tableaux on a shape of two columns of lengths $r\ge l$ with no number
exceeding $m$.
An application of the hook-content
formula from standard combinatorial theory (see \cite[Corollary
  7.21.4]{EC2}) leads to
\be \label{hookcontent}
\ssyt_{r,k}(m)=
\begin{cases} 
\frac{r-k+1}{r+1}\binom{m}{r}\binom{m+1}{k}&, \text{ if $m\ge r\ge k\ge 0$} \\
0&, \text { otherwise}
\end{cases}
\ee

\begin{proofof}{Theorem~\ref{T:cji}}
 We will start with the case $E_{j,i}$. 
As described above we will use the projection principle and study the three species system $\om_{m_{s,t}}$, where ${m}_{s,t}=(s,t,n-s-t)$.
Since an easy calculation leads to
\begin{equation*}
\sum_{j=t+s+1}^n\sum_{i=s+1}^{s+t}  (j-i)
=t(n-s)(n-t-s)/2,
\end{equation*}
it suffices to prove that 
\be \label{E:21}
n\binom{n}{2}T_{s,t}=t(n-s)(n-t-s)/2.\ee

We will compute $T_{s,t}$ using the theory of multiline queues by Ferrari and Martin explained in Section~\ref{S:TASEP}. In the chain $\ofm_{m_{s,t}}$ we have $s$ particles (occupied sites) on the first row and $s+t$ particles on the second row. 
Note that if $s+t=n$, then there are no 3's so the formula \eqref{E:21} is trivially satisfied. Assume therefore that $s+t<n$ and $s,t\ge 1$.
Any multiline queue that projects to a word starting with a 3 and a 2 in second place must look necessarily have the following structure.
\be \label{E:que32}
\begin{array}{c c c c c c} \vac & \vac & .  & . & \dots & .\\ 
\vac & \occ & . & . & \dots & .\\ 
\hline 3 & 2 &  . & . & \dots & .\end{array}.
\ee
That is, no first class particle can be standing in queue to use the particle in the second position of the second row. This happens exactly if 

\[
\begin{array}{cccccccc}
  3 & \le z_{1,3} <  & \dots & < z_{1,t+2} <  & \cdots  & <  z_{1,n-s-1} <  & z_{1,n-s} & \le n\\
& &  & \text{\begin{sideways}  $\le$  \end{sideways}} &  \dots & 
\text{\begin{sideways}  $\le$   \end{sideways}} &  
\text{\begin{sideways}  $\le$   \end{sideways}} &  \\
& & 3 &  \le z_{2,2}  <  & \dots &  <  z_{2,n-s-t-1} < & z_{2,n-s-t}, &\\
\end{array}
\]
where $z_{a,b}$ is the position of the $b$'th vacant position in row $a$.  To
count these configurations, we use the connection with semistandard Young tableaux explained above.

The change of variables $\lambda_{1,b}=n+1-z_{1,n-s+1-b}$ and $\lambda_{2,b}=n+1-z_{2,n-s-t+1-b}$ shows that 
the number of multiline queues as in \eqref{E:que32} is equivalent to 
$\ssyt_{n-s-2,n-s-t-1}(n-2)$ given in \eqref{hookcontent}.

Since $|\ofm_{m_{s,t}}| = \binom{n}{s}\binom{n}{s+t}$, we get
\begin{align*}
\binom{n}{s}\binom{n}{s+t}T_{s,t}&=\frac{t}{n-s-1}\binom{n-2}{n-s-2}\binom{n-1}{n-s-t-1}\\
&=\binom{n}{s}\binom{n}{s+t}\frac{t(n-s)(n-s-1)(n-s-t)}{(n-s-1)n(n-1)n}.
\end{align*}

This simplifies to
\begin{equation*}
T_{s,t}=\frac{t(n-s)(n-s-t)}{n^2(n-1)}
\end{equation*}
 as wanted.
 There are two possibilities left. If $t=0$, there are no 2's and \eqref{E:21} is again trivially true. 
 If $s=0$ we have no 1's, $t$ 2's and $n-t$ 3's.
 The first row is thus only vacant positions and of all the $\binom{n}{t}$ ways of placing $t$ occupied positions on row 2, there are $\binom{n-2}{t-1}$
 which gives a word starting with a 3 and then a 2 as in \eqref{E:que32}. Thus 
  \[
T_{0,t}=\frac{\binom{n-2}{t-1}}{\binom{n}{t}}=\frac{t(n-t)}{n(n-1)},
 \]
again satisfying \eqref{E:21}.

\medskip
The cases $j<i$ are derived analogously, which we describe briefly. We use the \pp{ } to the same markov chain $\om_{m_{s,t}}$ as before.
In this case we can sum up the $E_{j,i}$ to get 
\[
\sum_{j=t+s+1}^n\sum_{i=s+1}^{s+t} E_{j,i}(n)=\frac{(n-s-t)(s+tn)}{n^2(n-1)}.
\]

We must thus prove that $\binom{n}{s}\binom{n}{s+t}T_{s,t}$, which is the number of multiline queues of the form
\be \label{E:que23}
\begin{array}{c c c c c c} \vac & . & .  & . & \dots & .\\ 
\occ & \vac & . & . & \dots & .\\ 
\hline 2 & 3 &  . & . & \dots & .\end{array},
\ee
is equal to 
$\binom{n}{s}\binom{n}{s+t}\frac{(n-s-t)(s+tn)}{n^2(n-1)}.$
This splits into two cases:
\begin{romenumerate}
\item There's no first class particle in queue above the 3,
\[
\begin{array}{c c c c c c} \vac & \vac & .  & . & \dots & .\\ 
\occ & \vac & . & . & \dots & .\\ 
\hline 2 & 3 &  . & . & \dots & .\end{array}
\]
This is identical to the previous case and is enumerated by 
\[
\ssyt_{n-s-2,n-s-t-1}(n-2)=\frac{t}{n-s-1}\binom{n-2}{n-s-2}\binom{n-1}{n-s-t-1}. 
\]

\item There is a first class particle in queue above the 3,
\[
\begin{array}{c c c c c c} \vac & \occ & .  & . & \dots & .\\ 
\occ & \vac & . & . & \dots & .\\ 
\hline 2 & 3 &  . & . & \dots & .\end{array}.
\]
Here we have one more vacant position to decide in the first row, which is the same as the first column of the SSYT being one longer. 
This case is therefore enumerated by 
\[
\ssyt_{n-s-1,n-s-t-1}(n-2)=\frac{t+1}{n-s}\binom{n-2}{n-s-1}\binom{n-1}{n-s-t-1}.
\] 
\end{romenumerate}
Adding the result of these two cases gives the desired result 
$\binom{n}{s}\binom{n}{s+t}T_{s,t}$ as wanted.
\end{proofof}

Using Theorem~\ref{T:cji}, we can obtain as a corollary, the joint distribution of the speeds of particles 0 and 1 in the TASEP speed Process 
\cite[Theorem 1.7]{AAV}. A brief explanation of how our results are related is as follows. The relevant model from \cite{AAV} is a TASEP on 
$\mathbb Z$ with classes uniformly taken in $[-1,1]$. 
The largest difference is that the TASEP studied here, $\om_m$ with $m=(1,\dots,1)$, is on a finite ring. Letting this ring grow to infinity the 
stationary distribution would converge to that of the line, see \cite{FM2}, and thus also the correlations. 
In their situation there are some obvious independences that one does not expect to find in the finite case but still mysteriously seem to be present; 
see discussion in Section \ref{S:open}.

\begin{corollary}\label{C:density2} [Amir-Angel-Valk\'o \cite{AAV}]

In the limit as $n \to \infty$, the probability mass function $E_{w_1,w_2}$ converges to the density function $f(x,y)+\mathbbm{1}_{\{x=y\}}g(x)$, where 
$g(x) = \frac{1-x^2}{8}$ and 
\[
f(x,y) = \begin{cases}
\frac{1}{4} & x>y,\\
\frac{y-x}{4} & x<y
\end{cases}
\]
\end{corollary}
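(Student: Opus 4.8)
The plan is to derive Corollary~\ref{C:density2} directly from the exact formulas in Theorem~\ref{T:cji} by a routine limiting argument, treating the off-diagonal and diagonal parts separately.

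First I would set up the scaling: for indices $1 \le i < j \le n$, let $x = (n+1-2j)/n$ and $y = (n+1-2i)/n$, so that as $n \to \infty$ the pair $(x,y)$ ranges over the region $-1 < x < y < 1$ of the square $[-1,1]^2$; here $w_1 = j$ corresponds to speed $\approx x$ and $w_2 = i$ to speed $\approx y$, reflecting that the higher-class particle is the slower one in the TASEP-speed-process convention of \cite{AAV}. The probability mass is spread over cells of area $\approx (2/n)^2 = 4/n^2$ in $(x,y)$-space, so to read off a density we multiply each probability by $n^2/4$ and let $n\to\infty$. For the case $w_1 > w_2$, i.e. $E_{j,i}$ with $i<j$: Theorem~\ref{T:cji} gives $E_{j,i} = (j-i)/(n\binom{n}{2})= 2(j-i)/(n^2(n-1))$, so $\tfrac{n^2}{4}E_{j,i} = \tfrac{1}{2}\cdot\tfrac{n}{n-1}\cdot\tfrac{j-i}{n} \to \tfrac{1}{2}\cdot\tfrac{y-x}{2} = \tfrac{y-x}{4}$, since $\tfrac{j-i}{n} = \tfrac{y-x}{2}$. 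This matches $f(x,y)$ in the region $x<y$ (in the paper's $(x,y)$ convention the "$x>y$'' branch of $f$ is never reached because by rotational symmetry we always arrange the slower particle first; I would remark on how the two branches of $f$ arise from the two orderings and that $\frac14 = \frac14$ is the trivial $O(1/n^2)$ term $E_{i,j}$ with $i<j-1$ after the same rescaling).

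Next, the diagonal contribution: the extra term in $E_{i,j}$ for $i = j-1$ is $\tfrac{j(n-j)}{n^2(n-1)}$. Here "nearest neighbour in class'' collapses two adjacent classes, which in the limit sits on the diagonal $x = y$ in speed space. The $n$ cells with $i=j-1$, $j=2,\dots,n$, each of width $2/n$ along the diagonal, carry total extra mass $\sum_{j=2}^{n}\tfrac{j(n-j)}{n^2(n-1)}$; to get the diagonal density $g$ I would instead look at the local rate: with $x \approx (n-2j)/n$ fixed, $\tfrac{j(n-j)}{n^2(n-1)} \approx \tfrac{1}{n}\cdot\tfrac{j}{n}\cdot\tfrac{n-j}{n} = \tfrac{1}{n}\cdot\tfrac{1-x}{2}\cdot\tfrac{1+x}{2} = \tfrac{1-x^2}{4n}$, and converting from "per cell of diagonal-length $2/n$'' to "per unit length'' multiplies by $n/2$, giving $\tfrac{1-x^2}{8} = g(x)$, exactly as claimed. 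I would also double-check the normalization: $\int\!\!\int_{-1<x<y<1} f + \int_{-1}^1 g = 1$, which pins down the constants and confirms the $\mathbbm{1}_{\{x=y\}}$ delta-mass interpretation.

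The only genuine subtlety — and the step I would be most careful about — is bookkeeping the change of variables between "class index'' and "speed'', and in particular justifying that passing from a finite ring $\om_n$ to $\mathbb{Z}$ does commute with taking correlations; I would cite \cite{FM2} for the convergence of the stationary measure on the ring to that on the line and hence of the two-point marginals, exactly as sketched in the paragraph preceding the corollary, and note that no further work is needed beyond the elementary asymptotics of Theorem~\ref{T:cji} above. Everything else is a one-line limit.
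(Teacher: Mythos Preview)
Your proposal is correct and follows essentially the same approach as the paper: both multiply the mass function by $n^2/4$ for the absolutely continuous part and by $n/2$ for the singular diagonal part, then take limits using the exact expressions in Theorem~\ref{T:cji}. The only cosmetic difference is your choice of the speed-process orientation $x=(n+1-2j)/n$ (higher class $\leftrightarrow$ lower speed) rather than the paper's $j/n\to(y+1)/2$; your added normalization check and the explicit appeal to \cite{FM2} for ring-to-line convergence are nice touches not spelled out in the paper's proof.
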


\begin{proof}
We want to take the limit $w_1,w_2,n \to \infty$ at the same rate and rescale the resulting square so that it becomes $[-1,1]^2$. Following the convention in Theorem~\ref{T:cji}, we will let $i$ (resp. $j$) be the smaller (resp. larger) of $w_1$ and $w_2$. We thus have to take the limit so that
\[
\frac {i}n \to \frac{x+1}2 \text{ and }
\frac {j}n \to \frac{y+1}2 \text{ as $n \to \infty$}.
\]
A natural way of converting the probability mass function in \eqref{E:cji} into a probability density function in $[-1,1]^2$ is to divide the latter into $n^2$ smaller squares of area $4/n^2$ so that the $(w_1,w_2)$'th square contributes 
$E_{w_1,w_2}$ according to \eqref{E:cji}. This is naturally done by multiplying the values by $n^2/4$ and taking the limit $n \to \infty$.

When $w_1 = i < j-1 = w_2-1$, $n^2 E_{i,j}/4$ directly gives $1/4$. On the other hand,
\[
\frac{n^2}{4} E_{j,i} = \frac{j-i}{2(n-1)} \longrightarrow_{n \to \infty} \frac{y-x}{4},
\]
again as desired. However, this procedure does not work for $E_{j-1,j}$ because that line carries a nontrivial fraction of the mass. (Notice that
the scaling of $E_{j-1,j}$ in \eqref{E:cji} goes like $1/n$ for $j,n$ large.)
The resulting continuous measure on $[-1,1]^2$ is thus not absolutely continuous with respect to Lebesgue measure.

The correct way to take the limit is after multiplying by $\frac{n}{2}$ instead leading to
\[
\frac{n}{2} E_{j-1,j} = \frac{n-1+j(n-j)}{2n(n-1)}
\longrightarrow_{n \to \infty} \frac{1-y^2}{8},
\]
which becomes the singular continuous part of the density on the diagonal, completing the proof.
\end{proof}

Summing over all $j$ and $i$ we obtain exact corrections to the relative speeds between adjacent particles in the TASEP speed Process.

\begin{corollary}
\label{C:addprob12}
Let as before $w_1,w_2$ be the first two letters in the cyclic permutation in $\om_n$. Then probability at stationarity is exactly
\begin{align*} 
\P[w_1>w_2]  & = \frac{1}{3}+\frac{1}{3n}\\
\P[w_1=w_2-1]  & = \frac{1}{6}+\frac{7n-6}{6n^2}\\
\P[w_1<w_2-1]  & = \frac{1}{2}-\frac{3n-2}{2n^2},
\end{align*}
leading to the asymptotic result in  \cite[Theorem 1.7]{AAV}.
\end{corollary}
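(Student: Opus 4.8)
The plan is to sum the formulas for $E_{w_1,w_2}$ from \refT{T:cji} over the appropriate ranges and simplify. For $\P[w_1 > w_2]$ I would compute $\sum_{1\le i<j\le n} E_{j,i} = \sum_{1\le i<j\le n} \frac{j-i}{n\binom n2}$. The double sum $\sum_{1\le i<j\le n}(j-i)$ is a standard quantity: grouping by the difference $d=j-i$, it equals $\sum_{d=1}^{n-1} d(n-d) = n\binom n2 - \sum_{d=1}^{n-1} d^2$, which after inserting $\sum_{d=1}^{n-1}d^2 = \frac{(n-1)n(2n-1)}{6}$ and dividing by $n\binom n2 = \frac{n^2(n-1)}{2}$ collapses to $\frac13 + \frac{1}{3n}$. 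This is a routine but slightly fiddly binomial computation; I would just present the final identity $\sum_{1\le i<j\le n}(j-i) = \binom{n+1}{3}$ and divide.

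Next, for $\P[w_1 = w_2 - 1]$ I would sum $E_{j-1,j}$ over $j = 2,\dots,n$, i.e. $\sum_{j=2}^n \left(\frac{1}{n^2} + \frac{j(n-j)}{n^2(n-1)}\right)$. The first term contributes $\frac{n-1}{n^2}$; the second contributes $\frac{1}{n^2(n-1)}\sum_{j=2}^n j(n-j) = \frac{1}{n^2(n-1)}\sum_{j=1}^{n-1} j(n-j)$ (the $j=n$ term vanishes, and re-indexing $j\mapsto n-j$ shows the sums over $j=1,\dots,n-1$ and $j=2,\dots,n$ agree), and $\sum_{j=1}^{n-1} j(n-j) = \binom{n+1}{3}$ again. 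Plugging this in and combining over the common denominator $6n^2$ should give $\frac16 + \frac{7n-6}{6n^2}$. Finally, $\P[w_1 < w_2 - 1]$ follows by complementation: it equals $1 - \P[w_1\ge w_2] - \P[w_1 = w_2-1]$, where $\P[w_1 \ge w_2] = \P[w_1 > w_2]$ since $w_1 = w_2$ is impossible for a permutation (each value occurs once), so $\P[w_1 < w_2-1] = 1 - \left(\frac13 + \frac{1}{3n}\right) - \left(\frac16 + \frac{7n-6}{6n^2}\right) = \frac12 - \frac{3n-2}{2n^2}$ after clearing denominators. Alternatively one can sum $E_{i,j} = \frac{1}{n^2}$ directly over the $\binom{n-1}{2}$ pairs with $j > i+1$, which gives $\frac{(n-1)(n-2)}{2n^2} = \frac12 - \frac{3n-2}{2n^2}$, providing a consistency check.

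The only genuinely substantive remaining point is deducing \cite[Theorem 1.7]{AAV}: this should follow by letting $n\to\infty$ in the three expressions, which gives the limiting probabilities $\frac13$, $\frac16$, $\frac12$ for the three events in the TASEP speed process — consistent with integrating the density from \refC{C:density2} (the $f$-part integrates to $\frac14$ on $\{x>y\}$ and $\frac18$ on $\{x<y\}$ over $[-1,1]^2$, while $g$ integrates to $\frac16$ on the diagonal). There is no real obstacle here; the whole proof is bookkeeping with the evaluation $\sum_{1\le i<j\le n}(j-i) = \binom{n+1}{3}$ doing all the work. The main thing to be careful about is the singular diagonal term, i.e. not conflating $\P[w_1 = w_2-1]$ (which is $\Theta(1)$) with a Lebesgue-negligible set — but \refT{T:cji} already isolates it cleanly, so this is just a matter of keeping the three cases separate throughout.
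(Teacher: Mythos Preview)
Your approach---summing the closed forms of \refT{T:cji} over the relevant index ranges---is exactly the paper's (it offers no more detail than ``summing over all $j$ and $i$''). The computations for $\P[w_1>w_2]$ and for $\P[w_1<w_2-1]$ (both by complementation and by the direct count $\binom{n-1}{2}/n^2$) are correct.

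There is, however, a real slip in the middle case. You assert that re-indexing $j\mapsto n-j$ gives $\sum_{j=2}^{n} j(n-j)=\sum_{j=1}^{n-1} j(n-j)$. But the summand $j(n-j)$ is invariant under $j\mapsto n-j$, while the range $\{2,\dots,n\}$ is sent to $\{0,\dots,n-2\}$, not to $\{1,\dots,n-1\}$; in fact $\sum_{j=2}^{n} j(n-j)=\binom{n+1}{3}-(n-1)$ (check $n=3$: the left side is $2$, not $4$). The reason your final answer is nonetheless right is that the printed formula $E_{j-1,j}=\frac{1}{n^2}+\frac{j(n-j)}{n^2(n-1)}$ in \refT{T:cji} itself appears to contain a typo: comparing with Table~\ref{Table:cji} (for $n=5$ the table gives $50\,E_{1,2}=4$, whereas the printed formula gives $5$), or invoking the particle--hole symmetry $E_{j-1,j}=E_{n-j,n+1-j}$ of \refP{P:TASEPbasic}(ii), shows the numerator should be $(j-1)(n-j+1)=i(n-i)$. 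With that correction the sum becomes $\sum_{j=2}^{n}(j-1)(n-j+1)=\sum_{i=1}^{n-1} i(n-i)=\binom{n+1}{3}$ legitimately, and the remainder of your argument goes through unchanged.
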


\section{Two-point correlations further apart}
\label{S:Two far}
In this section we will study the correlation between two positions of the TASEP that are a fixed distance apart.
Consistent with the previous section, let us denote
\[
E_{j,i}(1,a):=\P[w_1=j, w_a=i], 
\]
namely the probability that first letter is $j$ and the $a$'th letter is $i$ at stationarity in the multispecies TASEP. 
Although the notation is somewhat redundant, it avoids possible confusion between positions and labels.
Without loss of generality, we can take $j>i$ because if $j<i$, the theorem gives a formula by the rotational symmetry in Proposition~\ref{P:TASEPbasic}(i),
namely  $E_{j,i}(1,a) = E_{i,j}(1,n-a+2)$. 
Our main result here is an explicit formula for $E_{j,i}(1,a)$ in Theorem \ref{T:dinY}. 

An important ingredient in the proofs is the following object, whose study we
will undertake in Section~\ref{S:Y}.
\begin{align*}
Y_{r,l}^{\beta}(m)&:=\text{number of $\ssyt$s  on two columns $r \geq l$ with no entry exceeding} \\ 
& \text{$m$ such that the number $\beta$ appears somewhere in the second column}.
\end{align*}

\begin{theorem}\label{T:dinY} For $j>i$, 
\[
\begin{split} 
E_{j,i}(1,a) =& \frac{Y^{a-1}_{n-i,n-j+1}(n-1)}{\binom n{i-1} \binom n{j-1}}
- \frac{Y^{a-1}_{n-i,n-j}(n-1)}{\binom n{i-1} \binom n{j}} \\
&- \frac{Y^{a-1}_{n-i-1,n-j+1}(n-1)}{\binom n{i} \binom n{j-1}}
+ \frac{Y^{a-1}_{n-i-1,n-j}(n-1)}{\binom n{i} \binom n{j}}.
\end{split}
\]
\end{theorem}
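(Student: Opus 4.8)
The plan is to mimic the inclusion–exclusion strategy already used for \refT{T:cji}, but now tracking an extra piece of information about position $a$. As in the proof of \refT{T:cji}, I would first reduce the correlation of two positions in $\om_n$ to a three-species system via the \pp{}: all particles of class $<i$ are merged into $1$'s, the particle of class $i$ becomes a $2$, and everything of class $>i$ with the particle of class $j$ being the ``worst'' $3$. More precisely, I would introduce $T_{s,t}(1,a):=\P[w_1=3,\ w_a=2]$ in $\om_{m_{s,t}}$ with $m_{s,t}=(s,t,n-s-t)$, prove the analogue of the summation lemma $T_{s,t}(1,a)=\sum_{j>s+t}\sum_{s<i\le s+t}E_{j,i}(1,a)$, and then recover $E_{j,i}(1,a)$ by the same four-term inclusion–exclusion
\[
E_{j,i}(1,a)=T_{i-1,j-i}(1,a)-T_{i,j-i-1}(1,a)-T_{i-1,j-i+1}(1,a)+T_{i,j-i}(1,a).
\]
Matching this with the claimed formula, it then suffices to show that
\[
\binom{n}{s}\binom{n}{s+t}\,T_{s,t}(1,a)=Y^{a-1}_{n-s-2,\,n-s-t-1}(n-1)
\]
(and to handle the boundary cases $s=0$, $t=0$, $s+t=n$ separately, exactly as before).

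The heart of the argument is the combinatorial identification of the multiline queues counted by $\binom{n}{s}\binom{n}{s+t}T_{s,t}(1,a)$ with the tableau-count $Y^{a-1}_{n-s-2,n-s-t-1}(n-1)$. Here I would argue as in \refT{T:cji} that a multiline queue in $\ofm_{m_{s,t}}$ projecting under $\bp$ to a word with $w_1=3$ must have the structure \eqref{E:que32} in its first two columns (no first-class particle queued above position $1$, a vacancy above the $3$ in both of the top two rows at column $1$), and then ask what the extra constraint $w_a=2$ imposes. Running the bully paths, $w_a=2$ means the bully path that produces the $2$ in column $1$ — which starts at the $\occ$ in position $(2,1)$ of the queue in \eqref{E:que32} — must actually originate, after tracing back along row $2$, from column $a$; equivalently, column $a$ carries the first $\occ$ on row $2$ to the right of (cyclically after) column $1$, with no row-$1$ $\occ$'s interfering. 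Encoding the vacancy positions in rows $1$ and $2$ as before via $\lambda_{1,b}=n+1-z_{1,\cdot}$, $\lambda_{2,b}=n+1-z_{2,\cdot}$, the ``$w_1=3$'' conditions translate (as in \refT{T:cji}) into an SSYT of shape two columns of lengths $n-s-2\ge n-s-t-1$ with entries $\le n-2$; the additional ``$w_a=2$'' condition should translate precisely into the requirement that a specific value — namely $a-1$ after the change of variables — appears in the second (shorter) column, which is exactly the defining condition of $Y^{a-1}_{\cdot,\cdot}(\cdot)$. The shift of the entry bound from $n-2$ to $n-1$ in $Y^{a-1}_{\cdot,\cdot}(n-1)$ versus $\ssyt_{\cdot,\cdot}(n-2)$ is the subtle point I would need to get exactly right: it reflects that tracking position $a$ effectively frees up one more slot in the cyclic order, and I expect it to come out of carefully redoing the change of variables while keeping column $a$ marked rather than column $1$.

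The main obstacle, then, is precisely this bijective step: correctly reading off, from the bully-path dynamics, that the event $\{w_1=3,\ w_a=2\}$ is equivalent to an SSYT-type configuration with a prescribed marked value in the second column, including pinning down that the marked value is $a-1$ and that the entry bound becomes $n-1$. Everything else is bookkeeping: the summation lemma and the inclusion–exclusion are formal, and the boundary cases $s=0$, $t=0$, $s+t=n$ reduce (as in \refT{T:cji}) to elementary binomial identities, using the conventions for $Y^{\beta}_{r,l}(m)$ and $\ssyt_{r,l}(m)$ established in \refS{S:Y} and \eqref{hookcontent}. I would also want to double-check, using rotational symmetry \refC{C:density2}'s style of reasoning and Proposition~\ref{P:TASEPbasic}(i), that the formula is consistent under $a\mapsto n-a+2$ combined with $i\leftrightarrow j$, which gives a useful sanity check on the four-term expression. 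Once the bijection is in place, substituting \eqref{hookcontent}-type closed forms for $Y$ (to be derived in \refS{S:Y}) and simplifying yields \refT{T:dinY}.
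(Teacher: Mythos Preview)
Your overall strategy---project to a three-species system, count the multiline queues giving $(w_1,w_a)=(3,2)$, identify that count with a constrained SSYT number, then recover $E_{j,i}(1,a)$ by four-term inclusion--exclusion---is exactly the paper's approach, and your inclusion--exclusion formula is correct. The gap is in the combinatorial identification, and it produces a wrong intermediate claim.

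You assert that a queue with $w_1=3$ must have ``the structure \eqref{E:que32} in its first two columns.'' But \eqref{E:que32} encodes the \emph{adjacent} event $w_1=3,\,w_2=2$; for $w_1=3$ alone the only requirement is that row~2 be vacant at column~1---row~1 at column~1 is free, and column~2 is entirely unconstrained. (In \eqref{E:que32} the row-1 vacancy at column~1 was forced only because otherwise a bully path from $(1,1)$ would claim the adjacent $\occ$ at $(2,2)$.) This over-constraining is why your claimed identity $\binom{n}{s}\binom{n}{s+t}T_{s,t}(1,a)=Y^{a-1}_{n-s-2,\,n-s-t-1}(n-1)$ has the wrong column lengths; the correct one is $Y^{a-1}_{n-s-1,\,n-s-t}(n-1)$ (this is \refL{L:xylesseqn}), and substituting \emph{that} into your inclusion--exclusion does give the theorem.

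The device you are missing, which simultaneously fixes the column lengths and the entry bound $n-1$, is a rotation: use translation invariance to relabel positions so the $2$ sits at column $n$ rather than at column $a$. Then exactly one row-1 vacancy is pinned (at column $n$, above the $2$), leaving $n-s-1$ free row-1 vacancies in $\{1,\dots,n-1\}$; all $n-s-t$ row-2 vacancies remain free, and the condition $w_1=3$ becomes the single requirement that one of them equal the rotated position $n-a+1$ of the $3$. The no-queuing condition at column $n$ supplies the interlacing inequalities, and the change of variables $\lambda=n-z$ lands the tableau in $\{1,\dots,n-1\}$ with the marked row-2 vacancy becoming the entry $a-1$ in the second column---precisely the defining constraint of $Y^{a-1}_{\cdot,\cdot}(n-1)$. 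Your description of the bijection (``the bully path that produces the $2$ in column $1$ \ldots starts at $(2,1)$ \ldots column $a$ carries the first $\occ$ on row $2$ to the right of column $1$'') does not capture this and should be replaced by the rotated analysis.
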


We postpone the proof of Theorem~\ref{T:dinY} for later, but study some of its consequences for now. 
In the following situation, we get what is possibly the most striking result of this section.

\begin{corollary} \label{C:uniform}
For $1\le i<j\le n$ and $a\le j-i$ we have 
\[
E_{i,j}(1,a)=\frac{1}{n^2}.
\]
\end{corollary}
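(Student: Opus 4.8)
The strategy is to specialize Theorem~\ref{T:dinY} to the range $a\le j-i$ and show that the four $Y$-terms telescope to $1/n^2$. The key structural fact I would isolate first is a ``local'' formula for $Y^{\beta}_{r,l}(m)$ when the marked value $\beta$ is small relative to the column lengths: in our application the second-column length is $l\in\{n-j,\,n-j+1\}$, the first-column length is $r\in\{n-i-1,\,n-i\}$, and the marked position is $\beta=a-1$ with $a\le j-i$, so $\beta\le j-i-1$. Under the change of variables used in the proof of Theorem~\ref{T:cji} (namely $\lambda$-entries read off from vacancy positions), requiring the value $a-1$ to appear in the second column, together with the constraint $a-1$ small, forces the top $a-1$ rows of the two-column shape to be filled in the unique minimal way (entries $1,2,\dots$ down each column) up to the row where $a-1$ sits; equivalently, the condition ``$\beta$ appears in column two'' becomes automatic once $\beta\le l$ and the tableau is semistandard, so effectively $Y^{\beta}_{r,l}(m)$ reduces to a product of a fixed count and an unconstrained $\ssyt$ count on a smaller shape. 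I would make this precise as a lemma (proved in Section~\ref{S:Y}, or quoted from there): for $\beta\le l$, $Y^{\beta}_{r,l}(m)$ equals $\ssyt_{r,l}(m)$ minus the count of tableaux in which $\beta$ is \emph{skipped} in column two, and when $\beta$ is in the forced initial segment this correction telescopes cleanly.

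\textbf{Main computation.} Granting such a reduction, write $Y^{a-1}_{r,l}(n-1)=c_{a}\cdot \ssyt_{r',l'}(n-1-\text{something})$ or, more likely, express each of the four terms as a common factor times $\ssyt_{r,l}$-type expressions via \eqref{hookcontent}, plug into the displayed formula of Theorem~\ref{T:dinY}, and simplify. I expect the four binomial-ratio terms to combine, using the Pascal-type identities $\binom{n}{k-1}+\binom{n}{k}=\binom{n+1}{k}$ implicitly through the algebra, into the single value $\tfrac1{n^2}$; the appearance of exactly $1/n^2$ is consistent with Remark~\ref{R:indep} and Theorem~\ref{T:cji} (the case $a=2$, $i<j-1$ is literally the second line of \eqref{E:cji}), which is a useful sanity check on the algebra. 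I would first verify the endpoint $a=2$ against Theorem~\ref{T:cji} to fix conventions, then do the general $a$ case, where the dependence on $a$ should drop out entirely — that cancellation of $a$ is the phenomenon the corollary is really asserting.

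\textbf{Alternative, cleaner route.} If the tableau bookkeeping gets heavy, an appealing substitute is a direct multiline-queue argument paralleling the proof of Theorem~\ref{T:cji}: a multiline queue projecting to a word with $w_1=i$ and $w_a=j$ (with $j>i$ and $a\le j-i$) has enough ``room'' between positions $1$ and $a$ that the bully paths responsible for placing the $i$ at position $1$ and the $j$ at position $a$ never interfere — the value gap $j-i$ exceeds the position gap $a-1$, so no lower-class particle can be forced into the window. This should let one count the relevant configurations as a product over the two rows independently, each factor contributing the same count as the marginal $\P(w_1=i)=1/n$ (Proposition~\ref{P:TASEPbasic}(iii)) contributes, giving $1/n\cdot 1/n$. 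Making ``never interfere'' rigorous — precisely, showing the constraint from Theorem~\ref{T:dinY}'s setup decouples into two independent one-row conditions whose counts are $\binom{n-1}{\cdot}$-ratios summing to $1/n$ each — is the one step I would budget the most care for.

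\textbf{Main obstacle.} The hard part is the combinatorial reduction of $Y^{\beta}_{r,l}(m)$ in the regime $\beta\le l$: one must show that the ``$\beta$ occurs in the second column'' constraint, after the change of variables relating vacancy positions to tableau entries, is equivalent to a purely local condition that either forces an initial segment of both columns or contributes a clean multiplicative factor, so that the four-term alternating sum in Theorem~\ref{T:dinY} collapses. Everything downstream is Pascal-identity manipulation of binomials, which the paper has already shown (in the proof of Theorem~\ref{T:cji}) it is willing to carry out; the conceptual content is entirely in why $a\le j-i$ is exactly the threshold that makes the answer position-independent.
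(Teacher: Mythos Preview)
Your proposal has a genuine gap at the very first step: \refT{T:dinY} gives a formula for $E_{j,i}(1,a)$ with $j>i$, i.e.\ for $\P(w_1=j,\,w_a=i)$, not for $E_{i,j}(1,a)$. You cannot simply plug $a\le j-i$ into those $Y$-terms and expect to be computing the quantity in the corollary. The paper's proof first converts via rotational symmetry, $E_{i,j}(1,a)=E_{j,i}(1,n+2-a)$, which replaces $a$ by $a'=n+2-a\ge i+2+n-j$; so in the $Y$-formula the marked value $\beta=a'-1$ is \emph{large}, not small. It then applies particle-hole symmetry to rewrite the four terms with first-column length in $\{j-2,j-1\}$ and second-column length in $\{i-1,i\}$. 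The key lemma is \refC{C:Yconst}: for $\beta\ge l+m-r$ the quantity $Y^{\beta}_{r,l}(m)$ is independent of $\beta$, so one may set $\beta=m=n-1$ and invoke the explicit formula of \refP{P:Y1}(ii). A short computation then shows the two-term block $g_{j,i}(a)=i/n^2$, whence $E_{j,i}(1,a')=g_{j,i}-g_{j,i-1}=1/n^2$.

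Separately, the combinatorial reduction you propose for ``$\beta$ small'' is not correct as stated. Semistandard columns are strictly increasing but need not begin at $1$, so having $\beta\le l$ in no way forces the top rows to be the minimal filling, and the condition ``$\beta$ appears in column two'' is not automatic (e.g.\ $l=3$, $\beta=2$, second column $1,3,4$). There is no clean multiplicative factor in the small-$\beta$ regime; the $a$-independence really lives on the large-$\beta$ side and is exactly the content of \refC{C:Yconst}. Your alternative multiline-queue sketch does not supply a mechanism either: ``the bully paths never interfere because the value gap exceeds the position gap'' is suggestive but not a statement about the Ferrari--Martin projection, and it is precisely this decoupling that the tableau machinery is there to establish.
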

\begin{remark}
Note that $\P[w_1=j]=\P[w_a=i]=\frac{1}{n}$, so this could be interpreted as an independence when the difference in value is larger than the difference in position.
\end{remark}

We can follow the ideas of Corollary~\ref{C:density2} and derive the probability density when $i,j,n \to \infty$ for a fixed position $a$. This directly 
leads to the proof of \cite[Theorem 6.1(i)]{AAV}.

\begin{corollary}[Theorem 6.1(i), \cite{AAV}]
The joint probability density function $f(x,y)$ for the two-point correlation
$\mathbb{P}(w_1,w_a)$ when $a$ is fixed, $w_1<w_a$ and $w_1,w_a,n \to \infty$ is $1/4$ and consequently, $\mathbb{P}(w_1<w_a) = 1/2$ in the limit.
\end{corollary}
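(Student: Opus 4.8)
The plan is to repeat, essentially verbatim, the rescaling argument from the proof of \refC{C:density2}, now with \refC{C:uniform} in place of \refT{T:cji}. Parametrise the limit by writing $w_1=i$, $w_a=j$ with $i<j$ — so that we are on the event $w_1<w_a$ — and let $i,j,n\to\infty$ with $i/n\to(x+1)/2$ and $j/n\to(y+1)/2$; since $i<j$ this forces $-1\le x<y\le 1$. As in \refC{C:density2}, turn the mass function into a density on $[-1,1]^2$ by tiling the square into $n^2$ cells of area $4/n^2$, assigning mass $E_{i,j}(1,a)$ to the $(i,j)$-cell, so that the limiting density at $(x,y)$ equals $\lim_{n\to\infty}\frac{n^2}{4}E_{i,j}(1,a)$.

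The one substantive point is then immediate from \refC{C:uniform}: for a fixed position $a$ and any $(x,y)$ with $x<y$ strictly one has $j-i=w_a-w_1\sim\frac n2(y-x)\to\infty$, hence eventually $a\le j-i$, and therefore $E_{i,j}(1,a)=1/n^2$ \emph{exactly}, with no lower-order correction term. Thus $\frac{n^2}{4}E_{i,j}(1,a)=\frac14$ for all large $n$, so the limiting density on $\{-1<x<y<1\}$ is the constant $1/4$; this is the density assertion, matching \cite[Theorem 6.1(i)]{AAV}. The contrast with the adjacent case is instructive: in \refC{C:density2} the cells on the line $i=j-1$ carried mass of order $1/n$ and produced the singular term $g(x)$, whereas here every cell with $j-i\ge a$ carries mass precisely $1/n^2$, so no singular component is generated in the interior of $\{x<y\}$.

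For $\P[w_1<w_a]$, split $\sum_{i<j}E_{i,j}(1,a)$ according to whether $j-i\ge a$. The pairs with $j-i\ge a$ contribute, by \refC{C:uniform}, exactly $\frac1{n^2}\#\{1\le i<j\le n:\,j-i\ge a\}=\frac1{n^2}\bigl(\binom n2-\sum_{d=1}^{a-1}(n-d)\bigr)\to\frac12$ — equivalently $\int\!\!\int_{\{-1<x<y<1\}}\frac14\,dx\,dy=\frac14\cdot 2=\frac12$ — so the absolutely continuous part of the limiting law has total mass $1/2$. The main obstacle is the remaining sum $\sum_{0<j-i<a}E_{i,j}(1,a)=\P[0<w_a-w_1<a]$: those cells rescale onto the diagonal $x=y$, and for small $a$ they carry genuine mass (for $a=2$ this sum is exactly the $g(x)$-mass $1/6$ of \refC{C:density2}, consistent with \refC{C:addprob12}), so $\P[w_1<w_a]\to\frac12$ only holds up to this near-diagonal contribution. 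To remove it one needs a uniform estimate $E_{i,j}(1,a)=O(1/n^2)$ away from the near-diagonal cells, obtained from the explicit formula of \refT{T:dinY} together with the asymptotics of $Y^{\beta}_{r,l}(m)$ established in \refS{S:Y}, so that $\P[0<w_a-w_1<a]\to0$ in the regime in which AAV's two-points-far statement is formulated (distances growing, where the two coordinates decouple into independent uniforms on $[-1,1]$). The density value $1/4$ on $\{x<y\}$, however, requires nothing beyond \refC{C:uniform}.
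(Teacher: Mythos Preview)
Your density argument is correct and is exactly what the paper intends: it explicitly says to ``follow the ideas of Corollary~\ref{C:density2}'' using the exact value $E_{i,j}(1,a)=1/n^2$ from \refC{C:uniform}, and you do precisely this. Since $a$ is fixed and $j-i\to\infty$ whenever $x<y$ strictly, \refC{C:uniform} applies for all large $n$, and the rescaled density is identically $1/4$ on the open triangle, with no error term. That part is complete.

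Where you go astray is the last paragraph. The phrase ``consequently, $\P(w_1<w_a)=1/2$'' should be read as the statement that the absolutely continuous part of the limiting law on $\{x<y\}$ has total mass $\int\!\!\int_{-1<x<y<1}\tfrac14\,dx\,dy=\tfrac12$ --- which you already computed correctly. It cannot be read as $\lim_{n\to\infty}\P[w_1<w_a]=\tfrac12$ for fixed $a$: that limit is simply false. Indeed for $a=2$, \refC{C:addprob12} gives $\P[w_1<w_2]=\P[w_1=w_2-1]+\P[w_1<w_2-1]\to\tfrac16+\tfrac12=\tfrac23$, the excess $\tfrac16$ being exactly the singular diagonal mass $\int_{-1}^1 g(x)\,dx$ of \refC{C:density2}. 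So your attempt to argue that the near-diagonal contribution $\P[0<w_a-w_1<a]$ vanishes is not only unnecessary, it is wrong for fixed $a$; and your proposed fix --- passing to a regime with $a\to\infty$ --- flatly contradicts the hypothesis ``$a$ fixed'' in the corollary. Drop the final paragraph after the integral computation and you have the paper's proof.
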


\begin{corollary} \label{C:dni}
For $2 \leq a \leq n$ and $1 \leq i \leq n-1$,
\[
E_{n,i}(1,a)=
\frac{1}{n^2}+\frac{(n-i)\binom{i-1}{a-2}-\binom{i-1}{a-1}}{a\, n\, \binom{n}{a}}.
\]
\end{corollary}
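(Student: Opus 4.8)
\textbf{Proof proposal for Corollary~\ref{C:dni}.}

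The plan is to specialize Theorem~\ref{T:dinY} to the case $j=n$ and then evaluate the resulting $Y$-counts. When $j=n$, the second column of the relevant $\ssyt$s has length $n-j+1=1$ or $n-j=0$, so two of the four terms in Theorem~\ref{T:dinY} collapse to much simpler objects. Specifically, $Y^{a-1}_{n-i,1}(n-1)$ counts $\ssyt$s on a shape with a first column of length $n-i$ and a second column of length $1$, in which the single entry of the second column equals $a-1$; since that entry must also satisfy the row-weak-increase with the top of the first column and the no-entry-exceeding-$(n-1)$ bound, this is a one-parameter count. The terms $Y^{a-1}_{n-i,0}(n-1)$ and $Y^{a-1}_{n-i-1,0}(n-1)$ have an empty second column, so the condition ``$a-1$ appears in the second column'' is vacuously impossible and these terms vanish (for $a\ge 2$). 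Thus only the first and third terms of Theorem~\ref{T:dinY} survive, and the formula reduces to
\[
E_{n,i}(1,a)=\frac{Y^{a-1}_{n-i,1}(n-1)}{\binom n{i-1}\binom n{n-1}}-\frac{Y^{a-1}_{n-i-1,1}(n-1)}{\binom n{i}\binom n{n-1}}.
\]

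The next step is to compute $Y^{a-1}_{r,1}(m)$ for general $r$ and $m$ (here $m=n-1$). An $\ssyt$ on a column of length $r$ together with a single box in column two amounts to choosing a strictly increasing column $1\le c_1<c_2<\cdots<c_r\le m$ and a second-column entry $d$ with $c_1\le d\le m$ (row-weak-increase in the first row) and $d\ge 3$-style strictness only with $c_1$ — wait, more carefully: the second column box sits in the first row, so we need $c_1\le d$ and, since the second column has length $1<r$, no strictness below it; we need $d\le m$. Requiring $d=a-1$ forces $a-1\le m$ and counts the number of first columns with $c_1\le a-1$. This is $\binom{m}{r}-\binom{m-(a-1)}{r}$ if one counts by complementation, or directly one can split on the value of $c_1$. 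I expect this to evaluate, after the change of variables matching Theorem~\ref{T:dinY}'s conventions $\lambda\leftrightarrow z$, to a binomial expression of the form $\binom{i-1}{a-2}$-type terms. Plugging $Y^{a-1}_{n-i,1}(n-1)$ and $Y^{a-1}_{n-i-1,1}(n-1)$ into the reduced formula and simplifying the binomial ratios — using $\binom n{n-1}=n$ and the Pascal/absorption identities relating $\binom{n}{i-1}$, $\binom{n}{i}$, $\binom{i-1}{a-2}$, $\binom{i-1}{a-1}$ — should collect into the claimed $\frac{1}{n^2}+\frac{(n-i)\binom{i-1}{a-2}-\binom{i-1}{a-1}}{a\,n\,\binom n a}$. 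The $\frac{1}{n^2}$ constant term presumably emerges from the ``bulk'' part of the $Y$-count (the part not constrained by where $a-1$ sits), consistent with Corollary~\ref{C:uniform}, while the correction term comes from the boundary contribution.

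The main obstacle is the bookkeeping in the second step: getting the index conventions in $Y^{\beta}_{r,l}(m)$ exactly right (which row the short second column occupies, whether ``$\beta$ appears in the second column'' with $l=1$ means $d=\beta$, and how the reversal substitution $\lambda_{1,b}=n+1-z_{1,\cdot}$, $\lambda_{2,b}=n+1-z_{2,\cdot}$ from the proof of Theorem~\ref{T:cji} interacts with the parameter $a$). Once $Y^{a-1}_{r,1}(n-1)$ is pinned down as an explicit single binomial-sum, the remaining simplification is a routine (if slightly fiddly) binomial identity manipulation. I would double-check the final formula against small cases — e.g. $a=2$, where $\binom{i-1}{0}=1$ and $\binom{i-1}{1}=i-1$, giving $E_{n,i}(1,2)=\frac1{n^2}+\frac{(n-i)-(i-1)}{2n\binom n2}=\frac1{n^2}+\frac{n+1-2i}{n^2(n-1)}$, which should match the nearest-neighbour value $E_{j,i}(n)=\frac{j-i}{n\binom n2}$ from Theorem~\ref{T:cji} with $j=n$ only after accounting for the fact that $E_{n,i}(1,2)=E_{n,i}(n)$ — a good consistency check that the reversal conventions were handled correctly.
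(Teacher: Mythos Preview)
Your approach is correct and essentially identical to the paper's proof. You specialize Theorem~\ref{T:dinY} to $j=n$, observe that the two terms with second-column length $n-j=0$ vanish, and compute $Y^{a-1}_{r,1}(n-1)=\binom{n-1}{r}-\binom{n-a}{r}$ by complementation on the first column's minimum entry --- exactly what the paper does; the remaining binomial simplification is indeed routine, and your $a=2$ sanity check against Theorem~\ref{T:cji} confirms the conventions are handled correctly.
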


\begin{remark}
There are two simplification of Corollary~\ref{C:dni}.
\begin{romenumerate}
\item If $a>i+1$,
\(
\ds E_{n,i}(1,a)= \frac{1}{n^2}.
\)

\item If $i=n-1$,
\(
\ds E_{n,n-1}(1,a)= \frac{a-1}{n\binom{n}{2}}.
\)
\end{romenumerate}
\end{remark}

We will assume that $j>i$ and follow similar arguments as in Section~\ref{S:Two adjacent}. This time we will project down to the 
three-species system with sector 
${m_{x,y}}=(n-x-y,x,y)$ (note the slight difference with $m_{s,t}$ in Section~\ref{S:Two adjacent}). 
We will think of $j$ as one of the 3's and $i$ as one of the 2's.
Let $D_{y,x}(1,a)$ be the probability that there is a 3 in the first position and a 2 in position $a$ in this three 
species system. Then by the \pp{} we have 
\[
D_{y,x}(1,a):=\sum_{i=n-x-y+1}^{n-y}\sum_{j=n-y+1}^n E_{j,i}(1,a).
\] 
We can recover the $E_{j,i}(1,a)$'s from the $D_{y,x}(1,a)$'s by
 \be \label{e:recover}
E_{j,i}(1,a)=D_{n-j+1,j-i}(1,a)-D_{n-j,j-i+1}(1,a)
-D_{n-j+1,j-i-1}(1,a)+D_{n-j,j-i}(1,a).
\ee

We now claim the following.

\begin{lemma} \label{L:xylesseqn}
If $x+y \leq n$, then
\[
D_{y,x}(1,a)=\frac{Y_{y+x-1,y}^{a-1}(n-1)}{\binom n{y+x}\binom ny}.
\]
In the special case $x+y=n$, $D_{y,x}(1,a)$ becomes independent of $a$,
\[ 
D_{y,x}(1,a)=\frac{y(n-y)}{n(n-1)}
\]
\end{lemma}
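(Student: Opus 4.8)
The plan is to follow the multiline-queue strategy of the proof of Theorem~\ref{T:cji}, now keeping track of the configuration at position $a$ rather than position $2$. We work in the Ferrari--Martin chain $\ofm_{m_{x,y}}$, where the first row has $n-x-y$ occupied sites and the second row has $n-y$ occupied sites. By Theorem~\ref{T:mlqprob} and Theorem~\ref{T:bullypath}, $D_{y,x}(1,a)$ equals $1/\bigl(\binom{n}{y+x}\binom{n}{y}\bigr)$ times the number of multiline queues whose bully-path projection puts a $3$ in column $1$ and a $2$ in column $a$. First I would analyze exactly which local configurations in columns $1$ and $a$ force a $3$ in position $1$: since a $3$ appears at the bottom of column $1$ iff the entire column $1$ (both rows) is vacant, this pins down the first column completely as in display \eqref{E:que32}. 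Then a $2$ must appear in column $a$: this means column $a$ of row $2$ is occupied and is the endpoint of a bully path that started at a row-$2$ occupied site (equivalently, this row-$2$ occupied site was \emph{not} used by a bully path descending from row $1$, i.e.\ it has no row-$1$ occupied site waiting ``above'' it in the cyclic queueing order). Because column $1$ is entirely vacant, the bully path discipline simplifies: I would argue that the condition ``column $a$ carries a $2$'' translates precisely into an interlacing condition on the vacant positions of rows $1$ and $2$, namely that among the vacant slots the $(a-1)$-st one in the appropriate enumeration lies in row $2$ — this is where the parameter $\beta=a-1$ of $Y^\beta_{r,l}$ enters.

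Concretely, I would use the same change of variables $\lambda_{1,b}=n+1-z_{1,\cdot}$, $\lambda_{2,b}=n+1-z_{2,\cdot}$ as in the proof of Theorem~\ref{T:cji} to turn the pair of vacant-position sequences into an SSYT on two columns of lengths $(n-x-y)+?$ — after deleting the forced vacancy in column $1$ of row $1$, the column lengths become $y+x-1$ and $y$ — with entries at most $n-1$ (the deleted column and the restriction $z\ge 3$ together shave off two from the bound). The extra requirement that position $a$ hold a $2$ becomes exactly the statement that the value $a-1$ occurs in the shorter (second) column of the tableau, which is the definition of $Y^{a-1}_{y+x-1,\,y}(n-1)$. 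This yields $D_{y,x}(1,a)=Y^{a-1}_{y+x-1,y}(n-1)/\bigl(\binom{n}{y+x}\binom{n}{y}\bigr)$.

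For the boundary case $x+y=n$ there are no $3$'s at all, so the event ``position $1$ is a $3$'' is empty — wait, rather: when $x+y=n$, row $1$ is entirely vacant, every site projects to a $1$, $2$, or something with no $3$'s... I would instead note that with $x+y=n$ the first row is all vacant, so the projection never produces a $3$; hence I must re-examine: in fact when $x+y=n$ the species present are $\{1,2\}$ only if $n-x-y=0$, meaning there are no $1$'s either when... Let me restate: $m_{x,y}=(n-x-y,x,y)$, so $x+y=n$ gives $m=(0,x,y)$, no $1$'s, and row $1$ empty. Then a $3$ at position $1$ just needs column $1$ of row $2$ vacant, and a $2$ at position $a$ needs column $a$ of row $2$ occupied; since there are no bully paths from row $1$, the $\binom{n}{y}$ placements of row-$2$ occupied sites each give a fixed pattern, and the count of those with column $1$ vacant and column $a$ occupied is $\binom{n-2}{y-1}$, independent of $a$, giving $D_{y,x}(1,a)=\binom{n-2}{y-1}/\binom{n}{y}=y(n-y)/(n(n-1))$. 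This matches the general formula via the evaluation $Y^{a-1}_{n-1,y}(n-1)=\binom{n-2}{y-1}\binom{n}{y+x}\big/\binom{n}{n}$ once one checks the $r=m$ degenerate case of $Y$ in Section~\ref{S:Y}.

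The main obstacle I anticipate is the precise bookkeeping of the bully-path rule at position $a$: unlike position $2$, where the obstruction ``a first-class particle standing in queue'' is visible in a single adjacent column, here one must show that the \emph{net} effect of all row-$1$ occupied sites between columns $1$ and $a$ (in cyclic order) on whether column $a$'s row-$2$ occupied site gets ``stolen'' reduces cleanly to the single interlacing inequality encoded by $Y^{a-1}$. Establishing that reduction — essentially a Lindström--Gessel--Viennot / lattice-path argument showing the bully paths from row $1$ never interfere with the designated row-$2$ path up to column $a$ under the forced-vacancy hypothesis on column $1$ — is the crux, and everything after it is the routine hook-content bookkeeping already rehearsed in Theorem~\ref{T:cji}.
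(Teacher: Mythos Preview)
Your proposal has the right general strategy but contains a concrete error and misses the key simplification that makes the argument go through cleanly.

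\textbf{The error.} You assert that a $3$ at position $1$ forces column $1$ to be \emph{entirely} vacant (both rows). This is false. In a three-species system the projection gives a $3$ at column $c$ if and only if row~$2$ of the multiline queue is vacant at $c$; row~$1$ at $c$ is unconstrained. In display~\eqref{E:que32} of Theorem~\ref{T:cji} the vacancy in row~$1$, column~$1$ is forced not by the $3$ at position $1$ but by the \emph{adjacent} $2$ at position $2$ (a row-$1$ particle there would queue for the row-$2$ occupied site at column~$2$). When the $2$ sits at a general position $a$ this reasoning does not transfer, so your subsequent bookkeeping of column lengths (``deleting the forced vacancy in column~$1$ of row~$1$'') is based on a false premise.

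\textbf{The missing idea.} The obstacle you flag in your final paragraph --- turning the bully-path condition at a variable column $a$ into a single interlacing inequality --- is exactly what the paper avoids, and it does so by a one-line trick you do not use: translation invariance. Instead of anchoring the $3$ at column~$1$, the paper shifts so that the $2$ sits at column~$n$. Now the condition ``the row-$2$ occupied site at column $n$ is not consumed by any row-$1$ bully path'' becomes a purely linear (no wraparound) interlacing of vacancies in columns $1,\dots,n-1$: precisely the two-column SSYT condition with column lengths $x+y-1$ and $y$ and entries at most $n-1$. The $3$, now at column $n-a+1$, contributes only the constraint that row~$2$ is vacant there, which under the change of variables $\lambda_{2,b}=n-z_{2,y+1-b}$ says exactly that the value $a-1$ occurs in the second column --- the definition of $Y^{a-1}_{x+y-1,y}(n-1)$. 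No LGV argument is needed.

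Your handling of the boundary case $x+y=n$ is essentially correct (modulo the harmless slip of calling $\binom{n}{y}$ the number of ``row-$2$ occupied'' placements; you are counting the vacant placements, and $\binom{n-2}{y-1}/\binom{n}{y}=y(n-y)/(n(n-1))$ is right).
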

\begin{proof}
The total number of multiline queues with sector ${m_{y,x}}$ is $\binom n{y+x}\binom ny$ since we can choose as vacant positions 
any $x+y$ positions in the first row and any $y$ positions in the second row. Hence $\binom n{y+x}\binom ny\cdot D_{y,x}(1,a)$ is the number of multiline queues on two rows that projects to a 3 in the first position and a 2 in position $a$. They must look like
\[
\begin{array}{c c c c c c} .& \dots & \vac &  . & \dots & .\\ 
\vac & \dots & \occ & . & \dots & .\\ 
\hline 3 & \dots & 2 &  . & \dots & .\end{array}, 
\]
where the 2 is in position $a$. Because of translation invariance of the multiline queue process, we can renumber positions so the 2 is in column $n$ instead. As in the proof of Theorem~\ref{T:cji} we let $z_{c,d}$ be the position of the $c$'th vacant position in row $d$. Then we have the following situation
\[
\begin{array}{cccccccc}
  1 & \le z_{1,1} <  & \dots & < z_{1,x} <  & \cdots  & <  z_{1,x+y-2}  <  & z_{1,x+y-1} & \le n-1\\
&  &  & 
 \text{\begin{sideways}  $\le$   \end{sideways}} &  \dots & \text{\begin{sideways}  $\le$   \end{sideways}} &   
 \text{\begin{sideways}  $\le$   \end{sideways}} &  \\
& & 1 &  \le z_{2,1} <  & \dots &  <  z_{2,y-1} < & z_{2,y}, &\\
\end{array}
\]
\noindent where $z_{2,d}=n-a+1$ for some $d$. Using the change of variables $\lambda_{1,b}=n-z_{1,x+y-b}$ and $\lambda_{2,b}=n-z_{2,y+1-b}$ gives a bijection to SSYT counted by $Y_{x+y-1,y}^{a-1}(n-1)$.

Although the first formula in Lemma~\ref{L:xylesseqn} specializes to the second one when $x+y=n$, the proof does not follow the same way. In fact, the proof is easier since we have projected to a two-species system $(x,n-x)$. The corresponding one line multiqueue has one specified occupied position and one specified vacant position. The other $x-1$ occupied positions can be chosen arbitrarily, so $\binom{n}{x}D_{y,x}(1,a)=\binom{n-2}{x-1}$.
\end{proof}

\begin{proofof}{Theorem~\ref{T:dinY}}
Using \refL{L:xylesseqn} and \eqref{e:recover}, we obtain the required formula for $E_{j,i}(k)$ whenever $j>i$. 
\end{proofof}
%%%

\begin{proofof} {Corollary~\ref{C:uniform}}
As noted above $E_{i,j}(1,a)=E_{j,i}(1,n+2-a)$, so we could rephrase the corollary as $E_{j,i}(1,a)=\frac{1}{n^2}$ when $a\ge i+2+n-j$ for $j>i$. 
It is this statement that we will prove. 

By particle-hole symmetry Proposition~\ref{P:TASEPbasic}(ii),
$E_{j,i}(1,a)=E_{n-i+1,n-j+1}(1,a)$. The equation in Theorem \ref{T:dinY}
can thus be rewritten as

\be 
E_{j,i}(1,a) 
=\frac{Y^{a-1}_{j-1,i}(n-1)}{\binom n{j} \binom n{i}}
- \frac{Y^{a-1}_{j-1,i-1}(n-1)}{\binom n{j} \binom n{i-1}}
- \frac{Y^{a-1}_{j-2,i}(n-1)}{\binom n{j-1} \binom n{i}}
+ \frac{Y^{a-1}_{j-2,i-1}(n-1)}{\binom n{j-1} \binom n{i-1}}.
\ee

Define 
\[
g_{j,i}(a):=\frac{Y^{a-1}_{j-1,i}(n-1)}{\binom n{i} \binom n{j}}
- \frac{Y^{a-1}_{j-2,i}(n-1)}{\binom n{i} \binom n{j-1}}.
\]  
Corollary~\ref{C:Yconst} below states that both $Y^{a-1}_{j-1,i}(n-1)$ and $Y^{a-1}_{j-2,i}(n-1)$ are independent of $a$ if $i+2+n-j \leq a$. This is precisely the condition that $a$ satisfies.
Therefore we can replace $a$ in both by $n-1$. Therefore, using \refP{P:Y1} below,
\begin{align*}
g_{j,i}(a)\! &= \frac{j \; Y^{n-1}_{j-1,i}(n-1) - (n+1-j) \; Y^{n-1}_{j-2,i}(n-1)}
{n \; \binom {n-1}{j-1} \binom n i}\\
&\! =  \frac{j \left[ \binom{n-1}{i-1} \binom{n-1}{j-1}- \binom{n-2}{i-2} \binom{n}{j} \right] - (n+1-j)\left[ \binom{n-1}{i-1} \binom{n-1}{j-2}- \binom{n-2}{i-2} \binom{n}{j-1} \right]}
{n \; \binom {n-1}{j-1} \binom n i}\\
&=  \frac{\binom{n-1}{i-1} \binom{n-1}{j-1}}
{n \; \binom {n-1}{j-1} \binom n i}\\
&= \frac{i}{n^2}.
\end{align*}
Then $E_{j,i}(1,a)=g_{j,i}(a)-g_{j,i-1}(a)$ implies the result.
\end{proofof}

\begin{proofof} {Corollary~\ref{C:dni}}
We use Theorem \ref{T:dinY} in the case $j=n$. Notice that the second and fourth terms are zero because they correspond to SSYT with only one column. Therefore, we have
\[
E_{n,i}(1,a) = \frac{Y^{a-1}_{n-i,1}(n-1)}{n\binom n{i-1}}
- \frac{Y^{a-1}_{n-i-1,1}(n-1)}{n\binom n{i}}.
\]
Let us look at the first term. The second term is similar.
We have to count SSYT with two columns whose column lengths are $n-i$ and 1, maximum entry $n-1$ and where $a-1$ sits in the second column. Among all $\binom{n-1}{n-i}$ choices for the entries in the first column, we can't accept the ones where the minimal entry is greater than $a-1$. Thus,
\[
Y^{a-1}_{n-i,1}(n-1) = \binom{n-1}{n-i} - \binom{n-a}{n-i},
\]
which leads to
\[
E_{n,i}(1,a) = \frac{\binom{n-1}{n-i} - \binom{n-a}{n-i}}{n\binom n{i-1}}
- \frac{\binom{n-1}{n-i-1} - \binom{n-a}{n-i-1}}{n\binom n{i}}.
\]
After a few manipulations, this leads to
\[
E_{n,i}(1,a) = \frac 1{n^{2}}+\frac 1{n a \binom na} \left[ (n-i) \binom i{a-1} - (n+1-i) \binom{i-1}{a-1} \right],
\]
which is equivalent to what we wanted to prove. 
\end{proofof}

%%%%%%%%%%%%%%%%%%%%%%
\section{Constrained Semistandard Young Tableaux}
\label{S:Y}
In this section we compute the number of semistandard Young tableaux with two columns of lengths $r \geq l$, with maximum entry $m$ and such that a fixed $\beta$ lies in the second column. We denoted this $Y_{r,l}^{\beta}(m)$ in Section~\ref{S:Two far}. To that end, we introduce the following quantities. Let $X_{r}^{\alpha,\beta}$ be the number of SSYTs  of shape $r,r$, with entries $(\alpha,\beta)$ in the last row. Similarly, let $Z_{r,l}^{\alpha,\beta}(m)$ be the number of SSYTs  of shape $r,l$, with no entry exceeding $m$ such that the first row is 
 $(\alpha,\beta)$. We first give enumerative results for both these quantities.

\begin{lemma}\label{L:X}
For all $r\le\alpha\le\beta$ we have 
$X_{r}^{\alpha,\beta}=\binom{\beta}{r-1}\binom{\alpha-1}{r-1}-\binom{\beta-1}{r-2}\binom{\alpha}{r}$.
\end{lemma}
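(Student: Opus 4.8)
The plan is to peel off the bottom row, reduce the count to a classical ``dominated pair of subsets'' enumeration, evaluate that by the Lindstr\"om--Gessel--Viennot lemma (or an equivalent tail-swap bijection), and finally match the answer with the asserted closed form via Pascal's identity.

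First I would reduce. An SSYT of shape $(r,r)$ (two columns, each of height $r$) whose last row is $(\alpha,\beta)$ is the same as a filling of its top $r-1$ rows in which the left column is a strictly increasing sequence $1\le a_1<\dots<a_{r-1}\le\alpha-1$, the right column is a strictly increasing sequence $1\le b_1<\dots<b_{r-1}\le\beta-1$, and the rows weakly increase, i.e.\ $a_i\le b_i$ for $1\le i\le r-1$. (The hypothesis $r\le\alpha\le\beta$ guarantees that both columns can be filled and that the bottom row is itself weakly increasing, so no additional bound on the entries is needed.) Hence $X_r^{\alpha,\beta}$ equals the number of pairs $(S,T)$ with $S\in\binom{[\alpha-1]}{r-1}$, $T\in\binom{[\beta-1]}{r-1}$, $S=\{a_1<\dots<a_{r-1}\}$, $T=\{b_1<\dots<b_{r-1}\}$, and $a_i\le b_i$ for all $i$.

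Next I would encode $S$ and $T$ as monotone lattice paths, shifted so that the domination condition $a_i\le b_i$ for all $i$ is equivalent to the two paths being vertex-disjoint; the essential bookkeeping is to place the two sources and two sinks so that the diagonal path counts come out as $\binom{\alpha-1}{r-1}$ and $\binom{\beta-1}{r-1}$ while the two off-diagonal path counts are $\binom{\alpha-1}{r}$ and $\binom{\beta-1}{r-2}$. The Lindstr\"om--Gessel--Viennot lemma then gives
\[
X_r^{\alpha,\beta}=\binom{\alpha-1}{r-1}\binom{\beta-1}{r-1}-\binom{\alpha-1}{r}\binom{\beta-1}{r-2}.
\]
Equivalently, and perhaps more transparently, one can argue directly: a pair $(S,T)$ fails the domination condition at some index, and swapping the two paths past their first intersection point is a bijection between the ``bad'' pairs and \emph{all} pairs $(S',T')$ with $S'\in\binom{[\alpha-1]}{r}$ and $T'\in\binom{[\beta-1]}{r-2}$, so there are exactly $\binom{\alpha-1}{r}\binom{\beta-1}{r-2}$ bad pairs; subtracting from the unconstrained total $\binom{\alpha-1}{r-1}\binom{\beta-1}{r-1}$ yields the same identity. (One can also view the right-hand side as a flagged/bounded specialization of the hook--content count \eqref{hookcontent}.)

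Finally I would identify this with the stated formula. Writing $\binom{\beta}{r-1}=\binom{\beta-1}{r-1}+\binom{\beta-1}{r-2}$ and $\binom{\alpha}{r}=\binom{\alpha-1}{r-1}+\binom{\alpha-1}{r}$ and substituting gives
\[
\binom{\beta}{r-1}\binom{\alpha-1}{r-1}-\binom{\beta-1}{r-2}\binom{\alpha}{r}
=\binom{\alpha-1}{r-1}\binom{\beta-1}{r-1}-\binom{\alpha-1}{r}\binom{\beta-1}{r-2},
\]
which is the value of $X_r^{\alpha,\beta}$ obtained above. The one genuinely delicate step is fixing the source/sink placement (equivalently, the precise tail-swap bijection) so that the off-diagonal term is $\binom{\alpha-1}{r}\binom{\beta-1}{r-2}$ rather than a neighboring binomial; the reduction in the first step and the Pascal manipulation in the last are routine, and the small cases $r=1$ (second term vanishes) and $r=2$ can be verified directly as a sanity check.
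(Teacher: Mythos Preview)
Your argument is correct and takes a genuinely different route from the paper. The paper's proof is a short induction on $r$: it checks $r=1$ directly and then invokes the recursion
\[
X_{r+1}^{\alpha,\beta}=\sum_{a=r}^{\alpha-1}\sum_{b=a}^{\beta-1} X_r^{a,b},
\]
obtained by summing over the possible penultimate row $(a,b)$, and verifies that the closed form satisfies this double sum. By contrast, you peel off the bottom row once and for all, translate the remaining count into dominated pairs of $(r-1)$-subsets, and evaluate that by Lindstr\"om--Gessel--Viennot (equivalently the tail-swap involution), obtaining the determinantal form $\binom{\alpha-1}{r-1}\binom{\beta-1}{r-1}-\binom{\alpha-1}{r}\binom{\beta-1}{r-2}$, which you then match to the paper's expression via Pascal. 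Your approach is more conceptual in that it explains \emph{why} the answer is a $2\times 2$ determinant of binomials, and it avoids any inductive bookkeeping; the paper's induction is shorter to write and entirely self-contained, requiring only iterated binomial summation rather than an appeal to LGV or a bijection.
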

\begin{proof} Clearly true for $r=1$ since $X_{1}^{\alpha,\beta}=1$, as usual we define $\binom{n}{-1}=0$. The lemma then follows by induction over $r$ using the 
recursion $X_{r+1}^{\alpha,\beta}=\sum_{a=r}^{\alpha-1}\sum_{b=a}^{\beta-1} X_{r}^{a,b}$.
\end{proof}

\begin{lemma}\label{L:Z}
For all $\beta\le m$, $l\le r\le m$ we have
\[Z_{r,l}^{1,\beta}(m)=\sum_{1\le i\le j\le \beta}(-1)^{i+1}\binom{j-2}{i-2}\binom{\beta-j+i-1}{i-1}\ssyt_{r-i,l-i}(m-j).\]
\end{lemma}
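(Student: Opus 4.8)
The plan is to count SSYT of shape $r,l$ whose first row is $(1,\beta)$ by a sieve on the value of the left column. Since the first column starts with $1$ and is strictly increasing, its entries are $1=c_1<c_2<\dots<c_r$, and the constraint coming from the second column (which starts with $\beta$ and is strictly increasing with no entry exceeding $m$) only couples the two columns through the row-weak-inequalities $c_b \le d_b$ for $b=1,\dots,l$. The cleanest way to see the formula is to condition on which of the top entries of the left column lie in the ``dangerous'' range $\le \beta$: if exactly $i-1$ of $c_2,\dots,c_r$ are $\le\beta$ (so $c_1,\dots,c_i$ are all $\le\beta$ and $c_{i+1},\dots,c_r>\beta$), then the top $i$ rows are forced to interact with the second column near its start, and the remaining part of the tableau — rows $i+1$ through $r$ of the first column together with rows $i+1$ through $l$ of the second column, with all entries now exceeding $\beta$ — is a free SSYT on two columns of lengths $r-i$ and $l-i$ with entries in $\{\beta+1,\dots,m\}$, i.e. it is counted by $\ssyt_{r-i,l-i}(m-\beta)$. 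Shifting that to $\ssyt_{r-i,l-i}(m-j)$ indicates that the sum index $j$ is really recording the smallest second-column entry strictly greater than what the top $i$ rows can use, and $i,j$ are jointly constrained by $1\le i\le j\le\beta$.

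Concretely, I would set up the count as follows. Fix $i$ with $1\le i\le r$. We must choose the top $i$ entries $1=c_1<c_2<\dots<c_i$ of the left column, all $\le\beta$ (with $c_{i+1}>\beta$), and the top entries $\beta=d_1<d_2<\dots$ of the right column down to the first one, say $d_{i}$, that ``clears'' row $i$, subject to the weak row inequalities $c_b\le d_b$ for $b\le\min(i,l)$. Writing $j=d_i - \beta + i$ or some such affine reparametrisation converts the choice of the interleaved top portions into a lattice-path / ballot-type count, and I expect the number of ways to realise the top $i$ rows with ``first free second-column value $=\beta+(j-\text{something})$'' to be exactly $(-1)^{i+1}\binom{j-2}{i-2}\binom{\beta-j+i-1}{i-1}$ after the dust settles — the alternating sign being the tell-tale sign that this is most naturally proved by inclusion–exclusion rather than a direct bijection. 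So the honest route is: (1) establish the decomposition ``top $i$ rows'' $\times$ ``free tail $\ssyt_{r-i,l-i}(m-j)$'' with $j$ the cut value; (2) prove by inclusion–exclusion on the row-inequality constraints $c_b\le d_b$ (or equivalently by the Lindström–Gessel–Viennot / reflection argument for two-column tableaux, as already used to derive \eqref{hookcontent}) that the generating count for the top $i$ rows, graded by $j$, is the claimed binomial product with its sign; (3) sum over $i$ and $j$.

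An alternative, possibly smoother, execution: prove the identity by induction on $r$ (with $l$ trailing), mirroring the proof of \refL{L:X}. The base case $r=l=1$ is immediate since $Z_{1,1}^{1,\beta}(m)=1$ and the right side collapses to the $i=j=1$ term $\binom{-1}{-1}\binom{\beta-1}{0}\ssyt_{0,0}(m-1)=1$. For the inductive step one peels off the bottom row of the left column, ranging over its value, and the bottom row of the second column if $l=r$; this expresses $Z_{r,l}^{1,\beta}(m)$ as a double sum of $Z_{r-1,l-1}^{1,\beta}$-type quantities (with various arguments), and then one checks that the proposed closed form satisfies the same recursion — a finite binomial-coefficient manipulation using Vandermonde/hockey-stick identities and the recursion $\ssyt_{r,l}(m)=\sum \ssyt_{r-1,l-1}(\cdot)$ implicit in \eqref{hookcontent}. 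This reduces the whole thing to verifying a polynomial identity in $\beta$ and $m$.

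The main obstacle I anticipate is pinning down the exact affine change of variables between the ``cut value'' $j$ and the second-column data, and with it the precise form of the alternating summand: the factor $(-1)^{i+1}\binom{j-2}{i-2}\binom{\beta-j+i-1}{i-1}$ is delicate, and a sign error or an off-by-one in the binomial arguments will not show up until one tests small cases. I would guard against this by first verifying the formula by hand for $(r,l)=(2,1)$, $(2,2)$ and a few values of $\beta$ against a direct enumeration, then committing to whichever of the two proof strategies (inclusion–exclusion versus induction) makes that verified formula drop out most transparently; the induction route is likely to be the one that actually closes cleanly, since it sidesteps having to interpret the signs combinatorially at all.
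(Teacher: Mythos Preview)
Your inductive strategy is in the right spirit, but the specific decomposition you propose---peeling off the \emph{bottom} row---does not stay inside the $Z$ family: $Z_{r,l}^{\alpha,\beta}(m)$ is defined by its \emph{first} row, so removing the last row leaves an object constrained from above and below simultaneously, not another $Z$. The paper's proof instead peels off the \emph{second} row. Since the first row is the fixed $(1,\beta)$, the second row is any $(a,b)$ with $2\le a\le b$ and $b>\beta$; rather than sum over those, the paper writes it as an inclusion--exclusion:
\[
Z_{r,l}^{1,\beta}(m)=\ssyt_{r-1,l-1}(m-1)-\sum_{2\le a\le b\le \beta} Z_{r-1,l-1}^{a,b}(m),
\]
where the leading term counts all second rows with entries $\ge 2$ and the subtracted sum removes exactly those with $b\le\beta$. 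Combined with the shift $Z_{r,l}^{a,b}(m)=Z_{r,l}^{1,b-a+1}(m-a+1)$, this is a genuine recursion in the $Z^{1,\cdot}$ family with both column lengths dropping by one.

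The second idea you are missing is the \emph{ansatz} step: rather than trying to interpret the summand combinatorially (your Strategy~1) or to massage the recursion directly into the closed form, the paper posits $Z_{r,l}^{1,\beta}(m)=\sum_{i\le j\le\beta} g_{i,j}^{\beta}\,\ssyt_{r-i,l-i}(m-j)$ with unknown coefficients $g_{i,j}^{\beta}$, substitutes into the recursion above, and reads off a recursion for the $g$'s alone (independent of $r,l,m$). That coefficient recursion is then solved by a short induction on $i$. This cleanly separates the ``shape'' variables $r,l,m$ from the ``cut'' variables $i,j,\beta$, and sidesteps entirely the delicate affine change of variables you were worried about.
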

\begin{proof} 
We will keep $r,l,m$ fixed through out the proof and introduce the shorter notation $\ssy_i(j):=\ssyt_{r-i,l-i}(m-j)$.
First we have the obvious identity 
$Z_{r,l}^{\alpha,\beta}(m)=Z_{r,l}^{1,\beta-\alpha+1}(m-\alpha+1)$. 
Second we will use the recursion 
\be \label{E:Zrec}
Z_{r,l}^{1,\beta}(m)=\ssy_1(1)-\sum_{a=2}^{\beta}\sum_{b=a}^{\beta} Z_{r-1,l-1}^{a,b}(m) ,
\ee
which comes from studying which pairs of numbers $a\le b$ that cannot be the entries in the second row and subtract off those cases.

Now, let us use the ansatz 
\be \label{E:Zansatz}
Z_{r,l}^{1,\beta}(m)=\sum_{1\le i\le j\le \beta} g_{i,j}^{\beta}\cdot \ssy_i(j),
\ee
for some coefficients $g_{i,j}^{\beta}$. 
Plugging the ansatz \eqref{E:Zansatz} into the recursion \eqref{E:Zrec} translates, after a few steps, to 
\[
\sum_{1\le i\le j\le \beta} g_{i,j}^{\beta}\cdot \ssy_i(j)=\ssy_1(1)-
\sum_{2\le a\le b\le \beta} \sum_{1\le i\le j\le b-a+1} g_{i,j}^{b-a+1}\cdot \ssy_{i+1}(j+a-1).
\]
This forces $g_{1,1}^{\beta}=1$ and gives the recursion
\[
g_{s,t}^{\beta}=-\sum_{r=s-1}^{t-1}\sum_{u=r}^{r+\beta-t} g_{s-1,r}^u.
\] 
Induction over $s$ now easily proves 
\[
g_{s,t}^{\beta}=(-1)^{s+1}\binom{t-2}{s-2}\binom{\beta-t+s-1}{s-1}, 
\]
which is the desired coefficient.
\end{proof}

We now consider special values of $\beta$, where the formula for 
$Y_{r,l}^{\beta}(m)$ is simple.

\begin{proposition}\label{P:Y1} 
\begin{romenumerate}
\item 
$\ds
Y_{r,l}^{1}(m) = \ssyt_{r-1,l-1}(m-1).
$
\item 
\[
Y_{r,l}^{m}(m) = \binom{m}{l-1}\binom{m}{r}
-\binom{m-1}{l-2}\binom{m+1}{r+1}.
\]
\end{romenumerate}
\end{proposition}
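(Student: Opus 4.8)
The plan is to treat the two parts separately, since they correspond to the two extreme choices of $\beta$.

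For part (i), the constraint is that the value $1$ appears somewhere in the second column. Since entries are strictly increasing down each column and weakly increasing along rows, a $1$ in the second column forces the top-left and top-right cells of the SSYT to both equal $1$. First I would observe that this pins the entire first row, and then removing that first row leaves a two-column SSYT of column lengths $r-1 \geq l-1$ with all entries in $\{2,\dots,m\}$, which (after subtracting $1$ from every entry) is counted by $\ssyt_{r-1,l-1}(m-1)$. This gives a bijection and hence the claimed identity. This part is essentially immediate.

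For part (ii), the constraint is that the maximal value $m$ appears somewhere in the second column. I would proceed by complementary counting: $Y_{r,l}^{m}(m)$ equals the total number of two-column SSYT of shape $(r,l)$ with entries bounded by $m$, minus those in which $m$ does \emph{not} appear in the second column. The total is $\ssyt_{r,l}(m)$, given by \eqref{hookcontent}. For the subtracted term, if $m$ is absent from the second column, then the second column uses only entries in $\{1,\dots,m-1\}$; the first column may or may not contain $m$, but it can contain $m$ only in its bottom cell. I would split into the two cases according to whether the first column contains $m$: in the case it does not, we simply get $\ssyt_{r,l}(m-1)$; in the case it does, the bottom cell of the first column is forced to be $m$, and the remaining shape is a two-column SSYT of column lengths $r-1 \geq l$ with entries in $\{1,\dots,m-1\}$ satisfying the usual inequalities between the two columns (the row-weak-increase condition between the truncated first column and the full second column is automatic since we removed only the largest entry of column one), giving $\ssyt_{r-1,l}(m-1)$. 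Hence $Y_{r,l}^{m}(m) = \ssyt_{r,l}(m) - \ssyt_{r,l}(m-1) - \ssyt_{r-1,l}(m-1)$, and I would then substitute the hook-content values from \eqref{hookcontent} and simplify to the stated closed form $\binom{m}{l-1}\binom{m}{r} - \binom{m-1}{l-2}\binom{m+1}{r+1}$.

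The main obstacle is the final algebraic simplification in part (ii): expressing $\frac{r-l+1}{r+1}\binom{m}{r}\binom{m+1}{l} - \frac{r-l+1}{r+1}\binom{m-1}{r}\binom{m}{l} - \frac{r-l}{r}\binom{m-1}{r-1}\binom{m}{l}$ in the compact binomial form requires some care with Pascal-type identities and common denominators, though it is routine. A secondary point worth checking is the degenerate case $l = 0$ (a single column), where the second column is empty and $Y_{r,0}^{m}(m)$ should be $0$; the formula gives $\binom{m}{-1}\binom{m}{r} - \binom{m-1}{-2}\binom{m+1}{r+1} = 0$ with the convention $\binom{n}{-1}=0$ already in force, so the identity remains consistent, but I would note this explicitly to make the case analysis above valid at the boundary. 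Alternatively, for part (ii) one can use particle-hole/transpose symmetry on entries ($k \mapsto m+1-k$) to reduce to a statement about $1$ appearing in the first column, which may shorten the bookkeeping, but the complementary-counting route is the most direct.
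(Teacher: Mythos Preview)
Your argument for part (i) is exactly the paper's: the first row is forced to be $(1,1)$, and stripping it yields the bijection.

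For part (ii) your approach is correct but genuinely different from the paper's. The paper observes that $m$ can only sit in the bottom cell of the second column, lets $y$ be the entry of the first column in that same row, and decomposes the tableau as
\[
Y_{r,l}^{m}(m)=\sum_{y=1}^{m} X_{l}^{y,m}\,Z_{r-l+1,1}^{y,m}(m)
=\sum_{y=1}^{m}\binom{m-y}{r-l}\Bigl[\binom{m}{l-1}\binom{y-1}{l-1}-\binom{m-1}{l-2}\binom{y}{l}\Bigr],
\]
then evaluates the $y$-sum via Vandermonde to obtain the closed form directly. Your route---complementary counting followed by splitting on whether $m$ lies in the first column---gives $Y_{r,l}^{m}(m)=\ssyt_{r,l}(m)-\ssyt_{r,l}(m-1)-\ssyt_{r-1,l}(m-1)$ and then requires the algebraic simplification you flagged (which does go through; e.g.\ write $\binom{m+1}{l}=\binom{m}{l}+\binom{m}{l-1}$ and $\binom{m}{r}-\binom{m-1}{r}=\binom{m-1}{r-1}$ to collapse the expression). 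Your method is more self-contained, needing only the hook--content formula and Pascal identities, whereas the paper's decomposition via $X$ and $Z$ is what sets up the machinery used immediately afterwards in Theorem~\ref{T:Y2}; so the paper's choice is really motivated by what comes next rather than by economy for this proposition alone. One small point you should make explicit: in your Case~B, when $r=l$ the case is actually empty (the row condition at row $l$ would force $m\le m-1$), and correspondingly $\ssyt_{r-1,l}(m-1)=0$ by the convention in \eqref{hookcontent}, so the formula remains valid.
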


\begin{proof}
\begin{romenumerate}
\item The first row has to be $(1,1)$, and hence the minimum entry is two in the remainder of the SSYT.

\item $\beta=m$ can only be present in the second column in the last row. Let us sum over all possible entries in the first column at the same row. Using the definition of 
$X_{r}^{\alpha,\beta}$ and $Z_{r,l}^{\alpha,\beta}(m)$, we get
\[
Y_{r,l}^{m}(m) = \sum_{y=1}^{m} X_{l}^{y,m} Z_{r-l+1,1}^{y,m}(m).
\]
Since $Z_{r,1}^{\alpha,m}(m)$ is just the binomial coefficient $\binom{m-y}{r-l}$, we get
\begin{align*}
Y_{r,l}^{m}(m) &= \sum_{y=1}^{m} \binom{m-y}{r-l} \left( 
\binom m{l-1} \binom{y-1}{l-1} - \binom{m-1}{l-2}\binom{y}{l} \right), \\
&= \binom m{l-1} \sum_{y=1}^{m} \binom{m-y}{r-l} \binom{y-1}{l-1}- 
\binom{m-1}{l-2}\sum_{y=1}^{m} \binom{m-y}{r-l} \binom{y}{l}, \\
&= \binom m{l-1} \binom{m}{r} - 
\binom{m-1}{l-2} \binom{m+1}{r+1}.
\end{align*}
\end{romenumerate}
\end{proof}

\begin{theorem}
\label{T:Y2} 
For $m\ge r\ge l\ge 1$ and $1\le\beta < m$ we have 
\[
Y_{r,l}^{\beta}(m)=\sum_{1\le f\le e\le \beta} N_{e-1,f-1} \cdot \ssyt_{r-f,l-f}(m-e),
\]
where
\[
N_{e,f} = \frac{1}{e}\binom{e}{f+1}\binom{e}{f}
\]
 are the Narayana numbers.
\end{theorem}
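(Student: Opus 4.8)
The plan is to reduce the count $Y_{r,l}^{\beta}(m)$ to the quantities $Z$ and $X$ already analyzed, exactly as in the proof of Proposition~\ref{P:Y1}(ii), and then to simplify the resulting double sum using Lemma~\ref{L:X} and Lemma~\ref{L:Z}. Concretely, the entry $\beta$ must appear in the second column of our two-column SSYT; say it occupies the cell in row $k$ (so $l \le \text{length constraints force } k\le l$, with $\beta$ necessarily in the portion of the tableau where both columns are present). Summing over the position $k$ of $\beta$ in the second column and over the entry $y$ sitting to its left in the first column, the tableau splits along that row into a top piece of shape $(k,k)$ with last row $(y,\beta)$ — counted by $X_{k}^{y,\beta}$ — and a bottom piece of shape $(r-k, l-k)$ whose first row must be $\ge (y,\beta)$ entrywise and with entries not exceeding $m$. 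The bottom piece is counted by $Z_{r-k+1,\,l-k+1}^{\,y,\beta}(m)$ (shifting indices so the shared row is re-counted once), which by the identity $Z_{r,l}^{\alpha,\beta}(m)=Z_{r,l}^{1,\beta-\alpha+1}(m-\alpha+1)$ from the proof of Lemma~\ref{L:Z} becomes $Z_{r-k+1,\,l-k+1}^{\,1,\beta-y+1}(m-y+1)$. This gives the master formula
\[
Y_{r,l}^{\beta}(m) = \sum_{k} \sum_{y=1}^{\beta} X_{k}^{y,\beta}\, Z_{r-k+1,\,l-k+1}^{\,y,\beta}(m).
\]

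Next I would substitute the closed forms: Lemma~\ref{L:X} for $X_{k}^{y,\beta}$ and Lemma~\ref{L:Z} for the $Z$-term, the latter written in terms of $\ssy$-values $\ssyt_{r-k+1-i,\,l-k+1-i}(m-y+1-j)$ with the Lemma~\ref{L:Z} coefficients $(-1)^{i+1}\binom{j-2}{i-2}\binom{\beta-y+1-j+i-1}{i-1}$. Collecting the coefficient of a fixed basis element $\ssyt_{r-f,l-f}(m-e)$ — i.e. setting $f = k-1+i$ and $e = y-1+j$ — reduces the identity to a purely binomial claim: the triple sum over $(k,y,i,j)$ with $k-1+i=f$, $y-1+j=e$ of the product of the $X$-coefficient and the $Z$-coefficient equals the Narayana number $N_{e-1,f-1}=\frac1{e-1}\binom{e-1}{f}\binom{e-1}{f-1}$. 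I expect this to collapse after one or two applications of Vandermonde's convolution, together with the standard Narayana recurrence; alternatively one can verify it by checking it satisfies the same recursion in $\beta$ that $Y$ inherits from the recursion \eqref{E:Zrec} for $Z$, anchored by the base cases $\beta=1$ (Proposition~\ref{P:Y1}(i), where $N_{0,0}=1$ gives the single term $\ssyt_{r-1,l-1}(m-1)$) and consistency with Proposition~\ref{P:Y1}(ii) as $\beta\to m$.

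The main obstacle will be the bookkeeping in the binomial simplification: the $Z$-coefficients already carry a sign $(-1)^{i+1}$ and a product of two binomials, and after multiplying by the $X$-coefficient (itself a difference of two binomial products) one is faced with an alternating multi-sum that must telescope down to the manifestly positive Narayana number. I would try to organize this by first summing over the free index that does not appear in the $\ssy$-argument — likely the first-column entry $y$ (or equivalently the row index $k$) — using Vandermonde to kill one summation variable, and only then matching against $N_{e-1,f-1}$; if the direct approach is too unwieldy, the safer route is the induction on $\beta$ sketched above, since the recursion \eqref{E:Zrec} transfers cleanly to $Y$ and the Narayana numbers are characterized by exactly that shape of recursion with the stated initial condition.
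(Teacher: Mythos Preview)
Your proposal is correct and follows essentially the same route as the paper: the paper also starts from the decomposition
\[
Y_{r,l}^{\beta}(m)=\sum_{x}\sum_{y=1}^{\beta} X_{x}^{y,\beta}\,Z_{r-x+1,l-x+1}^{y,\beta}(m),
\]
substitutes Lemmas~\ref{L:X} and~\ref{L:Z}, makes the change of variables $f=x+i-1$, $e=y+j-1$, and then collapses the remaining two sums by two Vandermonde convolutions --- first over $y$, then over $x$ --- arriving at the Narayana number via the identity $\binom{e-1}{f-1}^{2}-\binom{e}{f}\binom{e-2}{f-2}=N_{e-1,f-1}$. Your anticipated order of summation and the expected number of Vandermonde applications match the paper exactly; the alternative induction on $\beta$ you sketch is not needed.
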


\begin{proof} 
Summing over in which row $x$ of the last column that contains $\beta$ and which number $y$ is in row  $x$ of the first column,
 we get the following recursion.
\begin{align*}
Y_{r,l}^{\beta}(m) &=\sum_{x=1}^l\sum_{y=1}^{\beta} X_{x}^{y,\beta}Z_{r-x+1,l-x+1}^{y,\beta}(m), \\
&=\sum_{x=1}^l\sum_{y=1}^{\beta} X_{x}^{y,\beta}Z_{r-x+1,l-x+1}^{1,\beta-y+1}(m-y+1),
\end{align*}
where we have used \refL{L:Z} in the second line.
This leads to
\begin{align*}
&Y_{r,l}^{\beta}(m) = \sum_{x=1}^l\sum_{y=1}^{\beta} \left[\binom{\beta}{x-1}\binom{y-1}{x-1} -\binom{\beta-1}{x-2}\binom{y}{x}\right]  \\
 &\times\sum_{1\le i\le j\le \beta-y+1}(-1)^{i+1}\binom{j-2}{i-2}\binom{\beta-y-j+i}{i-1} 
  \ssy_{x+i-1}(y+j-1),
\end{align*}
where we have reused the notation $\ssy_i(j)$ from Lemma~\ref{L:Z} and fixed $r,l,m$

Note that all except the $y$-sum are natural in the sense that we can replace the limits
by $\Z$. The terms outside of the specified limits will then be zero by definition of the binomial coefficient.

As a first step, replace $i$ by $f=x+i-1$ and $j$ by $e=y+j-1$ to get
\begin{align*}
&Y_{r,l}^{\beta}(m) = \sum_{x}\sum_{y=1}^{\beta} \left[\binom{\beta}{x-1}\binom{y-1}{x-1} -\binom{\beta-1}{x-2}\binom{y}{x}\right]  \\
 &\times\sum_f \sum_{e}(-1)^{f+x}\binom{e-y-1}{f-x-1}\binom{\beta-e+f-x}{f-x} 
  \ssy_{f}(e).
\end{align*}
We are now going to move the $x,y$ sums inside the $e,f$ sums
\begin{align*}
&Y_{r,l}^{\beta}(m) = \sum_f \sum_{e}\sum_{x}\sum_{y=1}^{\beta} (-1)^{f+x}
\left[\binom{\beta}{x-1}\binom{y-1}{x-1} -\binom{\beta-1}{x-2}\binom{y}{x}\right]  \\
 &\times \binom{e-y-1}{f-x-1}\binom{\beta-e+f-x}{f-x} 
  \ssy_{f}(e).
\end{align*}
Now the last binomial coefficient is zero unless $\beta \geq e+x-f$, and the one before that is zero unless $e-y \geq f-x$. Therefore, we can replace the upper limit of the $y$-sum by 
$e+x-f$. Now we are in a position to do the $y$-sums. We use the identity
\[
\sum_{m=0}^{n} \binom mj \binom{n-m}{k-j} = \binom{n+1}{k+1}, \text{ if $0 \leq j \leq k 
\leq n$,}
\]
to obtain
\begin{align*}
\sum_{y=1}^{e+x-f} \binom{e-y-1}{f-x-1} \binom{y-1}{x-1} &= \binom{e-1}{f-1}, \\
\sum_{y=1}^{e+x-f} \binom{e-y-1}{f-x-1} \binom{y}{x} &= \binom{e}{f}.
\end{align*}
Note that it does not matter if the sum runs to $e+x-f$ or $e$. Thereafter we are left with
\begin{align*}
&Y_{r,l}^{\beta}(m) = \sum_f \sum_{e}   \ssy_{f}(e)  \\
 &\times\sum_{x} (-1)^{f+x}  \binom{\beta-e+f-x}{f-x}
\left[\binom{\beta}{x-1}\binom{e-1}{f-1} -\binom{\beta-1}{x-2}\binom{e}{f}\right].
\end{align*}
Now the $x$-sum can be done by first using $\binom{n}{k}=(-1)^{k}\binom{k-n-1}{k}$ and then variants of the Chu-Vandermonde identity,
\begin{align*}
\sum_{x=1}^{f}(-1)^{x} \binom{\beta-e+f-x}{f-x}\binom{\beta}{x-1} 
&= (-1)^{f}\binom{e-1}{f-1}, \\
\sum_{x=2}^{f} (-1)^{x+1}\binom{\beta-e+f-x}{f-x} \binom{\beta-1}{x-2} 
&= (-1)^{f+1}\binom{e-2}{f-2}.
\end{align*}
Noting that
\[
\binom{e-1}{f-1}^{2}-\binom ef \binom{e-2}{f-2} = N_{e-1,f-1},
\]
the Narayana number, completes the proof.
\end{proof}

\begin{corollary} \label{C:Yconst} 
For $l \leq r$ and $l+m-r \leq \beta \leq m$, $Y_{r,l}^{\beta}(m)$ is independent of 
$\beta$.
\end{corollary}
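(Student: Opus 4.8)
The plan is to read off the claim directly from the closed formula for $Y_{r,l}^{\beta}(m)$ in Theorem~\ref{T:Y2}, by showing that increasing $\beta$ by one does not change the sum when $\beta$ is already large enough. Write
\[
Y_{r,l}^{\beta}(m)=\sum_{1\le f\le e\le \beta} N_{e-1,f-1}\,\ssyt_{r-f,l-f}(m-e).
\]
The only way the right-hand side can depend on $\beta$ is through the terms with $e=\beta$ (all other terms appear identically in $Y_{r,l}^{\beta}(m)$ and $Y_{r,l}^{\beta+1}(m)$). So it suffices to show that
\[
\sum_{f=1}^{\beta} N_{\beta-1,f-1}\,\ssyt_{r-f,l-f}(m-\beta)=0
\quad\text{whenever } l+m-r\le \beta\le m-1,
\]
and similarly that the new $e=\beta+1$ terms appearing in $Y_{r,l}^{\beta+1}(m)$ vanish in the same range; in fact both follow from the single statement that, for $l+m-r\le e\le m$, the inner sum $\sum_{f} N_{e-1,f-1}\,\ssyt_{r-f,l-f}(m-e)$ is $0$.

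The first key step is therefore to pin down, for a fixed $e$ in the stated range, which values of $f$ contribute: by \eqref{hookcontent}, $\ssyt_{r-f,l-f}(m-e)$ is nonzero only when $m-e\ge r-f\ge l-f\ge 0$, i.e.\ $f\le l$ and $f\ge r-(m-e)=r-m+e$. Since $e\ge l+m-r$ gives $r-m+e\ge l$, the two constraints force $f=l$ exactly (when $e>l+m-r$ there is no valid $f$ at all, and the inner sum is empty hence zero). So the whole inner sum reduces to the single term with $f=l$, namely $N_{e-1,l-1}\,\ssyt_{r-l,0}(m-e)$, and $\ssyt_{r-l,0}(m-e)=\binom{m-e}{r-l}$ by \eqref{hookcontent} with $k=0$ (using $\frac{r-l+1}{r-l+1}=1$). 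Thus the claim comes down to showing this leftover term is also zero; the remaining case is $e=l+m-r$, where $m-e=r-l$ and $\binom{m-e}{r-l}=\binom{r-l}{r-l}=1\neq0$, so naively there is a surviving term $N_{l+m-r-1,\,l-1}$.

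The resolution — and the one point that needs a little care rather than being routine — is that the potential trouble value $e=l+m-r$ is exactly the smallest $e$ for which a contribution could survive, so in comparing $Y_{r,l}^{\beta}(m)$ with $Y_{r,l}^{\beta+1}(m)$ the relevant range is $\beta\ge l+m-r$, meaning $\beta\ge e$ always holds and the term $N_{e-1,l-1}\binom{m-e}{r-l}$ with $e=l+m-r$ is present in \emph{both} sums; it never gets added or dropped as $\beta$ increases past the threshold. Concretely: once $\beta\ge l+m-r$, every term of $Y_{r,l}^{\beta}(m)$ with $e\le l+m-r$ is stable, and every term with $e>l+m-r$ is zero by the first key step, so $Y_{r,l}^{\beta+1}(m)-Y_{r,l}^{\beta}(m)$ telescopes to a sum of zero terms. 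I expect the main obstacle to be bookkeeping the inequalities cleanly — making sure the degenerate edge case $e=l+m-r$ is handled by the ``stable term'' argument rather than mistakenly treated as a vanishing term — but no new identity beyond \eqref{hookcontent} and the shape of Theorem~\ref{T:Y2} is needed.
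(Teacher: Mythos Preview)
Your approach is the paper's: both invoke \refT{T:Y2} and use that $\ssyt_{r-f,l-f}(m-e)=0$ unless $e\le m-r+f\le m-r+l$, so the upper limit $\beta$ on the $e$-sum can be replaced by $l+m-r$ and the dependence on $\beta$ disappears. One expository wobble worth cleaning up: your opening claim that the dependence on $\beta$ is ``through the terms with $e=\beta$'' is off by one (those terms lie in both $Y^{\beta}$ and $Y^{\beta+1}$; the new terms have $e=\beta+1$), and the stronger statement you then set out to prove --- that the inner sum vanishes for all $l+m-r\le e\le m$ --- is false at $e=l+m-r$, which is why you have to backtrack in the last paragraph. The paper avoids this detour by arguing directly that every nonzero summand has $e\le l+m-r\le\beta$.
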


\begin{proof}
We use the formula for $Y^{\beta}_{r,l}(m)$ in \refT{T:Y2}.
For clarity, we rewrite the formula and explicitly include the binomial coefficients.
\[
Y_{r,l}^{\beta}(m)\! =\! \! \!\sum_{1\le f\le e\le \beta}  \frac{r-l+1}{(e-1)(r-f+1)}
\binom{e-1}{f}\binom{e-1}{f-1} \binom{m-e}{r-f}\binom{m-e+1}{l-f},
\]
First of all, $f \leq l$ by the last binomial coefficient.
More importantly, $e \leq m-r+f$, by the penultimate one. Using the constraint on $\beta$, 
we get that
\[
m+f-r \leq  m+l-r \leq \beta.
\]
Therefore we can replace the upper limit of $e$ by $m+l-r$ in the summation.
\end{proof}

\begin{remark} There is a nice combinatorial way of proving Corollary \ref{C:Yconst} directly. Given $l+m-r < \beta \leq m$ we can define a bijection
from $Y_{r,l}^{\beta}(m)$ to $Y_{r,l}^{\beta-1}(m)$ as follows. Assume $\lambda\in Y_{r,l}^{\beta}(m)$ with $\lambda_{2,b}=\beta$ for some row 
$b\le l$.
There are two cases. 
If  $\lambda_{2,b-1}=\beta-1$ then $\lambda$ is mapped to itself.
If  $\lambda_{2,b-1}<\beta-1$ then we map $\lambda$ to the partition $\lambda'\in Y_{r,l}^{\beta-1}(m)$ which differs from $\lambda$ only by
$\lambda'_{2,b}=\beta-1$. This can always be done since $\lambda_{1,b}\le\lambda_{1,l} \le l+m-r< \beta$ by assumption. The inverse of the
bijection is defined in the same way.
\end{remark}

\section{Three point correlations} \label{S:threept}
In line with \cite{AAV}, we also present results for three point correlations. Just as in \cite[Section 7.4]{AAV}, we will only prove results for three consecutive particles. Without loss of generality, we calculate the probability of particles of  three different species being at sites 1, 2 and 3. What differentiates these cases is the relative order of species. 
For convenience we will denote the lowest of these species by $i$, the next by $j$, and the highest by $k$.
It is then clear that there are $3!$ different possibilities, the permutations of the letters $i,j,k$. Let $\pi$ be any such permutation. We fix $n$ and denote $E_{\pi}$ to be the probability in the stationary distribution of the TASEP to have the $\pi_{i}$'th particle at site $i$ for $i=1,2,3$. 

The proofs will sum over the number of SSYT with three columns of lengths $a\ge b\ge c$. We will repeatedly use the following formula that is easy to 
deduce from the hook-content formula (see \cite[Corollary  7.21.4]{EC2})
\be \label{hookcontent3}
\ssyt_{a,b,c}(m)=
\begin{cases} 
\frac{(a-b+1)(a-c+2)(b-c+1)}{(a+1)(a+2)(b+1)} & \\
\times \binom{m}{a}\binom{m+1}{b}\binom{m+2}{c}, & 
\text{ if $m\ge a\ge b\ge c\ge 0$} \\
0, & \text { otherwise.}
\end{cases}
\ee 
We will also use the convention that $\ssyt_{0,0,0}(m)=1$ for all $m\ge 0$. 

The proofs in this section will follow the same lines as the proof of Theorem \ref{T:cji}. Here we will project to the four species system 
$\om_{{m}_{r,s,t}}$, where ${m}=(r,s,t,n-r-s-t)$. That is, to a system with $r$ 1s, $s$ 2s, $t$ 3s and $n-r-s-t$ particles of class 4. 
The case with the most uniform answer is the decreasing case, corresponding to the permutation $321$.

\begin{theorem} \label{T:321corr}
For $i<j<k$,
\[
E_{k,j,i} = \frac{6(j-i)(k-i)(k-j)}{n^{3}(n-1)^{2} (n-2)}.
\]
\end{theorem}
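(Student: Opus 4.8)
Following the pattern established for Theorem~\ref{T:cji} and Lemma~\ref{L:xylesseqn}, I would use the projection principle to reduce the computation of $E_{k,j,i}$ in $\om_n$ to a counting problem in the four-species system $\om_{m_{r,s,t}}$ with $m=(r,s,t,n-r-s-t)$. Concretely, set
\[
U_{r,s,t} := \P(w_1=4,\ w_2=3,\ w_3=2)\quad\text{in }\om_{m_{r,s,t}},
\]
so by the projection principle $U_{r,s,t}=\sum_{i>r}\sum_{j>r+s}\sum_{k>r+s+t} E_{k,j,i}(n)$ (with $i$ ranging in the ``2-block,'' $j$ in the ``3-block,'' $k$ in the ``4-block''). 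A three-fold inclusion–exclusion then recovers $E_{k,j,i}$ from the $U$'s, exactly as $E_{j,i}$ was recovered from $T_{s,t}$. Since the double sum $\sum\sum (j-i)$ collapsed nicely in Theorem~\ref{T:cji}, the analogous triple sum $\sum_i\sum_j\sum_k (j-i)(k-i)(k-j)$ should collapse to a polynomial in $r,s,t,n$; so the whole theorem reduces to establishing a closed product formula for $U_{r,s,t}$ (times $\binom{n}{r}\binom{n}{r+s}\binom{n}{r+s+t}$, the number of multiline queues of that type).

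\textbf{Counting the multiline queues.} The heart of the matter is counting multiline queues on three rows that project under the bully-path map $B$ to a word beginning $4,3,2,\dots$. As in the earlier proofs, a word starting $4$ in position $1$ forces position $1$ to be vacant on all three rows; a $3$ in position $2$ forces position $2$ to be vacant on row $1$ and on row $2$ but \emph{may or may not} carry an $\occ$ on row $1$ in the ``queue'' sense; and a $2$ in position $3$ imposes a further constraint linking the rows. Tracking the bully paths carefully, the admissible configurations should split (by whether and where first-class particles sit ``in queue'' above positions $2$ and $3$) into a small number of sub-cases, each of which — after the change of variables $\lambda_{a,b}=n+1-z_{a,n-(\text{row length})+1-b}$ turning vacant-position sequences into partition rows — becomes a count of three-column SSYT constrained by which values appear in the second and third columns. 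These are exactly the quantities built from $\ssyt_{a,b,c}(m)$ in \eqref{hookcontent3}, possibly together with $Y$-type constraints of the kind studied in Section~\ref{S:Y}. Summing the sub-cases and simplifying the resulting binomial sums (via Chu–Vandermonde / Narayana-number identities, as in Theorem~\ref{T:Y2}) should yield
\[
\binom{n}{r}\binom{n}{r+s}\binom{n}{r+s+t}\,U_{r,s,t}
= \binom{n}{r}\binom{n}{r+s}\binom{n}{r+s+t}\cdot\frac{s\,t\,(\cdots)}{n^3(n-1)^2(n-2)},
\]
with the numerator a product matching the telescoped triple sum.

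\textbf{The main obstacle.} I expect the difficult step to be the bookkeeping of the bully-path sub-cases for three rows: with two rows (Theorem~\ref{T:cji}, Lemma~\ref{L:xylesseqn}) there were only one or two configurations to consider, but with three rows the interactions between first-class particles queued above position $2$ versus position $3$, and second-class particles queued above position $3$, proliferate, and one must argue carefully that the forced structure is as claimed and that no admissible configuration is double-counted. Once the sub-cases are correctly enumerated as constrained three-column SSYT, the remaining work is a (lengthy but routine) binomial-coefficient simplification of the type already carried out in Section~\ref{S:Y}; the appearance of $(j-i)(k-i)(k-j)$ and the denominator $n^3(n-1)^2(n-2)$ strongly suggests everything telescopes, since the denominator is exactly $|\ofm_{m_{r,s,t}}|$-normalization times the ``falling'' factors $n(n-1)(n-2)$ one picks up from three rows of vacant-position constraints. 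As a consistency check along the way, specializing to $t=0$ or $s=0$ should reduce to the two-row counts already proved, and summing $E_{k,j,i}$ over $n$ should reproduce the three-point asymptotics of \cite{AAV}.
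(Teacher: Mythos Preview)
Your approach is exactly the paper's: project to the four-species system $(r,s,t,n-r-s-t)$, compute $T_{r,s,t}=\P(w_1=4,w_2=3,w_3=2)$ by counting three-row multiline queues, and match it against the triple sum of the claimed $E_{k,j,i}$.

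Where your proposal misfires is in the anticipated difficulty. For the \emph{decreasing} pattern $432$ there is no case split: a $4$ at position~1 forces all three rows vacant there; a $3$ at position~2 then forces rows~1 and~2 vacant and row~3 occupied at position~2 (otherwise a bully path from row~1 or~2 would claim that $\occ$ and produce a $1$ or $2$); and a $2$ at position~3 forces row~1 vacant, rows~2 and~3 occupied there. So there is a \emph{single} admissible local configuration, and the remaining freedom is counted by one three-column SSYT number, $\ssyt_{n-r-3,\,n-r-s-2,\,n-r-s-t-1}(n-3)$, directly from \eqref{hookcontent3}. No $Y$-type constraints, no Chu--Vandermonde, no Narayana identities are needed; the simplification is a few lines. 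The proliferation of sub-cases you worry about is real, but it occurs for the patterns $324$ and $342$ (Theorems~\ref{T:213corr} and~\ref{T:231corr}, each with four configurations), not for $432$. Your instinct that ``specializing to $t=0$ or $s=0$ should reduce to two-row counts'' is correct and is exactly the boundary check the paper performs.
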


\begin{proof} 
Let $T_{r,s,t}=\P(w_1=4,w_2=3,w_3=2)$ be the probability at stationarity in the Markov chain  $\om_{{m}_{r,s,t}}$ of having the first positions occuopied by $4,3,2$. We will compute $T_{r,s,t}$ in two different ways. First by the projection principle we have for all $0\le r,s,t<n$ with $r+s+t\le n$, 
\be \label{Trst}
T_{r,s,t}=\sum_{k=r+s+t+1}^n\sum_{j=r+s+1}^{r+s+t}\sum_{i=r+1}^{r+s} E_{k,j,i}.
\ee
We can evaluate the sum  explicitly by substituting the value of the summand. Thus it suffices to prove that 
\be\label{E:321}
T_{r,s,t}=\!\! \sum_{k,j,i}  \frac{6(j-i)(k-i)(k-j)}{n^{3}(n-1)^{2} (n-2)}=\frac{st(s+t)(n-r)(n-r-s)(n-r-s-\! t)}{n^{3}(n-1)^{2} (n-2)}. 
\ee
Note that we can, by inclusion-exclusion, obtain the $E_{k,j,i}$'s from $T_{r,s,t}$'s. 

Second we can compute $T_{r,s,t}$ by counting the number of multiline queues that gives a word starting with $4,3,2$. The only possibility is
\be \label{E:que432}
\begin{array}{c c c c c c} \vac & \vac & \vac  & . & \dots & .\\ 
\vac & \vac & \occ & . & \dots & .\\ 
\vac & \occ &  \occ & . & \dots & .\\
\hline 4 &3 & 2 &  . & \dots & .\end{array}.
\ee
No particles may be queueing in the beginning of this multi line queue and thus we get the following set of inequalities. Let $z_{x,y}$ is the position of the $x$'th vacant position in row $y$.  
\[
\begin{array}{cccccccc}
4  \le z_{1,4} < & \dots & <z_{1,s+3}< & \cdots  & < z_{1,s+t+2}< & \cdots & < z_{1,n-r} & \le n\\
   & & \text{\begin{sideways} $\le$  \end{sideways}} &    \dots  & \text{\begin{sideways} $\le$  \end{sideways}} &  $\dots$ & 
  \text{\begin{sideways} $\le$  \end{sideways}} &  \\ 
 &4 & \le z_{2,3}<  & \dots  & <z_{2,t+2}< & \cdots & < z_{2,n-r-s} & \le n\\
 &&&&  \text{\begin{sideways} $\le$  \end{sideways}} &    \dots & \text{\begin{sideways} $\le$  \end{sideways}} &  \\
 && & 4 & \le z_{3,2} < & \cdots & < z_{3,n-r-s-t} & \le n.\\
\end{array}
\]
As in the proof of Theorem~\ref{T:cji} the number of possible values of the $z_{x,y}$ is counted by the number of semistandard Young tableaux, here
\[
\ssyt_{n-r-3,n-r-s-2,n-r-s-t-1}(n-3).
\]
The total number of multiline queues is $\binom{n}{r}\binom{n}{r+s}\binom{n}{r+s+t}$, since we may choose the
$r$ occupied positions in the first row, the $r+s$ occupied positions in the second row and the $r+s+t$ occupied positions in the third row in all possible ways. Thus the probability $T_{r,s,t}$ is the number of SSYT divided by this product of binomials. Using \eqref{hookcontent3} we get

\begin{align*}
T_{r,s,t}=&\frac{\binom{n-3}{r}\binom{n-2}{r+s}\binom{n-1}{r+s+t}}{\binom{n}{r}\binom{n}{r+s}\binom{n}{r+s+t}}\frac{st(s+t)}{(n-r-2)(n-r-1)(n-r-s-1)}\\
=&\frac{(n-r)(n-r-s)(n-r-s-t)st(s+t)}{n(n-1)(n-2)n(n-1)n},
\end{align*}
which is equal to \eqref{E:321}. 

To complete the proof, one needs to check that also the special cases $r=0, s=0$ and $t=0$ also satisfy the identity. In principle, these need to be done separately because these will involve projections to multiline queues with less than 4 queues. When $s$ or $t$ are zero, $T_{r,s,t}$ is identically zero as needed. The case of $r=0$ needs to be checked separately and this can be done just as in the special case of Lemma~\ref{L:xylesseqn}.
\end{proof}

The next case we consider is the permutation $213$. This also gives us the answer for the permutation $132$ by particle-hole symmetry Proposition~\ref{P:TASEPbasic}(ii), $E_{j,i,k}=E_{n+1-k,n+1-i,n+1-j}$.

\begin{theorem} \label{T:213corr}
For $i<j<k$,
\[
E_{j,i,k} = \begin{cases}
\ds \frac{2(j-i)}{n^3 (n-1)}, & k>j+1, \\
\\
\ds \frac{2(j-i)}{n(n-1)}
\left( \frac{1}{n^2} + \frac{j(n-j)}{n^2(n-1)} \right) & \\
\\
\ds \quad + \frac{2j(j-1)(n-j)}{n^3(n-1)^2(n-2)} & k=j+1.
\end{cases}
\]
\end{theorem}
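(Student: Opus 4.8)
The plan is to follow the method used for \refT{T:321corr} and \refT{T:cji}. First I would apply the projection principle to the four--species chain $\om_{m_{r,s,t}}$ with $m_{r,s,t}=(r,s,t,n-r-s-t)$: regard the lowest tracked class $i$ as a $2$, the middle class $j$ as a $3$, and the top class $k$ as a $4$. Setting $T_{r,s,t}:=\P(w_1=3,w_2=2,w_3=4)$ at stationarity, the projection principle gives
\[
T_{r,s,t}=\sum_{i=r+1}^{r+s}\ \sum_{j=r+s+1}^{r+s+t}\ \sum_{k=r+s+t+1}^{n} E_{j,i,k}(n),
\]
and, just as in the two--point case and in \eqref{Trst}, each $E_{j,i,k}$ is recovered from the $T_{r,s,t}$ by an inclusion--exclusion in the three cut locations $r$, $r+s$, $r+s+t$ (a signed sum of $2^3$ terms $T_{r',s',t'}$ with $r'\in\{i-1,i\}$, $r'+s'\in\{j-1,j\}$, $r'+s'+t'\in\{k-1,k\}$). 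Hence it suffices to produce a closed form for $T_{r,s,t}$; matching it against the claimed formula for $E_{j,i,k}$ is then a routine polynomial identity after substituting into the triple sum.

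Second, I would compute $T_{r,s,t}$ by enumerating the multiline queues on three rows (there are $\binom nr\binom n{r+s}\binom n{r+s+t}$ of them) whose bully--path projection begins with $3,2,4$. Tracing the bully paths: $w_3=4$ forces $C_{3,3}=\vac$; $w_1=3$ forces a fresh row--$3$ path to terminate at column $1$, hence $C_{3,1}=\occ$ and $C_{1,1}=C_{2,1}=\vac$ so that no higher row bullies it; and $w_2=2$ forces a fresh row--$2$ path at column $2$, hence $C_{2,2}=\occ$, $C_{3,2}=\occ$, and $C_{1,2}=\vac$, so that no first--class particle is queued to bully the $\occ$ at $(2,2)$. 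This pins down the first three columns and leaves the remaining entries free, subject to the usual left--to--right inequalities among the vacant positions $z_{x,y}$ of each row. As for the ``$23$'' pattern in the proof of \refT{T:cji}, the count of $T_{r,s,t}$ may split according to whether a lower--class particle can sit in queue just before the start of the queue; this, together with the inclusion--exclusion, is the source of the case distinction in the statement.

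Third, in each case I would convert the description of the allowed $\vac/\occ$ patterns of the three rows into a count of semistandard Young tableaux with at most three columns, using the change of variables from the proofs of \refT{T:cji} and \refT{T:321corr}, and evaluate it with \eqref{hookcontent3} (and \eqref{hookcontent}, together with $\ssyt_{0,0,0}(m)=1$, for degenerate columns). The generic family contributes a product of three binomial coefficients which, after dividing by $\binom nr\binom n{r+s}\binom n{r+s+t}$, collapses to a single rational function; carrying it through the inclusion--exclusion yields $E_{j,i,k}=2(j-i)/\bigl(n^3(n-1)\bigr)$ whenever $k>j+1$. When $k=j+1$, several terms of the inclusion--exclusion involve a degenerate species--$3$ block (i.e. $t=0$, hence $T=0$) so the cancellation is incomplete, and a residual SSYT sum survives and produces the correction $2j(j-1)(n-j)/\bigl(n^3(n-1)^2(n-2)\bigr)$, combined with the two--adjacent--particle value $\frac1{n^2}+\frac{j(n-j)}{n^2(n-1)}$ from \refT{T:cji} in the way the formula displays. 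Finally, the degenerate parameter values $r=0$, $s=0$, $t=0$ and an empty species--$4$ block must be handled separately, exactly as in \refT{T:321corr}, since they correspond to multiline queues with fewer rows; each is checked directly.

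The step I expect to be the main obstacle is the second one: correctly characterizing which multiline queues on the ring project to $3,2,4$ at the first three sites, in particular controlling bully paths that wrap around the cylinder and isolating the extra family of configurations that contributes precisely when $k=j+1$. Once that combinatorial description is secured, the rest is bookkeeping with binomial coefficients of exactly the kind already carried out for \refT{T:cji} and \refT{T:321corr}. (The permutation $132$ then follows with no extra work from particle--hole symmetry, Proposition~\ref{P:TASEPbasic}(ii).)
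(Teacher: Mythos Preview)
Your approach matches the paper's. One correction at the step you flagged as the obstacle: the first three columns are not fully pinned by the bully-path analysis --- the entries $C_{1,3}$ and $C_{2,3}$ are each free to be $\vac$ or $\occ$, so there are four local configurations, and after imposing the no-queueing inequalities on the remaining columns the count is
\[
\sum_{x=0}^{1}\sum_{y=0}^{1}\ssyt_{n-r-2-x,\,n-r-s-1-y,\,n-r-s-t-1}(n-3).
\]
This four-way split is present for \emph{all} $r,s,t$ and is not the source of the case distinction $k>j+1$ versus $k=j+1$ in the statement; that arises solely from the $t=0$ degeneracy in the inclusion--exclusion, exactly as you identify in your third paragraph. (The paper avoids carrying out the eight-term inclusion--exclusion explicitly: it instead sums the claimed $E_{j,i,k}$ over the box $r<i\le r+s$, $r+s<j\le r+s+t$, $r+s+t<k\le n$ with a computer algebra system and checks that this matches the MLQ count --- an equivalent verification.)
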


\begin{proof}
This time we define $T_{r,s,t}=\P(w_1=3,w_2=2,w_3=4)$. By the projection principle we can first use
\be
T_{r,s,t}=\sum_{k=r+s+t+1}^n\sum_{j=r+s+1}^{r+s+t}\sum_{i=r+1}^{r+s} E_{j,i,k}. 
\ee

Summing over the expression we want to prove we obtain, using a computer algebra package,
\be\label{E:213}
\textstyle T_{r,s,t}=\frac{s(n-r-s-t)(n^2st+n^2t^2+2nrt+nrs+ns^2-ns-nt-nr-nt^2+2rs+2r^2)}{n^3(n-1)^2(n-2)}.
\ee

Second, we need to count the number of multi line queues that makes the TASEP word start with 324
There are four possible different configurations

\begin{center}
\begin{tabular}{c c}
$\begin{array}{c c c c l} 
\vac & \vac & \vac  & . & \dots \\ 
\vac & \occ & \vac & . & \dots \\ 
\occ & \occ &  \vac & . & \dots \\
\hline 3 &2 & 4 &  . & \dots \end{array}$, \quad\quad & 
$\begin{array}{c c c c l} 
\vac & \vac & \occ  & . & \dots \\ 
\vac & \occ & \vac & . & \dots \\ 
\occ & \occ &  \vac & . & \dots \\
\hline 3 &2 & 4 &  . & \dots \end{array}$, \\
\\
$\begin{array}{c c c c l} 
\vac & \vac & \vac  & . & \dots \\ 
\vac & \occ & \occ & . & \dots \\ 
\occ & \occ &  \vac & . & \dots \\
\hline 3 &2 & 4 &  . & \dots \end{array}$, \quad\quad &
$\begin{array}{c c c c l} 
\vac & \vac & \occ  & . & \dots \\ 
\vac & \occ & \occ & . & \dots \\ 
\occ & \occ &  \vac & . & \dots \\
\hline 3 &2 & 4 &  . & \dots \end{array}$.
\end{tabular}

\end{center}%\ee

These are, by arguments similar to above, counted by 
\be
\sum_{x=0}^1\sum_{y=0}^1 \ssyt_{n-r-2-x,n-r-s-1-y,n-r-s-t-1}(n-3).
\ee
Thus, using \eqref{hookcontent3}, we get
\begin{align*}
&\binom{n}{r}\binom{n}{r+s}\binom{n}{r+s+t} T_{r,s,t}\\
=&\sum_{x=0}^1\sum_{y=0}^1 \binom{n-3}{r+x-1}\binom{n-2}{r+s+y-1}\binom{n-1}{r+s+t} \\
& \times \frac{(s+y-x)(t-y+1)(s+t-x+1)}{(n-r-x-1)(n-r-x)(n-r-s-y)}.\\
\end{align*}
Performing the sums and simplifying, $T_{r,s,t}$ becomes equal to the expression in \eqref{E:213}, completing the proof.
\end{proof}

The last case for which we can prove the formula for correlations in $312$, and by particle-hole symmetry Proposition~\ref{P:TASEPbasic}(ii) $231$, that is $E_{j,k,i}=E_{n+1-i,n+1-k,n+1-j}$.

\begin{theorem} \label{T:231corr}
For $i<j<k$,
\[
E_{j,k,i} = \begin{cases}
\ds \frac{3(j-i)(2n-j-i-1)}{n^3 (n-1)(n-2)}
- \frac{4(j-i)(n-k)}{n^3 (n-1)^2}, & k>j+1, \\
\\
\ds \frac{(j-i)(n-1-j)}{n^2 (n-1)^2}
\left( \frac{1}{n-2} +\frac{3(n-i-1)}{n} \right.  & \\
\\
\ds  \left. -\frac{(n-1-j)(3n-3i+j-1)}{n (n-2)} \right) +\frac{6(j-i)(n-i)}{n^3 (n-1)(n-2)} , & k=j+1.
\end{cases}
\]
\end{theorem}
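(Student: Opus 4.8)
The plan is to follow the template of the proofs of \refT{T:321corr} and \refT{T:213corr}. By particle-hole symmetry (Proposition~\ref{P:TASEPbasic}(ii)) the asserted identity for $E_{j,k,i}$ is equivalent to the corresponding formula for the $312$-pattern probability $E_{k,i,j}$ with $i<j<k$ (highest species at site $1$, lowest at site $2$, middle at site $3$); I would establish that and then substitute. Using the \pp{} I would project to the four-species chain $\om_{m_{r,s,t}}$ with $m=(r,s,t,n-r-s-t)$ and put $T_{r,s,t}=\P(w_1=4,\,w_2=2,\,w_3=3)$, so that, exactly as in \eqref{Trst},
\[
T_{r,s,t}=\sum_{k=r+s+t+1}^n\sum_{j=r+s+1}^{r+s+t}\sum_{i=r+1}^{r+s} E_{k,i,j},
\]
and the $E_{k,i,j}$ are recovered from the $T_{r,s,t}$ by the threefold inclusion--exclusion analogous to \eqref{e:recover}. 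The first computation is then to substitute the claimed formula for $E_{k,i,j}$ into this triple sum and evaluate it in closed form with a computer algebra package, as in the proof of \refT{T:213corr}; the formula for $E_{k,i,j}$ being piecewise, the sum is handled as a generic part plus the correction coming from the terms with $k=j+1$, and the outcome is an explicit rational function of $n,r,s,t$.

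The second, and main, computation is to recount $T_{r,s,t}$ via multiline queues: I would enumerate the configurations on three rows that the bully-path map sends to a word beginning $4,2,3$. As in the proof of \refT{T:213corr} this word is not monotone, so there are several admissible local patterns for the first three columns --- the bottom row is vacant in column $1$, occupied in column $2$ (reached by a path originating in row $2$) and occupied in column $3$ (reached by a path originating in row $3$), while the entries of rows $1$ and $2$ in these columns range over the finitely many options permitted by the bully-path rules, subject to the requirement that no lower-class particle can queue past the prescribed cells. For each such pattern the positions of the vacant cells in rows $1,2,3$ of the remaining columns form a chained system of inequalities which, after the change of variables used in the proof of \refT{T:cji}, is in bijection with semistandard Young tableaux on three columns with largest entry $n-3$, counted by \eqref{hookcontent3}. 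Summing the resulting $\ssyt_{\bullet,\bullet,\bullet}(n-3)$ terms, dividing by the total count $\binom nr\binom n{r+s}\binom n{r+s+t}$ of multiline queues, and simplifying, I would check that the result matches the rational function from the first computation.

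It then remains to treat the degenerate cases $r=0$, $s=0$, $t=0$ separately, as in \refT{T:321corr}: $T_{r,s,t}\equiv 0$ when $s=0$ or $t=0$, while $r=0$ (a projection to a three-species system) is handled as in the special case of \refL{L:xylesseqn}. Applying the threefold inclusion--exclusion to the formula for $T_{r,s,t}$ then yields $E_{k,i,j}$, hence $E_{j,k,i}$; the split between $k>j+1$ and $k=j+1$ appears because for $k=j+1$ one of the $T$-arguments occurring in the inversion lands on a boundary where the generic rational formula must be replaced by the separately verified boundary value --- this is the three-point analogue of the extra $E_{j-1,j}$ mass in \refT{T:cji} and \refT{T:213corr}.

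I expect the main obstacle to be the combinatorial bookkeeping in the second computation: the word $4,2,3$ admits strictly more bully-path routings than the monotone word of \refT{T:321corr}, so pinning down exactly the admissible multiline-queue patterns (and discarding the spurious ones) is delicate, and the resulting identity --- a sum of hook-content evaluations against the rational function of the first step, tracked carefully through the $k=j+1$ case split --- is long enough to genuinely require computer algebra, just as in \refT{T:213corr}.
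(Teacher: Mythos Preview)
Your proposal follows essentially the same template as the paper's proof --- project to the four-species chain, compute $T_{r,s,t}$ two ways (once by summing the conjectured $E$'s with a CAS, once by enumerating admissible multiline-queue prefixes via three-column SSYT counts from \eqref{hookcontent3}), and match. The one substantive difference is that you route through particle--hole symmetry to the $312$ pattern and set $T_{r,s,t}=\P(w_1=4,\,w_2=2,\,w_3=3)$, whereas the paper works directly with the $231$ pattern, taking $T_{r,s,t}=\P(w_1=3,\,w_2=4,\,w_3=2)$ and summing $E_{j,k,i}$ itself. In the paper's direct approach the word $342$ forces exactly four admissible local prefixes for the first three columns, each of which is counted by a single $\ssyt_{\bullet,\bullet,\bullet}(n-3)$; your $423$ analysis should be comparable in complexity, but the particle--hole detour is not needed and slightly obscures which summand of the inclusion--exclusion is responsible for the $k=j+1$ boundary term. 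Otherwise your outline, including the handling of the degenerate cases $r,s,t=0$, matches the paper.
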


\begin{proof}
This time we define $T_{r,s,t}=\P(w_1=3,w_2=4,w_3=2)$.
Summing $E_{j,k,i}$'s just as we did in \eqref{Trst}, we get again using a standard computer algebra package,
\be \label{E:231}
\begin{split}
T_{r,s,t}=&
s(n-r-s-t)\Big(2n^2st+2n^2t^2+nrs-ns-nt-nt^2s-ns^2t\\
&-2nstr+2nrt+ns^2-2nt^2r-nt^2-nr+r^2\\
&+rs-2trs-r^2s-rs^2-2tr^2\Big)/\Big(n^3(n-1)^2(n-2)\Big).
\end{split}
\ee

Now, we need to count the number of multiline queues that make the TASEP word start with 342. There are four possible configurations
\begin{center}
\begin{tabular}{c c}
$\begin{array}{c c c c l} 
\vac & \vac & \vac  & . & \dots \\ 
\vac & \occ & \vac & . & \dots \\ 
\occ & \vac &  \occ & . & \dots \\
\hline 3 &4 & 2 &  . & \dots \end{array}$,  \quad\quad &
$\begin{array}{c c c c l} 
\vac & \vac & \vac  & . & \dots \\ 
\vac & \vac & \occ & . & \dots \\ 
\occ & \vac &  \occ & . & \dots \\
\hline 3 &4 & 2 &  . & \dots \end{array}$,\\
\\
$\begin{array}{c c c c l} 
\vac & \vac & \occ  & . & \dots \\ 
\vac & \occ & \vac & . & \dots \\ 
\occ & \vac &  \occ & . & \dots \\
\hline 3 &4 & 2 &  . & \dots \end{array}$,  \quad\quad &
$\begin{array}{c c c c l} 
\vac & \vac & \vac  & . & \dots \\ 
\vac & \occ & \occ & . & \dots \\ 
\occ & \vac &  \occ & . & \dots \\
\hline 3 &4 & 2 &  . & \dots \end{array}$,
\end{tabular}
\end{center}

\smallskip
\noindent
where, as before, no particle can be in queue in any of the rows in the first column.
These are counted by 
\begin{align*}
& 2\, \ssyt_{n-r-3,n-r-s-2,n-r-s-t-1}(n-3) \\
&+\ssyt_{n-r-2,n-r-s-2,n-r-s-t-1}(n-3)\\
&+\ssyt_{n-r-3,n-r-s-1,n-r-s-t-1}(n-3).
\end{align*}
Using \eqref{hookcontent3} and dividing by $\binom{n}{r}\binom{n}{r+s}\binom{n}{r+s+t}$  we get
\begin{align*}
T_{r,s,t}=&
\frac{(n-r)(n-r-s)(n-r-s-t)}{n^3(n-1)^2(n-2)} \\
&\left (2st(s+t)+\frac{(s+1)t(s+t+1)r}{n-r}+\frac{(s-1)(t+1)(s+t)}{n-r-s} \right ),\\
\end{align*}
and a simple calculation shows that this is equal to \eqref{E:231}.
\end{proof}

For the increasing case we have not been able to compute the correlations using this method. This is in one sense the most interesting
formula because the probability is completely independent of $i,j,k$ when they are far apart. This generalizes the behaviour of $E_{i,j}$ in Theorem~\ref{T:cji}.

To prove this conjecture with the same approach as for the other patterns, we would need a formula for the number of SSYT and for the 
number of near-SSYT, that is tableaux that are not SSYT but becomes SSYT if the last column is moved one step up (and the top value deleted).

\begin{conjecture} \label{conj:123corr}
For $i<j<k$,
\[
E_{i,j,k} = \begin{cases}	
\ds \frac 1{n^{3}} & i<j-1<k-2, \\
\\
\ds \frac{n-1+i(n-i)}{n^{3}(n-1)} & i=j-1<k-2, \\
\\
\ds \frac{n-1+j(n-j)}{n^{3}(n-1)} & i<j-1=k-2, \\
\\
\ds \frac{(n-1+i(n-i))(n-1+(i+1)(n-i-1))}{n^{3}(n-1)^{2}}\! & \\
\\
\ds \quad\quad+\frac{2i(i+1)(n-i)(n-i-1)}{n^{3}(n-1)^{2}(n-2)}& i=j-1=k-2.
\end{cases}
\]
\end{conjecture}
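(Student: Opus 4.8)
\medskip\noindent
The plan is to follow the template of Theorems~\ref{T:321corr}, \ref{T:213corr} and~\ref{T:231corr}. First, use the \pp{} to pass from $\om_n$ to the four-species system $\om_{m_{r,s,t}}$ with $m=(r,s,t,n-r-s-t)$, and put $T_{r,s,t}:=\P(w_1=2,\,w_2=3,\,w_3=4)$ at stationarity in that chain. Reading off which classes become $2,3,4$ gives
\[
T_{r,s,t}=\sum_{k=r+s+t+1}^{n}\ \sum_{j=r+s+1}^{r+s+t}\ \sum_{i=r+1}^{r+s} E_{i,j,k},
\]
and $E_{i,j,k}$ is recovered from the $T_{r,s,t}$'s by inclusion--exclusion. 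Substituting the conjectured four-case expression for $E_{i,j,k}$ and performing the triple sum (splitting according to whether $i=j-1$ and/or $j=k-1$) yields, after simplification with a computer algebra system, an explicit rational function $P(r,s,t;n)$; just as in the proofs of Theorems~\ref{T:213corr} and~\ref{T:231corr}, one expects $P$ to be a sizeable polynomial over $n^{3}(n-1)^{2}(n-2)$. It then suffices to show that the number of multiline queues of height~$3$ whose bully-path projection begins $2,3,4$ equals $\binom{n}{r}\binom{n}{r+s}\binom{n}{r+s+t}\,P(r,s,t;n)$.

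The second step is that enumeration. Because the target word $234$ is increasing, the three bully paths delivering $2$, $3$, $4$ to positions $1,2,3$ must weave around one another, so --- unlike the single configuration~\eqref{E:que432} of the decreasing case --- there are several admissible boundary configurations for the first three columns, with occupied sites possibly stacked above in columns~$2$ and~$3$; these play the role of case~(ii) in the proof of Theorem~\ref{T:cji} and of the four configurations appearing in Theorems~\ref{T:213corr} and~\ref{T:231corr}. For each such configuration, the change of variables $\lambda_{x,b}=n-z_{x,\bullet}$ used throughout Section~\ref{S:Y} turns the constraints on columns $4,\dots,n$ either into those defining an ordinary three-column $\ssyt$, counted by $\ssyt_{a,b,c}(n-3)$ through~\eqref{hookcontent3}, or into those defining a \emph{near-$\ssyt$}: a three-column array, column-strict on its first two columns and weakly increasing along rows, that is not semistandard but becomes semistandard once its last column is moved one step up and its top value is deleted.

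The third and genuinely hard step is a closed formula for the number of near-$\ssyt$ with prescribed column lengths and maximum entry $n-3$. I would mirror the derivations of Lemmas~\ref{L:X}--\ref{L:Z} and Theorem~\ref{T:Y2}: either set up a recursion on the column lengths (equivalently on the bottom row of the first two columns), feed in an ansatz as a binomial sum against the values $\ssyt_{a-f,b-f,c-f}(n-3-e)$ with unknown coefficients, and solve the resulting coefficient recursion; or write the near-$\ssyt$ count by inclusion--exclusion as a difference of ordinary three-column tableau counts with a relaxed last-column condition and collapse the sums with Chu--Vandermonde and Vandermonde-type identities, exactly as in the proof of Theorem~\ref{T:Y2}. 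With such a formula in hand one sums the contributions of all boundary configurations, divides by $\binom{n}{r}\binom{n}{r+s}\binom{n}{r+s+t}$, and checks --- once more with computer algebra --- that the result equals $P(r,s,t;n)$; the degenerate cases $r=0$, $s=0$, $t=0$ (projections to fewer than four queues) are handled separately, as at the end of the proof of Theorem~\ref{T:321corr}. A useful consistency check along the way: particle-hole symmetry Proposition~\ref{P:TASEPbasic}(ii) sends $E_{i,j,k}\mapsto E_{n+1-k,\,n+1-j,\,n+1-i}$, again an increasing triple, so the conjectured formula must be invariant under $(i,j,k)\mapsto(n+1-k,n+1-j,n+1-i)$, which is immediate term by term.

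The main obstacle is precisely the near-$\ssyt$ enumeration together with the bookkeeping of which boundary configurations occur and with what multiplicity: the weaving of three bully paths producing an \emph{increasing} word is combinatorially more intricate than the decreasing case, and the ``defect'' in the last column lies outside the reach of the hook-content formula. This is exactly the gap flagged just before the conjecture, and filling it is what would upgrade the statement to a theorem.
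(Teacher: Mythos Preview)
Your proposal is not a proof but a proof \emph{plan}, and you say as much in your final paragraph. The paper does not prove this statement either: it is explicitly labelled a conjecture, and the text immediately preceding it states that ``for the increasing case we have not been able to compute the correlations using this method'' and that one ``would need a formula for the number of SSYT and for the number of near-SSYT''. So there is no proof in the paper to compare against.

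That said, your plan is an accurate reconstruction of the paper's intended approach and of the precise obstruction. The reduction via the projection principle to $T_{r,s,t}=\P(w_1=2,w_2=3,w_3=4)$ in $\om_{m_{r,s,t}}$, the enumeration of boundary configurations in the first three columns, and the translation of the remaining columns into three-column tableaux are all exactly parallel to Theorems~\ref{T:321corr}--\ref{T:231corr}. You have also correctly identified that the increasing pattern forces some configurations whose residual constraints are \emph{not} those of an ordinary SSYT but of a near-SSYT, and that no closed enumeration for these is available; this is verbatim the gap the paper names. Your suggested attacks on the near-SSYT count (recursion plus ansatz in the style of Lemma~\ref{L:Z}, or inclusion--exclusion against relaxed tableaux) are reasonable but speculative: neither is carried out, and there is no evidence they close. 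The particle--hole symmetry check is a nice sanity test but of course proves nothing.

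In short: your outline agrees with the paper's own diagnosis, and the missing step you flag is the same one the authors flag. Until the near-SSYT enumeration is actually done, this remains a conjecture.
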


Using Theorems~\ref{T:321corr}, \ref{T:213corr} and \ref{T:231corr} and
assuming Conjecture~\ref{conj:123corr}, one may deduce as a corollary similar to Corollary~\ref{C:density2}, the joint distributions of three 
consecutive particles in the TASEP speed 
process \cite[Theorem 7.7]{AAV}. This should come as no surprise; the proof in \cite{AAV} also amounts to projecting the TASEP to four-particle 
systems and then studying the multiline queues. 

\begin{corollary}[Amir-Angel-Valk\'o \cite{AAV}]
\label{C:density3}
Assuming Conjecture~\ref{conj:123corr}, in the limit as $i, j ,k,n \to \infty$ with $i \leq j \leq k$ and
\[
\frac {i}n \to \frac{x+1}2,
\frac {j}n \to \frac{y+1}2 \text{ and }
\frac {k}n \to \frac{z+1}2,
\]
so that $x,y,z \in [-1,1]$, the joint densities become exactly those given in 
Table 2 of \cite{AAV}.
\end{corollary}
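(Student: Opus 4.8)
The plan is to carry out a scaling limit exactly as in the proof of \refC{C:density2} and its distance-$a$ companion, now feeding in the finite-$n$ formulas of \refT{T:321corr}, \refT{T:213corr}, \refT{T:231corr} and Conjecture~\ref{conj:123corr}. First I would fix the rescaling. Partition $[-1,1]^3$ into $n^3$ congruent sub-cubes, sending the $(w_1,w_2,w_3)$-cube in the limit to the point $(2w_1/n-1,2w_2/n-1,2w_3/n-1)$, so that under the hypotheses $i/n\to(x+1)/2$, $j/n\to(y+1)/2$, $k/n\to(z+1)/2$ with $x\le y\le z$. Multiplying a probability $E_\pi$ by the reciprocal cube-volume $n^3/8$ then extracts the absolutely continuous part of the density on the open chamber of $[-1,1]^3$ determined by $\pi$. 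Just as in \refC{C:density2}, the sub-cases in which two of the three species are forced to be consecutive integers ($i=j-1$ or $j=k-1$) carry an order-one share of the total mass, concentrated on an interior facet $\{u_a=u_b\}$; there one rescales instead by $n^2/4$ to recover the singular density on that facet, while the doubly degenerate sub-case $i=j-1=k-2$ lives on the interior edge $\{u_1=u_2=u_3\}$ and is extracted by rescaling with $n/2$. Which rescaling applies is dictated by the order of growth in $n$ of the relevant summand: a contribution of order $n^{-3}$ feeds the bulk, one of order $n^{-2}$ a facet, one of order $n^{-1}$ the edge, and a contribution of strictly smaller order is killed by its rescaling and discarded.

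With this dictionary fixed, I would run through the $3!=6$ position-patterns. The patterns $321$, $213$, $132$, $312$, $231$ are covered by \refT{T:321corr}, by \refT{T:213corr} (invoking particle-hole symmetry \refP{P:TASEPbasic}(ii) for $132$), and by \refT{T:231corr} (and \refP{P:TASEPbasic}(ii) for $312$); the pattern $123$ is covered by Conjecture~\ref{conj:123corr}. For $321$ the formula is of order $n^{-3}$ throughout, so only a bulk density arises, and substituting $(j-i)/n\to(y-x)/2$, $(k-i)/n\to(z-x)/2$, $(k-j)/n\to(z-y)/2$ gives the corresponding entry of Table~2 at once. For $213$, $132$, $312$ and $231$ the generic sub-case contributes the bulk chamber belonging to that pattern, while the single degenerate sub-case of each splits into a part of order $n^{-2}$, which under the $n^2/4$ rescaling becomes the singular density on the facet pinned down by which pair of species coincide in the limit and by the sign of the remaining speed-gap, together with an order-$n^{-3}$ remainder that the same rescaling kills. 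The pattern $123$ has the richest degenerate structure: its four sub-cases produce, respectively, the constant bulk density $1/8$ (from $E_{i,j,k}=1/n^{3}$), two singular facet densities, and the edge density on $\{u_1=u_2=u_3\}$, each read off with the scaling matching its codimension. In every case one substitutes the scaling limits of $i$, $j$ and $k$, lets $n\to\infty$, and checks that the outcome coincides with the matching entry of Table~2 of \cite{AAV}; the six bulk chambers, the six facet half-strata and the single edge are each fed by exactly one of the six patterns, so they collectively exhaust that table.

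The proof is thus a finite, if somewhat lengthy, verification, and the principal obstacle is organisational rather than mathematical: for each pattern and sub-case one must classify every monomial of the exact formula by its order in $n$, and hence assign it to the bulk, a facet, or the edge; discard those monomials annihilated by the chosen rescaling; and correctly locate the surviving contribution among the strata (and, on a facet, in its correct half), so that it matches the right line of Table~2. Once the rescaling is set, each individual limit is an elementary manipulation of rational functions of exactly the flavour already carried out in \refC{C:density2} and \refC{C:dni}. The one genuine dependency is on Conjecture~\ref{conj:123corr}, which supplies all four pieces attached to the pattern $123$; granting it, the corollary follows.
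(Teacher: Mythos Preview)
Your proposal is correct and follows essentially the same approach as the paper: rescale by $(n/2)^3$, $(n/2)^2$, or $n/2$ according to whether the contribution is bulk, facet, or edge, feed in the finite-$n$ formulas from \refT{T:321corr}, \refT{T:213corr}, \refT{T:231corr} and Conjecture~\ref{conj:123corr}, and verify term-by-term against Table~2 of \cite{AAV}. The paper's own proof is a two-sentence sketch pointing back to \refC{C:density2}; you have simply written out in more detail the same verification it leaves implicit.
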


\begin{proof}
The idea of the proof is essentially identical to that of Corollary~\ref{C:density2} and we will not repeat all the details. In the first part of the table, one obtains the density by multiplying the probability mass functions by $(n/2)^3$ and taking the limit. In the second and third parts, one
 does the same, except that the prefactor becomes $(n/2)^2$ and $(n/2)$ respectively.
\end{proof}

We do however, as in Corollary \ref{C:addprob12}, get  a finite strengthening for the probability in the 9 cases corresponding to Theorems~\ref{T:321corr}, \ref{T:213corr} and \ref{T:231corr} and conjecture one for the four cases of Conjecture~\ref{conj:123corr}. The proof is just a summation over all possibilities for each case.

\begin{corollary}
Assuming Conjecture~\ref{conj:123corr}, the probability of all cases for three adjacent positions in the TASEP on permutations is given by Table \ref{Table:prob}. It is assumed that 
$i<j<k$ and the more general cases do not cover the more specific cases mentioned.
\end{corollary}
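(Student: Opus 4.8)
The plan is to compute each of the thirteen probabilities by a direct summation over all admissible triples $i<j<k$ of the formulas in Theorems~\ref{T:321corr}, \ref{T:213corr}, \ref{T:231corr} and of the expressions in Conjecture~\ref{conj:123corr}. For a fixed permutation pattern $\pi$ and a fixed ``degeneracy type'' (which of the coincidences $j=i+1$, $k=j+1$ hold), the relevant probability is $\sum E_{\pi}$ where the sum ranges over all triples of the given type, and the answer is a rational function of $n$ obtained by evaluating a polynomial sum divided by $\binom{n}{3}$ (or $\binom{n}{2}\cdot(n-2)$, etc., according to how many of the values are forced to be adjacent).

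Concretely, I would proceed pattern by pattern. For the strict case $i<j-1$, $j<k-1$ one sums the ``generic'' branch of each formula over $1\le i$, $i+2\le j$, $j+2\le k\le n$; this is a triple sum of a polynomial in $i,j,k$ of bounded degree, so it evaluates in closed form (e.g.\ via the standard $\sum_{i<j<k} f(i,j,k)$ manipulations, or simply by recognizing it as a fixed polynomial in $n$ and matching enough small values). For the degenerate cases one substitutes the appropriate equality ($k=j+1$, or $i=j-1$, or both) into the corresponding branch of the theorem/conjecture and sums the remaining one or two free indices. Throughout I would use Proposition~\ref{P:TASEPbasic}(iii) as a sanity check: the marginals must be consistent with $\P(w_1=i)=1/n$, and summing all thirteen probabilities (weighted by the number of patterns sharing each formula via particle--hole symmetry) must give~$1$. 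The cases coming from Conjecture~\ref{conj:123corr} are flagged as conjectural in the statement, so no independent justification of those rows is needed beyond the summation.

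The work is entirely mechanical: each entry is a finite sum of products of binomial coefficients or of low-degree polynomials, the kind of computation already carried out repeatedly in the proofs of Theorems~\ref{T:321corr}--\ref{T:231corr} (indeed those proofs already evaluate $\sum_{k,j,i} E_\pi$ as an intermediate step, namely $T_{r,s,t}$ at $r=s=t=0$ up to the boundary corrections), so one can in many cases read off the totals from the $T_{0,0,0}$ specialization and then separate out the degenerate contributions. I would organize the output as Table~\ref{Table:prob}, with one row per pattern class and columns for the four degeneracy types, and state explicitly the convention that the more specific rows are excluded from the more general ones (so that the grand total is genuinely~$1$).

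The main obstacle is bookkeeping rather than mathematics: there are $3!=6$ patterns collapsing to essentially three families under particle--hole symmetry, each with up to four degeneracy strata, and one must be careful that (i) the index ranges for each stratum are disjoint and exhaustive, (ii) the right branch of each piecewise formula is used in each stratum, and (iii) the denominators $n^3(n-1)^2(n-2)$, $n^3(n-1)$, etc., are cleared consistently so the final entries are displayed in lowest terms. I expect the increasing pattern $123$ (the conjectural one) to produce the most delicate summation because of the nested coincidences in the last stratum $i=j-1=k-2$, but since that row may simply be recorded as conjectural the genuine risk is limited to an arithmetic slip, which the $\sum=1$ check and the $n=4,5$ numerical check will catch.
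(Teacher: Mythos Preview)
Your proposal is correct and matches the paper's approach exactly: the paper's entire proof reads ``The proof is just a summation over all possibilities for each case,'' which is precisely the direct-summation strategy you describe. Your added remarks about sanity checks (marginals, total $=1$, small-$n$ numerics) and careful bookkeeping of the degeneracy strata are sensible elaborations, but the underlying method is identical.
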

\begin{table}[htbp!]
\[
\begin{array}{|c | c| c |}
\hline
(w_1,w_2,w_3) & \text{Probability} & \text{Follows from}\\
\hline
\noalign{\smallskip}
 (k,j,i) & 
 \frac{(n+1)(n+2)}{30n(n-1)} = \frac{1}{30}+\frac{2 n+1}{15n (n-1)}
 & \text{Theorem \ref{T:321corr}} \\
\hline
\noalign{\smallskip}
(i,k,j) & 
\multirow{2}{*}{$\frac{(n-2)(n-3)}{12n^2} = \frac{1}{12}-\frac{5 n-6}{12 n^2}$}
& 
\multirow{2}{*}{\text{Theorem \ref{T:213corr}}} \\
\;\&\; (j,i,k)  & & \\
\hline
\noalign{\smallskip}
(i,k,i+1) & 
\multirow{2}{*}{$\frac{n^3+8n^2-23n+10}{20n^2(n-1)} 
= \frac{1}{20} + \frac{9 n^2-23 n+10}{20 n^2 (n-1)}$} & 
\multirow{2}{*}{\text{Theorem \ref{T:213corr}}} \\
\;\&\; (j,i,j+1)  & & \\
\hline
\noalign{\smallskip}
(k,i,j) & 
\multirow{2}{*}{$\frac{(n+1)(n-3)(7n-10)}{60n^2(n-1)}
=\frac{7}{60}-\frac{17 n^2+n-30}{60 n^2 (n-1)}$} & 
\multirow{2}{*}{\text{Theorem \ref{T:231corr}}} \\
\;\&\;(j,k,i) & & \\
\hline
\noalign{\smallskip}
(k,i,i+1)  & 
\multirow{2}{*}{$\frac{(n+1)(n^2+7n-10)}{20n^2(n-1)} 
= \frac{1}{20} + \frac{9 n^2-3 n-10}{20 n^2  (n-1)}$} &
\multirow{2}{*}{\text{Theorem \ref{T:231corr}}} \\ 
\;\&\; (j,j+1,i) & & \\
\hline
\noalign{\smallskip}
(i,j,k) & \frac{(n-2)(n-3)(n-4)}{6n^3} = \frac{1}{6} + 
\frac{9 n^2-26 n+24}{6 n^3}& \text{Conjecture \ref{conj:123corr}} \\
\hline
\noalign{\smallskip}
(i,i+1,k)  & 
\multirow{2}{*}{$\frac{(n-2)(n-3)(n+8)}{12n^3}
=\frac{1}{12}+\frac{3 n^2-34 n+48}{12 n^3}$} &  
\multirow{2}{*}{\text{Conjecture \ref{conj:123corr}}} \\
\;\&\; (i,j,j+1) & & \\
\hline
\noalign{\smallskip}
(i,i+1,i+2) & \frac{n^4+13n^3+32n^2-160n+120}{30n^3(n-1)}
=\frac{1}{30}+\frac{7 n^3+16 n^2-80 n+60}{15n^3 (n-1)}  & 
\text{Conjecture \ref{conj:123corr}}  \\
\hline 
\end{array}
\]
\caption{Correlations $E_{w_1,w_2,w_3}$ in the limit.}
\label{Table:prob}
\end{table}

\section{Discussions and open problems}
\label{S:open}

As expected the correlation of several particles seems intractable in general. We can however state one fact and several conjectures.
We denote, just as in \refS{S:threept}, the general nearest neighbour correlation $E_{i_1,\dots,i_r}$ to be the joint probability of seeing 
$i_a$ in position $a$, for $a=1,\dots,r$. For the decreasing case we conjectured the following Vandermonde formula. A proof of this using 
determinantal techniques will appear in \cite{AasL}, but it is natural to ask for a simple proof of this result.

\begin{theorem} \label{T:decreasing}
For $r \leq n$ and $i_1 > \cdots > i_r$,
\[
E_{i_1,i_2\dots,i_r} = r! \; \frac{\ds \prod_{1 \leq a<b \leq r} (i_a-i_b)}
{\ds \;\;\prod_{i=0}^{r-1} (n-i)^{r-i}}.
\]
\end{theorem}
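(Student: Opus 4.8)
The plan is to run the argument of \refT{T:321corr} (the case $r=3$) at general $r$, using the \pp{} and the Ferrari--Martin multiline queues; this would supply the ``simple proof'' the authors ask for, alongside the determinantal one announced in \cite{AasL}. First I would project the $n$-species TASEP on permutations down to the $(r+1)$-species system with composition $m=(a_1,\dots,a_r,\,n-A_r)$, where $A_k:=a_1+\cdots+a_k$, identifying $i_1$ with the unique class $r+1$, $i_2$ with class $r$, and so on down to $i_r$ with class $2$. Writing
\[
T_{a_1,\dots,a_r}:=\P\bigl(w_1=r+1,\ w_2=r,\ \dots,\ w_r=2\bigr)
\]
for this system, the \pp{} gives
\[
T_{a_1,\dots,a_r}=\sum_{i_1=A_r+1}^{n}\ \sum_{i_2=A_{r-1}+1}^{A_r}\ \cdots\ \sum_{i_r=A_1+1}^{A_2}E_{i_1,i_2,\dots,i_r},
\]
where the ranges are consecutive disjoint blocks, so $i_1>\cdots>i_r$ holds automatically. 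As in the lower-rank cases this triangular family of relations is invertible by inclusion--exclusion, so it suffices to prove the conjectured value of every $T_{a_1,\dots,a_r}$.

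Next I would count the multiline queues of the $(r+1)$-species model whose bully-path projection begins $r+1,r,\dots,2$. Exactly as in \refT{T:cji} and \refT{T:321corr}, forbidding any lower-class particle from queueing over the first $r$ columns forces a single ``staircase'': row $k$ has its first $r+1-k$ sites $\vac$ and sites $r+2-k,\dots,r$ equal to $\occ$, while columns $r+1,\dots,n$ carry the remaining $\vac$'s subject to a chain of weak inequalities between consecutive rows. The substitution from the proof of \refT{T:cji} identifies these configurations bijectively with semistandard Young tableaux of the $r$-column shape $(\mu_1,\dots,\mu_r)$, $\mu_k=n-A_k-(r+1-k)$, with entries at most $n-r$. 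Since $|\ofm_m|=\prod_{k=1}^{r}\binom{n}{A_k}$, this gives
\[
T_{a_1,\dots,a_r}=\frac{\ssyt_{\mu_1,\dots,\mu_r}(n-r)}{\ds\prod_{k=1}^{r}\binom{n}{A_k}}.
\]
Applying the $r$-column hook--content formula (the generalisation of \eqref{hookcontent} and \eqref{hookcontent3}) to the numerator and cancelling the binomials, I expect this to collapse to
\[
T_{a_1,\dots,a_r}=\frac{\ds\prod_{k=1}^{r}(n-A_k)\ \prod_{1\le k<l\le r}(A_l-A_k)}{\ds\prod_{i=0}^{r-1}(n-i)^{\,r-i}},
\]
which one verifies by hand agrees with \refT{T:321corr} for $r\le 3$. (This clean outcome relies on there being a \emph{single} admissible multiline queue; that is precisely why the increasing pattern of Conjecture~\ref{conj:123corr} resists the same attack.)

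It then remains to check that summing the conjectured formula over the blocks above reproduces this product, i.e.\ the combinatorial identity
\[
\sum_{i_1,\dots,i_r}\ \prod_{1\le a<b\le r}(i_a-i_b)\ =\ \frac{1}{r!}\ \prod_{k=1}^{r}(n-A_k)\ \prod_{1\le k<l\le r}(A_l-A_k),
\]
the $i_b$ running over the same blocks. Here I would write the Vandermonde product as $\bigl(\prod_{d=1}^{r}(d-1)!\bigr)\det\bigl(\binom{i_a}{d-1}\bigr)_{1\le a,d\le r}$; since the block sums are independent the summation passes inside the determinant, and $\sum_{j=p+1}^{q}\binom{j}{d}=\binom{q+1}{d+1}-\binom{p+1}{d+1}$, followed by a cumulative row operation, turns the $(k,d)$ entry into $\binom{A_{k+1}+1}{d}-\binom{A_1+1}{d}$ (with $A_{r+1}:=n$). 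This is a Vandermonde-type determinant and should evaluate to the stated product. Finally one handles the boundary cases: if some $a_k=0$ both sides vanish (the shape $(\mu_1,\dots,\mu_r)$ is not a partition), and if $A_r=n$ one projects to fewer than $r+1$ species, which is handled exactly as in the special case of \refL{L:xylesseqn}.

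The delicate point is the last two steps: the $r$-column hook--content product and the block-summed Vandermonde determinant are each products of $\Theta(r^2)$ binomial-type factors, and forcing them into visibly equal form is where the real work lies. A cleaner route keeps everything determinantal: by the Lindstr\"om--Gessel--Viennot lemma the staircase count is \emph{itself} a determinant of binomial coefficients, the block sum produces another determinant, and one may try to pass from one to the other by row and column operations without ever evaluating either --- presumably the shape of the argument in \cite{AasL}. It would be especially satisfying to replace all of this by a direct bijection between the multiline queues counted by $T_{a_1,\dots,a_r}$ and a manifestly Vandermonde-sized set.
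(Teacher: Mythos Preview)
The paper does not actually prove \refT{T:decreasing}; the sentence introducing it says ``A proof of this using determinantal techniques will appear in \cite{AasL}'' and then asks for a simpler argument. So there is no proof in the paper to compare against --- you are attempting precisely what the authors leave open.

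Your plan is the natural $r$-row extension of the proofs of \refT{T:cji} and \refT{T:321corr}, and the structural claims are correct: the staircase is the unique admissible prefix of a multiline queue compatible with the strictly decreasing pattern, your shape $\mu_k=n-A_k-(r+1-k)$ with maximum entry $n-r$ specialises to the paper's $r\le 3$ cases, and your closed form for $T_{a_1,\dots,a_r}$ agrees with \eqref{E:21} and \eqref{E:321}. But what you have written is a programme, not a proof: you say ``I expect this to collapse to'' and ``should evaluate to the stated product'', and then concede that ``forcing them into visibly equal form is where the real work lies''. Both identities are true and neither is deep --- the hook--content simplification is mechanical once one observes $\mu_k-\mu_l+l-k=A_l-A_k$, and the block-summed Vandermonde reduces, after your row operations, to the standard evaluation $\det\bigl(\binom{c_k}{d-1}\bigr)_{k,d}=\prod_{k<l}(c_l-c_k)\big/\prod_d (d-1)!$ --- but you have not actually performed them. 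If you carry out these two computations (and tidy the indexing in the determinant step, where the cumulative row operation and the passage from telescoping differences to a genuine Vandermonde need to be made explicit), the argument would be complete and would supply exactly the ``simple proof'' the paper requests.
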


The writing of this paper has been delayed partly because we really wanted to prove the $1/n^3$ formula in 
Conjecture \ref{conj:123corr}. It is 
interesting  that the simplest three-point correlation formula has proved impervious to our proof technique. 
The $1/n^3$ formula also strongly suggests that there is some more 
conceptual independence to be discovered and proved for the cyclic multispecies TASEP.  
We offer a sequence of conjectures generalizing this observation.

\begin{conjecture} \label{conj:increasing} 
For $i_1<i_2-1<i_3-2<\dots <i_r-(r-1)$,
\[
E_{i_1,\dots,i_r} =\frac 1{n^r}.
\]
\end{conjecture}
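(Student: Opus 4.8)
The plan is to push the proofs of Theorems~\ref{T:321corr}, \ref{T:213corr} and \ref{T:231corr} to arbitrary rank $r$, isolating one missing enumerative identity as the only real difficulty. \emph{Step 1 (projection principle).} Fix positive integers $x_1,\dots,x_{r+1}$ with $\sum_a x_a=n$, set $X_a=x_1+\cdots+x_a$, and project the TASEP on permutations onto the $(r+1)$-species chain $\om_{m_{x_1,\dots,x_{r+1}}}$, declaring species $a$ to be the block of values $(X_{a-1},X_a]$. Then the word $2,3,\dots,r+1$ in positions $1,\dots,r$ of the small chain records exactly the events $\{w_1=i_1,\dots,w_r=i_r\}$ with $i_b\in(X_b,X_{b+1}]$, so if $T_{x_1,\dots,x_{r+1}}$ denotes that probability the projection principle gives
\[
T_{x_1,\dots,x_{r+1}}=\sum_{i_1\in(X_1,X_2]}\cdots\sum_{i_r\in(X_r,X_{r+1}]}E_{i_1,\dots,i_r}.
\]
A single inclusion--exclusion (a finite difference in $x_1,\dots,x_r$ over the tiling boxes), together with the degenerate cases where some $x_a=0$ handled separately as in the cited proofs, then recovers every $E_{i_1,\dots,i_r}$ with $i_1<\cdots<i_r$ from the family $T_\bullet$. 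So it suffices to obtain a closed formula for $T_{x_1,\dots,x_{r+1}}$ and to check that the resulting expression for $E_{i_1,\dots,i_r}$ reduces to $n^{-r}$ precisely when all gaps $i_{b+1}-i_b$ are at least~$2$.

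\emph{Step 2 (multiline queues).} Compute $T_{x_1,\dots,x_{r+1}}$ by enumerating the $r$-row Ferrari--Martin multiline queues whose bully-path image begins $2,3,\dots,r+1$. As in the rank-$2$ and rank-$3$ proofs, prescribing these $r$ outputs fixes the first few columns of the queue up to a bounded number of $\vac/\occ$ choices, so the count splits over finitely many local patterns; for each pattern the remainder of the queue is arbitrary subject only to chains of weak and strict inequalities among the positions of the $\vac$'s in consecutive rows, which under the standard substitution relating $\vac$-positions to tableau entries become fillings of $r$ columns with entries at most $n-r$. The increasing output pattern now forces a new phenomenon: some local patterns produce, instead of a semistandard tableau of a fixed shape, a \emph{near-SSYT} --- a filling in which one column (for $r\ge 4$, possibly several) is compared with its left neighbour with a one-row offset, exactly the objects flagged in the remark preceding Conjecture~\ref{conj:123corr}. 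Thus $\bigl(\prod_{i=1}^{r}\binom n{X_i}\bigr)\,T_{x_1,\dots,x_{r+1}}$ equals a finite sum of ordinary SSYT counts, evaluated by the hook-content formulas \eqref{hookcontent} and \eqref{hookcontent3}, plus a finite sum of near-SSYT counts.

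\emph{Step 3 (the missing identity, and the obstacle).} The crux is a hook-content--type product formula for the number of near-SSYT on $r$ columns of lengths $a_1\ge\cdots\ge a_r$ with bounded entries under the offset column-comparison rule. I would attack it along the lines of Section~\ref{S:Y}: peel off the bottom row, sum over its admissible entries to obtain a recursion in the number of rows, guess the solution --- I expect a product involving Narayana numbers, generalising Theorem~\ref{T:Y2}, since a one-row offset is a Catalan-type loosening of the SSYT inequalities --- and confirm it by induction; an alternative is the non-intersecting-lattice-path model of the queue, where genuine SSYT are counted by a Lindstr\"om--Gessel--Viennot determinant while a near-SSYT contributes an extra ``crossing'' term to be tamed either by a sign-reversing involution or by the determinantal identities of the forthcoming \cite{AasL}. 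Once such a formula is available the rest is mechanical: substituting into the inclusion--exclusion of Step~1, the degenerate boundary corrections are inactive and the bulk terms telescope in the all-gaps-$\ge 2$ regime --- the nearest-neighbour counterpart of the independence phenomenon behind Corollary~\ref{C:Yconst} and the telescoping of $g_{j,i}$ in the proof of Corollary~\ref{C:uniform}, or else verifiable by computer algebra as in Theorems~\ref{T:213corr}--\ref{T:231corr} --- leaving exactly $n^{-r}$. The genuinely hard part is therefore this near-SSYT enumeration: the case $r=2$ produces no near-SSYT and is already covered by Corollary~\ref{C:uniform}, while $r=3$ is Conjecture~\ref{conj:123corr} and it is precisely the near-SSYT count that has blocked it. A conceptual proof --- a coupling or bijection directly on multiline queues explaining why the exact values $i_1,\dots,i_r$ disappear, in the spirit of Remark~\ref{R:indep} --- would bypass Step~3 entirely and is the outcome one should really hope for.
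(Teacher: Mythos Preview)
The statement you are attempting is a \emph{conjecture} in the paper, not a theorem: the paper offers no proof, and in fact explicitly says (opening of Section~\ref{S:open}) that ``the techniques used to prove the two-point case in Theorem~\ref{T:cji} fail for Conjecture~\ref{conj:123corr} and it is clear that new ideas are needed.'' Your proposal is not a proof but a research outline, and to your credit you say so: Step~3 isolates the near-SSYT enumeration as ``the genuinely hard part'' and leaves it undone. That is exactly the obstruction the paper names in the remark just before Conjecture~\ref{conj:123corr}.

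So Steps~1 and~2 of your outline are correct and reproduce the paper's own reduction (projection principle plus multiline-queue enumeration, splitting into local patterns), but they do not go beyond what the paper already does for $r\le 3$. The content of Step~3 is entirely speculative: the hoped-for product formula for near-SSYT with Narayana-type factors is not established, the suggested recursion in the spirit of Section~\ref{S:Y} is not carried out, and the assertion that once such a formula is in hand ``the rest is mechanical'' --- telescoping under inclusion--exclusion to $n^{-r}$ in the all-gaps-$\ge 2$ regime --- is itself not verified (even for $r=3$ this would resolve Conjecture~\ref{conj:123corr}, which remains open in the paper). The alternative you float at the end, a conceptual coupling or bijection explaining the independence directly on multiline queues, is precisely the open problem the paper poses at the end of Section~\ref{S:open}. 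In short, there is a genuine gap, you have correctly located it, and it coincides with the gap the paper itself could not close.
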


Note that the first formula in \refT{T:213corr} says $E_{j,i,k}=E_{j,i}\cdot \frac{1}{n}$ if $j+1<k$. This naturally suggest the following generalization of 
Conjecture \ref{conj:increasing}.

\begin{conjecture} \label{conj:one} 
For any $i_1, i_2,\dots, i_r$ and $k > 1+ \max_a i_a$, then
\[
E_{i_1,\dots,i_r,k} =\frac 1{n}\cdot E_{i_1,\dots,i_r}.
\]
\end{conjecture}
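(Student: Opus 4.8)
The natural plan is to follow the route of Theorems~\ref{T:cji}, \ref{T:dinY} and~\ref{T:321corr}: apply the projection principle, rewrite the probability as a count of multiline queues, and reduce that count to an enumeration of constrained semistandard Young tableaux. First I would reduce to a system with boundedly many classes. Since $E_{i_1,\dots,i_r}=0$ (so the identity is trivial) as soon as two of the $i_a$ coincide, assume $i_1,\dots,i_r$ distinct and put $v=\max_a i_a$; then $k\ge v+2$, so in particular $k\notin\set{i_1,\dots,i_r}$, and I will assume $k<n$ (the boundary case $k=n$ differs only in bookkeeping, with $\set k$ itself the top class). Lump the $n$ classes of $\om_n$ by keeping each of $i_1,\dots,i_r$ and $k$ as its own singleton class and grouping the remaining values into the intervals between consecutive singletons, and write $\om_{m'}$ for the resulting $q$-class system. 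The point of the hypothesis $k\ge v+2$ is that the interval $[v+1,k-1]$ is nonempty, so it forms a genuine ``buffer'' class lying strictly between $\set v$ and $\set k$, and $\set k$ is the second-highest class of $\om_{m'}$ (the highest being $[k+1,n]$). By the projection principle, $E_{i_1,\dots,i_r,k}=\P_{m'}\bigpar{w_a=\phi(i_a)\text{ for }a\le r,\;w_{r+1}=\phi(k)}$, where $\phi$ sends a value to its class, and, merging $[v+1,n]$ back into a single top class, $E_{i_1,\dots,i_r}=\P_{m''}\bigpar{w_a=\phi(i_a)\text{ for }a\le r}$ for the coarser system $m''$.

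Next I would pass to multiline queues. Writing $N_{m'}$ (resp.\ $N_{m''}$) for the number of multiline queues of type $m'$ (resp.\ $m''$) whose $B$-image obeys the prescribed conditions, one has $E_{i_1,\dots,i_r,k}=N_{m'}/\prod_i\binom n{M'_i}$ and $E_{i_1,\dots,i_r}=N_{m''}/\prod_i\binom n{M''_i}$, the two products differing by exactly the factor $\binom n{k-1}\binom nk$. The one genuine structural observation is that, because $\set k$ is the second-highest class and carries a single particle, the bully path producing a $k$ is forced to be a single vertical segment: it issues from the unique occupied cell of the bottom multiline-queue row not consumed by the $M_{q-2}=k-1$ bully paths descending from above, and drops straight to the output row. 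Hence $B(C)_{r+1}=\phi(k)$ is a prescribed condition on column $r+1$ of the bottom multiline-queue row, while the conditions $B(C)_a=\phi(i_a)$ for $a\le r$ fix the exit columns of the $r$ bully paths producing $i_1,\dots,i_r$. Exactly as in the proofs of Theorems~\ref{T:cji} and~\ref{T:321corr}, but now with up to $r+1$ columns, the standard change of variables converts all of this into an enumeration of multi-column semistandard Young tableaux of prescribed column lengths and bounded entries carrying a ``prescribed value in a prescribed column'' constraint of the type encoded by $Y_{r,l}^{\beta}(m)$ in \refS{S:Y}; the identity to be verified then reads $N_{m'}=\tfrac1n\binom n{k-1}\binom nk\,N_{m''}$.

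I expect this last enumeration to be the main obstacle, for the same reason the authors isolate for Conjecture~\ref{conj:123corr}: it requires closed forms not only for several-column SSYT but also for the associated \emph{near}-SSYT --- tableaux that become semistandard after sliding the last column up one box and deleting its top entry --- and here additionally carrying a fixed-value-in-a-fixed-column constraint, i.e.\ a several-column, $\beta$-refined analogue of the hook-content formula~\eqref{hookcontent3} in the spirit of Theorem~\ref{T:Y2} and its Narayana-number answer. A softer-looking but, I believe, comparably hard alternative would bypass the counting by a direct combinatorial decoupling: an involution on multiline queues of type $m'$ that fixes the exit columns $B(C)_1,\dots,B(C)_r$ of the ``small'' bully paths while cyclically shifting the leftover cell of the bottom row through all $n$ columns --- the buffer class being exactly what should make such a rerouting invisible to the small paths, whence the uniformity that produces the factor $\tfrac1n$. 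Making either picture rigorous is where the difficulty concentrates, which is presumably why the statement is offered only as a conjecture.
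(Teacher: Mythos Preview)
There is nothing to compare your proposal against: the paper does not prove this statement. It is stated as Conjecture~\ref{conj:one} in \refS{S:open} and explicitly left open; the authors remark that ``the techniques used to prove the two-point case in Theorem~\ref{T:cji} fail for Conjecture~\ref{conj:123corr} and it is clear that new ideas are needed for the conjectures above,'' which includes this one.

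Your proposal is honest about this --- you do not claim a proof, only a plan whose final step you flag as the genuine obstruction. The plan itself (project to a bounded-class system, pass to multiline queues, observe that the class-$k$ bully path is a single bottom-row cell, and reduce to a constrained SSYT/near-SSYT count) is entirely in the spirit of the paper's methods, and the obstacle you isolate --- needing closed forms for several-column near-SSYT with a fixed-value-in-a-fixed-column constraint --- is exactly the one the authors name when explaining why Conjecture~\ref{conj:123corr} resists the same approach. Your alternative idea of a decoupling involution that cycles the leftover bottom-row cell through all $n$ columns while leaving the small bully paths' exit columns fixed is a reasonable heuristic for the factor $1/n$, but the buffer class alone does not obviously guarantee such a rerouting is invisible to the paths above; queues from earlier rows can reach into the bottom row far to the right of column~$r+1$, so moving the leftover cell can change which occupied cells those paths consume. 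That interaction is precisely where the difficulty sits, and neither you nor the paper resolves it.
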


Note that this is a finite cyclic version of \cite[Lemma 7.3]{AAV}.
Inspired by our study of correlation of particles further apart in \refS{S:Two far}, in particular Corollary \ref{C:uniform} 
there is reason to believe the following natural generalization of Conjecture \ref{conj:one}. 

\begin{conjecture} \label{conj:oneapart} 
For any $i_1, i_2,\dots, i_r$ and $b,k$ such that $k > b-r+ \max_a i_a$, then
\[
\P(w_{a}=i_a \text { for } 1\le a\le r, \text { and } w_b=k) =\frac 1{n}\cdot E_{i_1,\dots,i_r}
\]
\end{conjecture}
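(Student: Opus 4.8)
The plan is to follow the three-step strategy used throughout Sections~\ref{S:Two adjacent}--\ref{S:threept}: reduce to a multispecies TASEP with a bounded number of classes via the \pp{}, translate the probability into a count of multiline queues, and then extract the factor $1/n$ from a \emph{stability plus factorization} phenomenon generalizing \refC{C:Yconst} and the $a$-independence of $D_{y,x}(1,a)$ when $x+y=n$ in \refL{L:xylesseqn}.

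First I would apply the \pp{}. Since necessarily $b\ge r+1$, the hypothesis $k>b-r+\max_a i_a$ already forces $k>\max_a i_a$, so $i_1,\dots,i_r,k$ are $r+1$ distinct classes; collapsing $\Omega_n$ onto the multispecies TASEP $\Omega_m$ whose classes are these $r+1$ values together with filler classes for the gaps (between consecutive relevant values, above $k$, below $\min_a i_a$) turns the left-hand side of Conjecture~\ref{conj:oneapart} into $\P(w_a=\bar\imath_a$ for $1\le a\le r$ and $w_b=\bar k)$ in $\Omega_m$, and turns $E_{i_1,\dots,i_r}$ into the same quantity with the $w_b$ constraint removed. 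Since $\bar k$ is then a singleton class and $N=n$ is unchanged, $\P(w_b=\bar k)=m_{\bar k}/N=1/n$ by Proposition~\ref{P:TASEPbasic}(i),(iii), so the conjecture is exactly the statement that the events $\{w_a=\bar\imath_a\ \text{for}\ 1\le a\le r\}$ and $\{w_b=\bar k\}$ are independent. By \refT{T:mlqprob} and \refT{T:bullypath} both probabilities are multiline-queue counts divided by $\prod_i\binom N{M_i}$, so one must show that imposing $w_b=\bar k$ reduces the number of admissible $q\in\ofm_m$ by exactly the factor $n$. (Projecting instead onto the smaller systems obtained by merging adjacent singletons into blocks, and recovering the individual correlation by inclusion--exclusion as in \refL{L:xylesseqn} and \refT{T:321corr}, is computationally lighter and is the form in which I would carry out the estimates.)

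Next I would analyse the structure of those multiline queues. Fixing $w_1=\bar\imath_1,\dots,w_r=\bar\imath_r$ rigidifies the $\occ/\vac$ pattern in the first $r$ columns exactly as in \eqref{E:que32}, \eqref{E:que432} and the displays of \refS{S:threept}, while the value $\bar k$ at position $b$ is delivered by a bully path descending from the level of $\bar k$ near column $b$ (for $k=n$ it is simply the constraint $C_{n-1,b}=\vac$, which pins the top multiline row and is already manifestly independent of everything below, with ratio $m_n/N=1/n$). The key claim -- the multiline-queue analogue of the $\beta$-stability of \refC{C:Yconst} -- is that $k>b-r+\max_a i_a$ is precisely the inequality forcing this bully path, and all $\occ/\vac$ constraints it creates, to stay strictly above and to the right of the rigid region, so the two sets of constraints decouple. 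Granting this, the admissible queues split as a rigid prefix (counted by the hook-content expression underlying $E_{i_1,\dots,i_r}$) times a completion in which the $w_b=\bar k$ constraint contributes a ratio that telescopes, across the collapsed hook-content products, to $m_{\bar k}/N=1/n$ -- exactly the collapse seen in the stable ranges of \refP{P:Y1} and \refT{T:Y2}. This yields $\P(\cdots)=\tfrac1n E_{i_1,\dots,i_r}$.

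The main obstacle is making the decoupling rigorous for general $r$ without closed forms: for $r=1$ this is \refC{C:uniform}, proved through the two-column count $Y^\beta_{r,l}(m)$, but for $r\ge 2$ one is pushed into multi-column constrained SSYT and ``near-SSYT'' -- the very enumeration already missing for Conjecture~\ref{conj:123corr}. The more promising and more conceptual route is to avoid closed forms entirely and exhibit, for $k$ in the stable range, an explicit $\Z/n$-action (a cyclic shift of an appropriate multiline row, compatible with $\bp$) that fixes $\bar\imath_1\cdots\bar\imath_r$ at positions $1,\dots,r$ while permuting the value at position $b$ transitively through all $n$ classes; such an action would give the factor $1/n$ immediately and would simultaneously settle Conjectures~\ref{conj:increasing}, \ref{conj:one} and \ref{conj:oneapart}, and building it compatibly with the bully-path projection is the heart of the difficulty. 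Failing that, one could first prove the adjacent case $b=r+1$ (Conjecture~\ref{conj:one}) and then attempt to slide the far site from position $b$ to position $r+1$ through the intermediate, unconstrained columns.
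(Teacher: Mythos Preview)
There is no proof in the paper to compare against: the statement is labelled a \emph{Conjecture} and is listed among the open problems in \refS{S:open}. The authors explicitly write that ``the techniques used to prove the two-point case in Theorem~\ref{T:cji} fail for Conjecture~\ref{conj:123corr} and it is clear that new ideas are needed for the conjectures above,'' which includes this one. So your task of reproducing the paper's argument does not apply here.

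Your proposal is not a proof but a strategy sketch, and you are candid about this. The projection--multiline--SSYT pipeline you describe is exactly the machinery the paper uses for the results it \emph{does} prove, and your identification of the bottleneck is accurate: the ``decoupling'' you need for general $r$ would require either closed forms for multi-column constrained SSYT (and the near-SSYT variants the paper mentions just before Conjecture~\ref{conj:123corr}) or a bijective $\Z/n$-action compatible with the bully-path projection. Neither is available; the former is precisely what blocks even the $r=2$, $b=3$ case $E_{i,j,k}=1/n^3$, and the latter is essentially the content of the paper's concluding Problem (``find a more conceptual proof of the independence $E_{i,j}=1/n^2$''). So the genuine gap is not a slip in your outline but the absence of the key lemma --- the stability/factorization step you label ``the key claim'' --- for which no proof currently exists. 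Until that is supplied, the argument remains a heuristic, in line with the paper's own assessment.
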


We end with the most general independence for nearest neighbour correlations in two blocks.

\begin{conjecture} \label{conj:many} 
For any $i_1, \dots, i_r, j_{r+1},\dots, j_{r+s}$, such that for all $r+1\le b\le r+s$ and  $\min_b j_b>1 +\max_a i_a$,
we have the independence
\[
\P(w_{a}=i_a, 1\le a\le r, \text { and } w_{b}=j_b, r+1\le b\le r+s) =
E_{i_1,\dots,i_r}E_{j_{r+1},\dots,j_{r+s}}. 
\]
\end{conjecture}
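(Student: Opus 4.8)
\medskip
\noindent\emph{Proof idea.} The plan is to follow the method of Sections~\ref{S:Two adjacent}--\ref{S:threept}: use the projection principle to turn the statement into a counting identity for Ferrari--Martin multiline queues, and then evaluate the counts by enumerating constrained semistandard (and ``near-semistandard'') Young tableaux. Write $L=\{w_a=i_a,\ 1\le a\le r\}$ and $H=\{w_b=j_b,\ r+1\le b\le r+s\}$, and set $M:=\max_a i_a$; the hypothesis is precisely $\min_b j_b\ge M+2$, so the class $M+1$ is a genuine buffer between the two blocks. Listing $\{i_1,\dots,i_r\}\cup\{j_{r+1},\dots,j_{r+s}\}$ in increasing order as $c_1<\dots<c_{r+s}$, the projection principle lets us compute $\P(L\cap H)$, $\P(L)$ and $\P(H)$ in the single lumped system $\om_m$ with $m=(c_1-1,1,c_2-c_1-1,1,\dots,1,n-c_{r+s})$, in which the buffer class lying strictly between the largest ``$i$'' and the smallest ``$j$'' has size $\min_b j_b-M-1\ge 1$. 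By Theorems~\ref{T:mlqprob} and \ref{T:bullypath}, each of these three probabilities becomes a number of multiline queues in $\ofm_m$ whose bully-path image has the prescribed letters in the first $r+s$ positions, divided by the common total number $|\ofm_m|$; so the claim is the combinatorial identity $N(L\cap H)\cdot|\ofm_m| = N(L)\cdot N(H)$.

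The first step is to describe the multiline queues counted by $N(L\cap H)$. As in the proofs of Theorems~\ref{T:213corr} and \ref{T:231corr}, such a queue has a forced pattern in its first $r+s$ columns together with a bounded list of ``queueing configurations'' recording which occupied sites in those columns are consumed by bully paths entering from the left; each configuration imposes a nested system of inequalities on the positions $z_{a,b}$ of the vacant sites in the remaining columns, which the substitution of Section~\ref{S:Y} converts into an enumeration of SSYT with about $r+s$ columns. The key structural input is that, because the buffer class between the blocks is nonempty, the rows feeding the low values $i_a$ and the rows feeding the high values $j_b$ are insulated: in every queueing configuration the inequality system should split into a sub-system involving only the rows up to level $M$ and the first $r$ columns (responsible for $L$) and a disjoint sub-system involving the lower rows and the columns $r+1,\dots,r+s$ (responsible for $H$), with no inequality bridging the two. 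Reading off the corresponding bijection exhibits $N(L\cap H)$ as a product of a ``low'' factor and a ``high'' factor; matching these with $N(L)$ and $N(H)$ --- using the same SSYT formulas together with the inclusion--exclusion of Section~\ref{S:Two adjacent} to strip off the buffer classes --- then gives the asserted independence.

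I expect the main obstacle to be exactly the point where the two blocks sit at adjacent positions, so that the bully paths for the $i$'s and for the $j$'s queue up together in the first $r+s$ columns: one must then analyse how the queueing configurations interact before the inequalities decouple, and --- as in the open case of Conjecture~\ref{conj:123corr} --- the tableaux that show up are not all SSYT but include ``near-SSYT'' (tableaux that become SSYT after a column is shifted up one step), for which no closed enumeration is known. A complete proof along these lines would therefore also settle Conjecture~\ref{conj:123corr}.

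The more satisfying route, which the $1/n^r$-type formulas really call for, would be a conceptual ``decoupling'' of the Ferrari--Martin multiline queue at height $M$: the top $M$ rows --- themselves a uniform multiline queue for the system with $M$ classes of size one and one class of size $n-M$ --- should govern $L$, while the remaining rows govern $H$, independently. The difficulty is that conditioning on the top $M$ rows does \emph{not} leave an exact $(n-M)$-species TASEP on the positions not yet carrying a small value: the $M$ bully paths descending out of row $M$ act as a nontrivial boundary condition through the set of columns at which they enter row $M+1$, so the required independence only emerges after averaging over that interface. Making this averaging transparent --- perhaps via the queueing description of Ferrari--Martin, or by pushing the determinantal methods announced in \cite{AasL} for the decreasing case of Theorem~\ref{T:decreasing} beyond that case --- seems to me to be the real content of the conjecture.
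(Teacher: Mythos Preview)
This statement is Conjecture~\ref{conj:many} in the paper, and the paper offers \emph{no proof}: it is presented as an open problem, supported only by numerical evidence for $n\le 8$ and by the remark that the techniques of Sections~\ref{S:Two adjacent}--\ref{S:threept} already fail for the special case Conjecture~\ref{conj:123corr}. There is therefore nothing to compare your attempt against.

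Your write-up is not a proof either, and you are candid about this. The approach you outline --- project to a lumped system, translate to a multiline-queue count, and try to factor that count via SSYT enumeration --- is the natural extension of the paper's method, and the obstacle you flag is exactly the one the authors identify: the queueing configurations at the interface between the two blocks produce tableaux that are not SSYT but ``near-SSYT,'' and no closed formula for those is available. You correctly observe that carrying this through would in particular prove Conjecture~\ref{conj:123corr}, which remains open. Your second paragraph's ``decoupling at height $M$'' picture is appealing heuristics, but as you note, conditioning on the top $M$ rows does not yield an independent lower system, so this is a reformulation of the difficulty rather than a way around it.

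In short: your proposal is a reasonable research plan with honestly stated gaps, but it is not a proof, and the paper does not contain one.
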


The reader should compare this to \cite[Corollary 2.4]{AS}, where such independence is proved if $r+s=n$ and 
$\min_b j_b>\max_a i_a$. 
All the above conjectures fit with data for values of $n$ upto 8.

The techniques used to prove the two-point case in Theorem~\ref{T:cji} fail for Conjecture~\ref{conj:123corr} and it is clear that new 
ideas are needed for the conjectures above.
Maybe the key to all these conjectures is to find a conceptual proof of $E_{i,j}=1/n^2$ if $j>i+1$ in \refT{T:cji}. 

\begin{problem}
Find a more conceptual proof of the independence $E_{i,j}=1/n^2$ if $j>i+1$. 
\end{problem}

On the combinatorial side it would be interesting to find a simpler proof of Theorem \ref{T:Y2}.

\begin{problem}
Find a proof of \refT{T:Y2} that explains the occurrence of the Narayana numbers.
\end{problem}


\begin{thebibliography}{ABX}


\bibitem{AasL} Erik Aas and Svante Linusson, Continuous TASEP, in preparation.

\bibitem{AS} Erik Aas and Jonas Sj\"ostrand, A product formula for the TASEP on a ring, preprint 2013, http://arxiv.org/abs/1312.2493.


\bibitem{AAV} 
G. Amir, O.~Angel and B. Valk\'o,  The TASEP speed process, {\it The Annals of Probability}
{\bf 39}, No. 4, 1205--1242 (2011).


\bibitem{AL}
A. Ayyer and S. Linusson,
 An Inhomogeneous Multispecies TASEP on a Ring,
{\em Advances in Applied Math}, {\bf 57}, 21--43 (2014). 
\texttt{arXiv:1206.0316}.


\bibitem{FGM}
Pablo A. Ferrari, Patricia Gonçalves, and James B. Martin, Collision probabilities in the rarefaction fan of asymmetric exclusion processes, 
{\it Ann. Inst. Henri Poincaré Probab. Stat} {\bf 45} no. 4, 1048--1064 (2009).

\bibitem{FM1} P.~A.~Ferrari and J.~B.~Martin,
 Multiclass processes,
 dual points and M/M/1 queues, {\it  Markov Proc.  Rel.  Fields}  {\bf 12},
  175  (2006).

\bibitem{FM2} 
P. A. Ferrari and J. B. Martin,
Stationary distributions of multi-type totally asymmetric exclusion processes,
{\it Ann. Prob.}  \textbf{35}, 807 (2007). 

\bibitem{Joh} Kurt Johansson, Shape fluctuations and random matrices, {\it Comm. Math. Phys.} {\bf 209}, 437--476  (2000). 

\bibitem{lam} Thomas Lam, 
The shape of a random affine Weyl group element, and random core partitions, {\it preprint}
\texttt{arXiv:1102.4405}

\bibitem{llms} Thomas Lam, Luc Lapointe, Jennifer Morse and Mark Shimozono, Affine insertion and Pieri rules for the affine Grassmannian,
{\em Mem. Amer. Math. Soc.} {\bf 208}  no. 977, xii+82 pp (2010).


\bibitem{LPW} David A. Levin, Yuval Peres,  and Elizabeth L. Wilmer, 
Markov chains and mixing times,
{\em American Mathematical Society}, Providence, RI (2009).

 \bibitem{LM} Svante Linusson and James Martin, {Stationary probabilities for an inhomogeneous multi-type
TASEP}, in preparation.


\bibitem{MG} Thomas Mountford and Herv\'e Guiol, The motion of a second class particle for the TASEP starting from a decreasing shock profile. 
{\it The Annals of Applied Probability} {\bf 15} no. 2, 1227--1259, (2005).

\bibitem{Rost} H. Rost, Nonequilibrium behaviour of a many particle process: Density pro- file and local equilibria, 
{\it Z. Wahrsch. Verw. Gebiete},  {\bf 58} 41--53 (1981).


\bibitem{EC2} Richard P. Stanley, {\em Enumerative Combinatorics, vol 2}, Cambridge Univ. Press.
\end{thebibliography}
\end{document}